\documentclass[11pt]{amsart}
\usepackage{amssymb,amsmath,amsthm}
\usepackage[hmargin=3cm,vmargin=3.5cm]{geometry}

\textwidth=6in \topmargin= 0.5in \textheight=8in
\usepackage{graphicx}
\usepackage{amscd}
%---rad
\usepackage[all, knot]{xy}
%---rad
\usepackage{epsfig}
\usepackage{psfrag}
\usepackage{amssymb,amsmath,amsthm}
\usepackage{amsfonts}
\usepackage{amssymb}

\newtheorem{theorem}{Theorem}[section]
\newtheorem{lemma}[theorem]{Lemma}
\newtheorem{remark}[theorem]{Remark}
\newtheorem{definition}[theorem]{Definition}

\newtheorem{proposition}[theorem]{Proposition}
\newtheorem{prop}[theorem]{Proposition}
\newtheorem{cor}[theorem]{Corollary}

\newcommand\Z{{\mathbb{Z}}}

\newcommand{\oplusop}[1]{{\mathop{\oplus}\limits_{#1}}}
%$$\oplusop{i\in \Z}M_i$$
\newcommand{\oplusoop}[2]{{\mathop{\oplus}\limits_{#1}^{#2}}}
%$$\oplusoop{i=1}{n} M^i $$

\def\sbinom#1#2{\left( \hspace{-0.06in}\begin{array}{c} #1 \\ #2
\end{array}\hspace{-0.06in} \right)}
\def\fieldk{\mathbf{k}}
\def\dmod{\mathrm{-mod}}
\def\pmod{\mathrm{-pmod}}
\def\define{\stackrel{\mathrm{def}}{=}}
\def\lra{\longrightarrow}
\def\ra{\rightarrow}
\def\Hom{\mathrm{Hom}}

\def\mc{\mathcal}

\title{Categorifications of the polynomial ring}

\author{Mikhail Khovanov, Radmila Sazdanovi\'c\\ \, \\
January 1, 2011}

\begin{document}

\maketitle

\textbf{Abstract:} We develop a diagrammatic categorification of the polynomial ring $\Z[x]$. Our categorification satisfies a version of Bernstein-Gelfand-Gelfand reciprocity property with the indecomposable projective modules corresponding to $x^n$ and standard modules to $(x-1)^n$ in the Grothendieck ring.
\tableofcontents

%%%%%%%%%%%%%%%%%%%%
%%
%%   INTRODUCTION
%%
%%%%%%%%%%%%%%%%%%%%%

\section{Introduction}

Inspired by the general idea of categorification, introduced by L.~Crane and I.~Frenkel,
we construct a categorification of the polynomial ring $\Z[x]$, more precisely of polynomials $(x-1)^n$ that can be generalized
to orthogonal one-variable polynomials, including Chebyshev polynomials of the
second kind and the Hermite polynomials \cite{KS}.

In this paper, we interpret the ring $\Z[x]$ as the Grothendieck ring of a suitable additive monoidal category $A^-\pmod$ of
(finitely generated) projective modules over an idempotented geometrically defined ring $A^-.$ Monomials $x^n$
 become indecomposable projective modules $P_n$, while polynomials $(x-1)^m$ turn into so-called standard modules
  $M_m.$ Ring  $A^-$ has one more distinguished family of modules - simple modules $L_n.$ A remarkable feature of these
  three collections of modules is the Bernstein--Gelfand--Gelfand (or BGG) reciprocity property \cite{BGG}. Projective modules $P_n$
   have a filtration by standard modules $M_m$, for $m \leq n$, and the multiplicities satisfy the relation:
   $$[P_n:M_m]=[M_m:L_n].$$

Original examples of algebras and modules with this property are due to J.~Bernstein, I.~Gelfand, and S.~Gelfand  and come up
in infinite-dimensional representation theory of simple Lie algebras. The algebra $A^-$ has a purely
topological-geometric definition, yet satisfies the BGG property. Moreover, the standard modules $M_n$ have a clear
geometric interpretation. An additional sophistication  appears due to non-unitality of algebras $A^-.$ Instead,
they contain an infinite collection of idempotents $1_n,$ $n \geq 0,$ serving as a substitute for the unit
element $1.$ Projectives $P_n$ and standard modules $M_n$ are infinite-dimensional, and the multiplicity $[M_m:L_n]$
 should be understood in the generalized sense, as dim$(1_nM_m).$ We hope that our approach will lead to
 geometric interpretation of the BGG reciprocity in many other cases, including the ones
 considered by  J.~Bernstein, I.~Gelfand, and S.~Gelfand.
 In the sequel \cite{KS} we will generalize this constructions to categorify the Hermite and Chebyshev polynomials.

%%%%%%%%%%%%%%%%%%%%%%%%%%%%%%%%%%%%%%%%%%%%%%%%%%%%%%%%%%%%%%%%%%%%%%%%%%%%%%%%%%%%%%%%%%%%%%%%%%%%%%%%%%

%%%%%%%%%%%%%%%%%%%%%
%%
%%     SLARCS
%%
%%%%%%%%%%%%%%%%%%%%%%%

\section{The algebra of slarcs and what it categorifies}

Denote by $_m B^-_n$ the set of isotopy classes of planar diagrams
(see Figure  ~\ref{dang1}) which connect $k$ out  of $m$ points on
the line $x=0$ to $k$ out of $n$ points on the line $x=1$ by $k$
arcs called larcs (long arcs), $k\le \min(n,m).$ The remaining
$m-k$ left and $n-k$ right points extend to \emph{short arcs} or
\emph{sarcs}, with one endpoint on either line $x=0$ or $x=1$
and the other in the interior of the strip $0<x<1.$ We require
that the projection of the resulting 1-manifold onto the $x$-axis
has no critical points. The number of larcs $k$ is called the
\emph{width} of the diagram. Let $_m B^-_n(k)$ and $_mB^-_n(\leq k)$
denote the subsets of diagrams in  $_m B^-_n$ of
width $k$ and less than or equal to $k$, respectively.
%In
%general, if an $A^-$-module $N$ is a bimodule, it has a natural
%basis given by a subset of $B^-$. By $N( \leq k)$ we denote the
%subspace of $N$ spanned by diagrams in $B^-(\leq k).$

\begin{figure} \scalebox{1.1}{\includegraphics{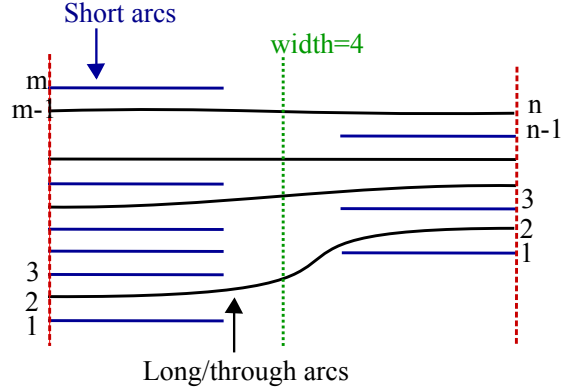}}
\caption{A diagram in $_m B^-_n$.}
\label{dang1}
\end{figure}

The set $_m B^-_n$ has cardinality
$$\sum_{k=0}^{\min(n,m)} \sbinom{n}{k}\sbinom{m}{k} = \sbinom{n+m}{n}.$$

Let
$$ B^- \define \displaystyle{\bigsqcup_{n,m\ge 0}  \hspace{0.02in} _m B^-_n} $$
and
$$ B^-_n \define \displaystyle{\bigsqcup_{n\ge 0}  \hspace{0.02in} _m B^-_n}. $$

Given a field $\fieldk$, form $\fieldk$-algebra $A^-$ as a vector space with the basis $B^-$ and
the multiplication generated by the concatenation of elements of
$B^-$. The product is zero if the resulting
diagram has an arc which is not attached to the lines $x=0$ or
$x=1$, called \emph{floating} arc, Figure  ~\ref{dang2}. Also,
if $y \in {}_m B^-_n$, $z\in {}_k B^-_l$ and $n\not= k$, then the
concatenation is not defined and we set $yz=0.$ Thus, for any two
elements $y,z$ of $B^-$ the product $yz$ is either $0$ or an
element of $B^-$.

\begin{figure} \scalebox{0.9}{\includegraphics{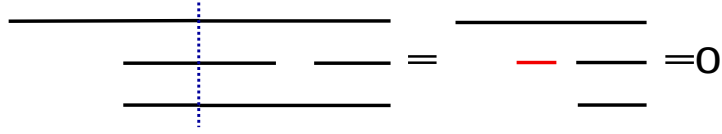}}
\caption{Concatenation of these two diagrams equals zero since the
resulting diagram contains a floating arc.}
\label{dang2}
\end{figure}

\begin{remark}
Alternatively, we can avoid drawing sarcs, and instead draw just
their endpoints on the vertical lines $x=0,1.$ Then the product of
two diagrams is zero if the composition has an isolated point in
the middle of the diagram.
\end{remark}
The composition induces an associative $\fieldk$-algebra structure on $A^-$.
For each $n$ there exists a unique diagram in $_nB^-_n$ without
sarcs. We denote this diagram and its image in $A^-$ by $1_n.$
These elements are minimal idempotents in $A^-.$

We have
\begin{equation*}A^-=\displaystyle{\bigoplus_{n,m\ge 0}\hspace{0.02in} _n A^-_m},
\end{equation*}
where $_n A^-_m$ is the vector space with the basis $_n B^-_m$.
$A^-$ is a non-unital associative algebra with a system of mutually orthogonal
idempotents $\{ 1_n\}_{n\ge 0}.$ We consider left modules $M$ over
$A^-$ with the property
$$ M =\displaystyle{ \oplusop{n\ge 0} 1_n M}. $$
This property is analogous to the unitality condition $1M = M$ for modules over
a unital algebra. For a module $M$, we write  $M^m$ for the direct sum of $m$
copies of $M$.

Let $P_n=A^- 1_n$ be the projective $A^-$-module $P_n$  with a basis consisting of all
diagrams in $B_n^-$. Define $M_n$, called the \emph{standard module}, as the quotient
of $P_n$ by the submodule spanned by all diagrams which have right sarcs.
Therefore, a basis of $M_n$ is the set of diagrams in $B^-_n$ with no right sarcs. In particular,
if $1_mM_n\not= 0$ then $m\ge n.$ Notice that $b \cdot a =0$ for any $a \in M_n$
and a diagram $b \in B^-$ with at least one right sarc, Figure  ~\ref{SLARCSonMn}.

\begin{figure}[h]
\begin{center}
\scalebox{.85}{
\includegraphics{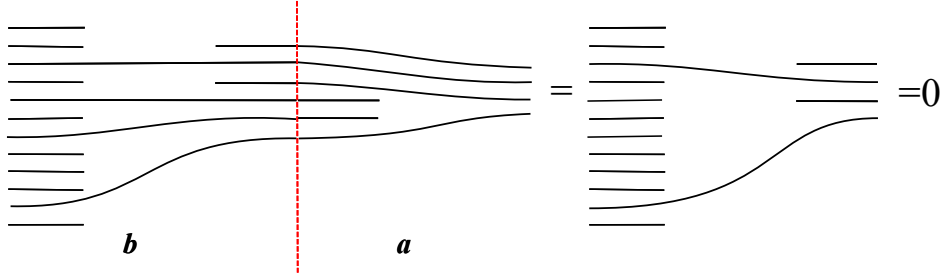}}
\end{center}
\caption{For any diagram $a$ representing an element of a standard
module and a diagram $b \in B^-$ with right sarcs the product $b
\cdot a=0$.}
 \label{SLARCSonMn}
\end{figure}

A left $A^-$--module $M$ is called finitely--generated if for some finite subset
$\{m_1,m_2, \dots, m_k \}$  of $M$ we have $M = A^-m_1 + \dots +A^-m_k$.
$M$ is finitely generated if and only if it is a quotient of
 $\displaystyle{\bigoplus_{n=0}^N} P_n^{a_n}$ for some $a_n
 \geq 0$, $N \in \mathbb{N}.$

Let $A^-\dmod$ be the category of finitely-generated left $A^-$-modules and $A^-\pmod$ the
category of finitely-generated projective left $A^-$-modules.

\begin{prop}
The hom space $\Hom_{A^-}(M',M'')$ is a finite-dimensional
$\fieldk$-vector space for any $M',M'' \in A^-\dmod.$
\end{prop}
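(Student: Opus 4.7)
The plan is to reduce the statement to the natural identification $\Hom_{A^-}(P_n,M) \cong 1_n M$ and then exploit finite generation on each side.

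First, for any module $M$ satisfying $M = \oplusop{k\ge 0} 1_k M$, I would observe that a homomorphism $\varphi \colon P_n = A^- 1_n \to M$ is determined by the image $\varphi(1_n)$, which must lie in $1_n M$ because $1_n$ is idempotent; conversely, any element of $1_n M$ extends uniquely to such a map. This yields the natural isomorphism
$$\Hom_{A^-}(P_n,M) \cong 1_n M.$$
Combined with the fact, already present in the text, that $1_m P_n = {}_m A^-_m$ has basis ${}_m B^-_n$ of cardinality $\binom{n+m}{n}$, this is the finite-dimensionality input that drives the rest.

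Next, I would verify that $1_m M''$ is finite-dimensional for every $m$ whenever $M''$ is finitely generated. Writing $M''$ as a quotient of some $Q = \bigoplus_{n=0}^N P_n^{a_n}$ (as explained in the paragraph preceding the proposition) and applying the exact functor $1_m(-)$, the space $1_m M''$ becomes a quotient of $1_m Q = \bigoplus_{n=0}^N (1_m P_n)^{a_n}$, which is a finite direct sum of finite-dimensional $\fieldk$-vector spaces.

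Finally, to handle a general finitely generated $M'$, I would choose a surjection $Q' \twoheadrightarrow M'$ with $Q' = \bigoplus_i P_{n_i}$ a finite direct sum of projectives. Precomposition gives an injection
$$\Hom_{A^-}(M',M'') \hookrightarrow \Hom_{A^-}(Q',M'') \cong \bigoplus_i 1_{n_i} M'',$$
and the right-hand side is a finite sum of finite-dimensional spaces by the previous step, completing the argument. The only delicate point is that the decomposition $M = \oplusop{k\ge 0} 1_k M$ is inherited by quotients, but this is immediate from the mutual orthogonality of the idempotents $\{1_n\}$. I do not foresee a serious obstacle; the proof is essentially the standard projective-Hom adjunction adapted to the non-unital setting, with the combinatorial bound $|{}_m B^-_n| = \binom{n+m}{n}$ providing the finite-dimensionality.
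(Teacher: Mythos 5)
Your proof is correct and follows essentially the same route as the paper: reduce to $M' = P_n$ via a surjection $Q' \twoheadrightarrow M'$ from a finite sum of projectives, use the identification $\Hom(P_n, M'') \cong 1_n M''$, and conclude from the fact that $1_n P_m$ is finite-dimensional when $M''$ is presented as a quotient of a finite sum of $P_m$'s. You have merely spelled out the reduction to the projective case (which the paper states without elaboration) and checked that the idempotent decomposition passes to quotients.
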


\begin{proof} It is sufficient to consider the case $M'=P_n.$ We have
$\Hom(P_n,M'') = 1_n M''.$ But $1_nM''$ is finite-dimensional, since $M''$
is a quotient of finite direct sum of $P_m$'s and, $1_n P_m$ is
finite-dimensional.
\end{proof}
\iffalse
Let $A$ be a $\fieldk$-algebra (perhaps with a system of idempotents
as a substitute for the unit element) such that the
conditions of the above proposition hold for $A$ (in place of
$A^-$). Then the category $A$-mod of finitely-generated
$A$-modules is Krull-Schmidt.
\fi
% even if $A$ is not Noetherian, which
%is possible if $A$ does not have the unit element).

\begin{cor} The category $A^-\dmod$ is Krull-Schmidt.\end{cor}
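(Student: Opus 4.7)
The plan is to deduce the corollary from the preceding proposition via the standard criterion: a $\fieldk$-linear additive category in which every object has a finite-dimensional endomorphism algebra and in which idempotents split is Krull-Schmidt. I would verify these two hypotheses for $A^-\dmod$ in turn.

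First, by the preceding proposition applied with $M'=M''=M$, every object $M\in A^-\dmod$ has a finite-dimensional, hence Artinian, endomorphism algebra $\End_{A^-}(M)$. This already rules out infinite orthogonal families of nonzero idempotents in $\End_{A^-}(M)$, so any chain of direct-sum decompositions of $M$ must terminate after finitely many steps.

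Second, I would verify that idempotents split. Given $M\in A^-\dmod$ and $e\in\End_{A^-}(M)$ with $e^2=e$, the formula $M=eM\oplus(1-e)M$ holds in the ambient abelian category of all left $A^-$-modules satisfying $M=\oplusop{n\ge 0}1_n M$. Since $eM$ and $(1-e)M$ are simultaneously quotients and submodules of the finitely generated module $M$, they are themselves finitely generated, so both summands lie in $A^-\dmod$ and idempotents split there.

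With these two facts in hand, I would conclude by a standard argument: choosing a decomposition of $1\in\End_{A^-}(M)$ into a sum of primitive orthogonal idempotents (which exists and is finite because $\End_{A^-}(M)$ is finite-dimensional over $\fieldk$) yields a decomposition $M=M_1\oplus\cdots\oplus M_r$ into indecomposable summands. For each $M_i$, the algebra $\End_{A^-}(M_i)$ is a finite-dimensional $\fieldk$-algebra with no nontrivial idempotents, hence local. Uniqueness of the decomposition up to permutation and isomorphism then follows from Azumaya's theorem for objects with local endomorphism rings.

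The only nontrivial step is the verification that idempotents split, and even that is routine once one notes that finite generation passes to direct summands; the main input is really the finite-dimensionality of $\Hom$ spaces established in the previous proposition.
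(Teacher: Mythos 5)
The paper states this corollary without proof, treating it as a standard consequence of the finite-dimensionality of Hom spaces established in the preceding proposition. Your proposal correctly fills in exactly that standard argument: finite-dimensional endomorphism algebras together with idempotent splitting (which you verify via the ambient abelian category of all $A^-$-modules, noting that a direct summand is a quotient and hence inherits finite generation) give the Krull--Schmidt property by the usual primitive-idempotent decomposition and Azumaya uniqueness. This matches the intended reasoning, and the verification that idempotents split in $A^-\dmod$ is the one genuinely needed observation that the paper leaves implicit.
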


Let $L_n=\fieldk 1_n$ be the one-dimensional module over $A^-$ on which any
element of $B^-$ other than $1_n$ acts by zero.

\begin{lemma}\label{SLARCSimpleLn}
 Any simple $A^-$-module is isomorphic to $L_n$ for some $n \leq 0$.
\end{lemma}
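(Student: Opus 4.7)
The plan is to reduce to showing that the projective $P_n = A^- 1_n$ has a unique simple quotient, namely $L_n$. Given a simple $A^-$-module $M$, the decomposition $M = \bigoplus_{n\ge 0} 1_n M$ forces some $n\ge 0$ and a nonzero $w\in 1_n M$, and simplicity gives $A^- w = M$, so $M$ is a cyclic quotient of $P_n$ via $a\mapsto aw$. I claim that the subspace $N\subset P_n$ spanned by all basis diagrams in $B^-_n$ other than $1_n$ is the unique maximal submodule of $P_n$, from which $M\cong P_n/N\cong L_n$ follows at once.

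To see that $N$ is a submodule, the key input is the width arithmetic for composition: for basis diagrams $a\in {}_{m'}B^-_m$ and $b\in {}_m B^-_n$ of widths $k_a,k_b$, the product $ab$ either vanishes (when sarc positions on the $a|b$ interface overlap) or is a basis diagram of width $k_a+k_b-m$. Since $1_n$ is the unique width-$n$ diagram in ${}_n B^-_n$, a short case check on $m$ versus $n$ shows that $ad\ne 1_n$ for any basis $a\in A^-$ and any $d\in B^-_n\setminus\{1_n\}$, giving $A^-N\subseteq N$.

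The decisive step is to show that every submodule $K\subsetneq P_n$ lies in $N$. Given $x\in K\setminus N$, write $x = c\cdot 1_n + y$ with $c\ne 0$ and $y\in N$; multiplying by $1_n$ kills the components of $y$ in ${}_m A^-_n$ for $m\ne n$, so $x' := \frac{1}{c}\cdot 1_n x \in K$ takes the form $1_n + j$ with $j$ in the two-sided ideal $J\subset {}_n A^-_n$ spanned by the width-$<n$ basis diagrams. The crux is the nilpotency $J^{n+1}=0$: iterating the width formula, any nonzero product of $n+1$ basis diagrams in $J$ (each of width $\le n-1$) would have width $\le n-(n+1)<0$, which is absurd. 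Consequently $u := \sum_{i=0}^{n} (-j)^i \in {}_n A^-_n$ is a left inverse of $1_n + j$, so $ux' = 1_n\in K$ and $K = P_n$, a contradiction.

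This establishes $N$ as the unique maximal submodule of $P_n$. Since $P_n/N$ is one-dimensional with $1_n$ acting as the identity and every other basis diagram of $A^-$ acting as zero, $P_n/N\cong L_n$ by the very definition of $L_n$, and hence $M\cong L_n$. The main obstacle is the width arithmetic underpinning both claims — particularly the identity $\mathrm{width}(ab) = k_a+k_b-m$ and its consequence $J^{n+1}=0$; everything else is routine bookkeeping with the idempotent decomposition.
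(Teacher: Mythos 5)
Your proof is correct, but it takes a genuinely different route from the paper's. The paper introduces the two-sided ideal $I \subset A^-$ spanned by all diagrams with at least one left sarc, proves $1_n I^{n+1}=0$, deduces $IL=0$ for any simple $L$ (since $IL=L$ would force $1_nL=0$ for all $n$), and then classifies simples over the quotient $A^-/I$, which is \emph{directed} in the sense that $1_n(A^-/I)1_m=0$ for $n>m$ and $1_n(A^-/I)1_n\cong\fieldk$; the classification then drops out immediately. You instead fix $n$ with $1_nM\neq 0$, realize $M$ as a cyclic quotient of $P_n$, and prove directly that the span $N$ of non-identity basis diagrams is the unique maximal submodule of $P_n$, inverting $1_n+j$ via the nilpotency $J^{n+1}=0$ where $J=1_n I 1_n\subset{}_nA^-_n$. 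Both arguments rest on the same width-counting nilpotency, but you apply it inside the corner ring ${}_nA^-_n$ rather than globally; this is essentially the locality of $\End_{A^-}(P_n)\cong{}_nA^-_n$, which the paper establishes separately in the proof of Theorem~\ref{SLARCProjModDec} ($J^N=0$ for $N$ large) to get indecomposability of $P_n$. So your route is slightly longer for the lemma at hand but yields, as a by-product, that $P_n$ is the projective cover of $L_n$ with $\mathrm{rad}(P_n)=N$, and you make the nilpotency bound $J^{n+1}=0$ explicit (the paper only asserts $J^N=0$ for some $N$). The paper's quotient-algebra argument is more economical if classifying simples is the only goal. One small point to spell out for a polished write-up: the ``short case check'' that $ad\neq 1_n$ for $a\in{}_{n}B^-_m$, $d\in{}_mB^-_n\setminus\{1_n\}$ should record that width$(ad)=k_a+k_d-m=n$ together with $k_a\leq\min(n,m)$ and $k_d\leq\min(m,n)$ forces $k_a=k_d=m=n$, hence $d=1_n$; this relies on $1_n$ being the \emph{only} width-$n$ diagram in ${}_nB^-_n$, a consequence of the no-critical-points condition forcing the larcs to be non-crossing and monotone.
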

\begin{proof}
Let $L$ be a simple $A^-$-module and $I$ the $2$-sided ideal in $A^-$ spanned by all diagrams
with at least one left sarc. Notice that $1_nI^{n+1}=0$ for all $n \geq 0$. Since  $IL$ is a submodule
of $L$, then either $IL=L$ or $IL=0$. If $IL=L$ then $I^mL=L$ for every $m$ and $0=1_nI^{n+1}L=1_nL$ for all $n$, a contradiction.
Hence $IL=0$ and every simple module $L$ is actually an $A^-/I$-module. The algebra $A^-/I$ is
directed, in the sense that
\begin{eqnarray*}
  1_n(A^-/I) 1_m &=& 0 \rm{\,\,\,\, if \,} n>m,\\
 1_n(A^-/I) 1_n &\cong & \fieldk .
\end{eqnarray*}
Hence, $\oplusop{k \leq n} 1_kL$ is a submodule of $L$ for every $n$. With $L$ being simple,  $1_nL=L$ for some $n$, and $L$ is one-dimensional, isomorphic to $L_n$. \end{proof}

\begin{theorem} \label{SLARCProjModDec}
Any finitely--generated projective left  $A^-$-module $P$ is isomorphic to a finite
direct sum of indecomposable projective modules $P_n$,
$$ P \cong  \oplusoop {n=0}{N}P_n^{a_n}.$$
The multiplicities $a_n \in \Z_+$ are invariants of $P$.
\end{theorem}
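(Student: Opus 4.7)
The plan is to establish two facts: each $P_n$ is indecomposable, and the $P_n$ are pairwise non-isomorphic. Combined with the Krull--Schmidt property of $A^-\dmod$ (preceding corollary), these yield both the decomposition and the invariance of multiplicities.

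The standard isomorphism $\Hom_{A^-}(P_n, M) \cong 1_n M$, sending $\phi \mapsto \phi(1_n)$, specializes to $\Hom_{A^-}(P_n, L_k) \cong 1_n L_k$; this is $\fieldk$ when $k=n$ and $0$ otherwise. It follows at once that $P_n \not\cong P_m$ for $n \neq m$, and that $L_n$ is the unique simple quotient of $P_n$ up to isomorphism and scalar.

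To show indecomposability of $P_n$, suppose for contradiction $P_n = Q_1 \oplus Q_2$ with both summands nonzero. Let $\pi_i \colon P_n \to Q_i$ be the projections and put $e_i := \pi_i(1_n)$; then $1_n = e_1+e_2$, each $e_i$ is a nonzero idempotent in $1_n A^- 1_n$, and $Q_i = \pi_i(A^- 1_n) = A^- e_i$ is a nonzero cyclic module. A Zorn's lemma argument on the poset of submodules of $Q_i$ not containing $e_i$ (the union of such a chain remains proper since it cannot contain $e_i$) produces a maximal proper submodule, hence a simple quotient $Q_i \twoheadrightarrow S_i$. The decomposition property $M = \bigoplus_k 1_k M$ is inherited by $Q_i \subset P_n$ and by its quotients, so Lemma \ref{SLARCSimpleLn} identifies $S_i \cong L_{k_i}$ for some $k_i$. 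Composing with $\pi_i$ gives a nonzero map $P_n \to L_{k_i}$, which forces $k_i = n$ for both $i = 1, 2$. This produces two linearly independent elements of $\Hom_{A^-}(P_n, L_n)$, contradicting the $1$-dimensionality above.

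Any f.g.\ $A^-$-module is by hypothesis a quotient of some $\bigoplus_{n=0}^N P_n^{a_n}$, and if it is also projective this surjection splits, exhibiting $P$ as a direct summand of a finite direct sum of $P_n$'s; Krull--Schmidt then gives $P \cong \bigoplus_{n=0}^N P_n^{a_n}$ with uniquely determined multiplicities $a_n$. As an alternative check, $a_n = \dim_\fieldk \Hom_{A^-}(P, L_n)$, which is manifestly an invariant. The main subtlety I anticipate is the Zorn step in the non-unital setting, but this reduces to verifying that the $1_k$-decomposition property descends to $Q_i$ and its quotients, which is immediate.
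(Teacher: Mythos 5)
Your proof is correct and takes a genuinely different route from the paper's. The paper proves indecomposability of $P_n$ by exhibiting $\End_{A^-}(P_n)\cong 1_nA^-1_n$ as a local ring: the diagrams in ${}_nB^-_n$ other than $1_n$ span a nilpotent two-sided ideal $J$, so $J$ is the Jacobson radical and $R/J\cong\fieldk$. It then constructs the decomposition by hand, peeling off one $P_n$ at a time: given a maximal proper submodule $Q\subset P$, lift $P\twoheadrightarrow L_n\twoheadleftarrow P_n$ to $\alpha:P\to P_n$ and $\beta:P_n\to P$, observe $1-\alpha\beta$ lies in the radical of $\End(P_n)$ (hence is nilpotent), correct $\beta$ to a section, split off $P_n$, and induct. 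You instead count simple quotients: $\Hom(P_n,L_k)\cong 1_nL_k$ is one-dimensional iff $k=n$, so a nontrivial splitting $P_n=Q_1\oplus Q_2$ would produce two linearly independent maps $P_n\to L_n$ — a contradiction — and you then invoke Krull--Schmidt directly to decompose any direct summand of $\bigoplus_n P_n^{a_n}$. Your version is slicker but leans more heavily on the ambient Krull--Schmidt machinery (in particular on the exchange property); the paper's version is more self-contained and, as a bonus, identifies $\End(P_n)$ as local, a fact it reuses in the lifting argument. Your closing observation that $a_n=\dim_\fieldk\Hom_{A^-}(P,L_n)$ is a clean independent verification of invariance that the paper does not state explicitly. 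The Zorn's-lemma step you flag is handled correctly: maximality among submodules missing $e_i$ does give a maximal submodule of $Q_i=A^-e_i$ since any strictly larger submodule contains $e_i$ and hence all of $Q_i$, and the $\bigoplus_k 1_kM$ decomposition does pass to submodules and quotients so Lemma \ref{SLARCSimpleLn} applies.
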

\begin{proof}
 The module $P_n$ is indecomposable, since its
endomorphism ring $R=\Hom_{A^-}(P_n,P_n)$ is local. Indeed, the
diagrams in ${}_n B_n$ other than $1_n$ span a 2-sided ideal
$J$ in $R$ and $J^N=0$ for $N$ sufficiently large.
Therefore $J$ is the radical of $R$,
$R/J\cong \fieldk$, %is the unique simple $R$-module
 and $R$ is local.

Take a finitely--generated projective $A^-$-module $P$ and any maximal proper submodule $Q$. The
simple module $P/Q$ is isomorphic to $L_n$, for some $n$. Surjections
$$ P \stackrel{p_1}\lra L_n \stackrel{p_2}\longleftarrow P_n $$
lift to homomorphisms $P \stackrel{\alpha}\ra P_n \stackrel{\beta}\ra P.$
\[
\xy (0,0)*{P}="A"; (20,20)*{P_n}="B"; (20,0)*{L_n}="C";
{\ar@{->>}^{p_2} (20,20)*{P_n}; (20,0)*{L_n}};
 {\ar@{->>}^{p_1} (0,0)*{P};(20,0)*{L_n}};
 {\ar^{\beta}(20,20)*{P_n};(0,0)*{P}};
 {\ar@/^1pc/^{\alpha} (0,0)*{P};(20,20)*{P_n}};
\endxy
\]
Notice that $p_1 \beta \alpha=p_1$ and $p_2
\alpha \beta =p_2$ which gives $p_2(\alpha \beta -1)=0$. Hence $1-
\alpha \beta \in J(End(P_n)),$ the Jacobson radical of the endomorphism ring,
and there exist an integer $N$ such
that $(1-\alpha \beta)^N=0.$ Thus, there exist an endomorphism $\delta$  of $P_n$
 such that
$1-\alpha \beta \delta=0.$ Hence for $\beta'=\beta \delta$ we get $\alpha \beta'=1$ which means
$$P \cong Im \beta \oplus Ker \alpha \cong P_n \oplus Ker
\alpha'$$ i.e. that $P_n$ is direct summand of $P$. Proceeding by induction, we get
 $P \cong \oplusoop {n=0}{N}P_n^{a_n}$.
The Krull--Schmidt property implies that multiplicities $a_n$ are invariants of $P.$
\end{proof}

The projective module $P_n$ has a filtration by standard modules
$M_m$, over $m\le n.$ Specifically, consider the filtration
\begin{equation}
   P_n = P_n(\leq n)\supset P_n( \leq n-1) \supset \dots \supset P_n( \leq 0)=0,
   \label{SLArcsFilterPn}
\end{equation}

\noindent where $P_n( \leq m)$ is spanned by the diagrams in $B^-_n$ of
width at most $m$ (equivalently, with at least $n-m$ right sarcs).
Left multiplication by a basis vector cannot  increase the width,
hence $P_n( \leq m)$ is a submodule of $P_n$.
 The quotient $P_n( \leq m)/P_n(\leq m-1)$ has a basis of diagrams of
 width exactly $m.$ These
diagrams can be partitioned into $\sbinom{n}{m}$ classes
enumerated by positions of the $n-m$ right sarcs. The
quotient $P_n( \leq m)/P_n(\leq m-1)$ is isomorphic to the
direct sum of $\sbinom{n}{m}$ copies of the standard module $M_m.$
Consequently, we have an equality in the Grothendieck group
of the additive category $A^- \dmod$:
\begin{equation}
[P_n]= \sum_{m=0}^n \sbinom{n}{m} [M_m]. \label{SARCSPnMminG}
\end{equation}

Next, we prove that the non-unital algebra $A^-$ is Noetherian, hence the category $A^- \dmod$ is abelian.

\begin{prop} \label{SLARCSNoetherian}
A submodule of a finitely-generated left $A^-$-module is
finitely-generated.
\end{prop}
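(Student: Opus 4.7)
The plan is to reduce the claim to the Noetherianity of each standard module $M_m$ and then prove that via a Gröbner-basis argument backed by Dickson's lemma. For the reduction: every finitely-generated $A^-$-module is a quotient of some $\bigoplus_{n=0}^{N} P_n^{a_n}$, and both quotients and finite direct sums of Noetherian modules are Noetherian, so it suffices to show each $P_n$ is Noetherian. For this I would invoke the finite filtration~(\ref{SLArcsFilterPn}) of $P_n$, whose successive quotients are $\binom{n}{m}$ copies of $M_m$; since Noetherianity is closed under extensions, the problem reduces to showing each $M_m$ is Noetherian.

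For the main step, parameterize the basis of $M_m$ by $m$-subsets $S\subseteq [j]$ ($j\ge m$) and encode each $S$ by its \emph{gap sequence} $\gamma(S)=(g_1,\ldots,g_{m+1})\in \Z_{\ge 0}^{m+1}$, where $g_i$ counts the positions of $[j]\setminus S$ strictly between the $(i{-}1)$-st and $i$-th elements of $S$ (with the usual boundary conventions at the ends). This identifies the basis of $M_m$ with $\Z_{\ge 0}^{m+1}$, with $1_j M_m$ corresponding to sequences of total $j-m$. A direct computation shows that for any diagram $b\in {}_{j'}B^-_j$ of maximal width $j$ (diagrams of smaller width annihilate $M_m$), the induced map $V_j\to V_{j'}$ is a bijection on basis vectors that raises gap sequences componentwise and is strictly monotone in the lexicographic order within each grading.

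Given a submodule $N\subseteq M_m$, let $L(N)\subseteq\Z_{\ge 0}^{m+1}$ be the set of lex-leading gap sequences of non-zero elements of $N$. The monotonicity above forces $L(N)$ to be an up-set in the componentwise partial order, so by Dickson's lemma it has finitely many minimal elements $\gamma^{(1)},\ldots,\gamma^{(K)}$. Choose representatives $v^{(k)}\in N$ with leading gap sequence $\gamma^{(k)}$. A standard Gröbner reduction then works: if $v\in N$ has leading gap sequence $\gamma\ge\gamma^{(k)}$ componentwise, some diagram $b$ realizes this shift, and subtracting an appropriate scalar multiple of $b\cdot v^{(k)}$ from $v$ strictly lowers its leading gap sequence. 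Because lex on $\Z_{\ge 0}^{m+1}$ is a well-order, the process terminates and yields $N = A^-\{v^{(1)},\ldots,v^{(K)}\}$, so $N$ is finitely generated.

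The main technical obstacle is the compatibility check in the second paragraph: using the explicit formula for the shifted gap sequence in terms of $\gamma$ and the gap sequence of $I$, one must verify both that the action of $b$ is strictly lex-monotone on each fixed grading (so leading terms are preserved by the $A^-$-action) and that every componentwise increase $\gamma\le\gamma'$ is realized by at least one diagram $b$ (so the Gröbner cancellation step is legal). Both are elementary but careful exercises in the combinatorics of gap sequences.
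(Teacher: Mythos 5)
Your proof is correct, but it takes a genuinely different route from the paper's. Both proofs make the identical reduction: finitely generated modules are quotients of finite sums of $P_n$'s, and $P_n$ has a finite filtration with standard subquotients, so it suffices to show each $M_m$ is Noetherian. Both also label basis diagrams of $M_m$ by the same $(m{+}1)$-tuple of non-negative integers recording the number of left sarcs in each gap between consecutive larcs and at the two ends (the paper writes this as $(a_1,\ldots,a_{m+1})$; you call it a gap sequence). The divergence is in how that combinatorial labeling is exploited. The paper isolates the last coordinate $a_{m+1}$ as a ``degree,'' defines a truncation map $\overline{\,\cdot\,}\colon M_m \to M_{m-1}$ (delete the top larc and everything above it) and a shift $d \mapsto d^{[p]}$, and runs an induction on $m$ that mirrors the classical $\fieldk[x_1,\ldots,x_m] = \fieldk[x_1,\ldots,x_{m-1}][x_m]$ proof of the Hilbert basis theorem: pick elements of the submodule of minimal degree, push down to $M_{m-1}$, invoke the inductive hypothesis, and derive a contradiction from degree reduction. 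Your argument bypasses the induction entirely by treating all $m{+}1$ coordinates at once, showing that left multiplication by a full-width diagram raises gap sequences componentwise and is strictly lex-monotone, so that the set of leading exponents of a submodule is an up-set in $\Z_{\geq 0}^{m+1}$; Dickson's lemma then furnishes finitely many minimal leading exponents, and the usual Gr\"obner reduction finishes the job. This is precisely the Gr\"obner-theoretic reformulation of the Hilbert basis argument, and the key combinatorial checks you flag (strict lex-monotonicity on each graded piece $1_jM_m$, and realizability of every componentwise increase by an actual diagram) do hold: lex on gap sequences coincides with lex on the larc-position subsets, a width-$j$ diagram $b\in{}_{j'}B^-_j$ acts on larc positions by a strictly increasing map $p$ with $p_k - k$ non-decreasing, and any prescribed componentwise gap increase can be realized by choosing $p$ appropriately. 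One small correction: the induced map on basis vectors of $1_jM_m \to 1_{j'}M_m$ is injective (these spaces have dimensions $\binom{j}{m}$ and $\binom{j'}{m}$), not a bijection; injectivity together with monotonicity is what you actually use, so the argument is unaffected. Both proofs buy the same theorem; the paper's version is more self-contained and visibly parallel to the textbook Hilbert argument, while yours is shorter if one is willing to cite Dickson's lemma, and it makes the monomial-order structure on $M_m$ explicit.
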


\begin{proof} Any finitely generated $A^-$-module  is a quotient of
$\displaystyle{\oplusoop{i=0}{N} P_i^{n_i}}$ for some $N$ and some $n_0, n_1,
\ldots, n_N$, hence it suffices to show $\displaystyle{\oplusoop{i=0}{N} P_i^{n_i}}$
is N\"{o}etherian. Furthermore it is enough to show that any
submodule of $P_n$ is finitely-generated. Since $P_n$ has a finite
filtration by standard modules, it suffices to check that any submodule of a
standard module $M_n$ is finitely-generated.
% We prove this %$M_n$ is N\"{o}etherian  by induction on $n$.
The induction base, case $n=0$ is trivial, since $M_0=\displaystyle{\bigoplus_{m \geq
 0}1_mM_0}$, each term $1_mM_0$ is one-dimensional and generates a
 submodule of finite codimension in $M_0.$

 Basis elements $b$ of $M_n$ can be labeled by length $n+1$ sequences
of non-negative integers $(a_1,a_2, \ldots,a_{n+1})$.  Here $a_1$
is the number of sarcs below the bottom larc and $a_{n+1}$ is the
number of sarcs above the top larc. Each $a_i$, $2 \leq i \leq n$
represents the number of sarcs between ($i-1$)-st and $i$-th larc,
counting larcs from bottom to top (Figure ~\ref{SLARCSBasisMn}).

\begin{figure}[h]
\begin{center}
\scalebox{.8}{\includegraphics{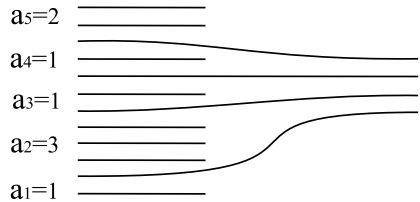}} \caption{Basis
element for $M_n$.} \label{SLARCSBasisMn}
\end{center}
\end{figure}

We call $a_{n+1}$ the degree $deg(b)$ of a basis element
$b=(a_1,a_2, \ldots,a_{n+1})\in M_n.$ Degree of an arbitrary
element $d=\displaystyle{\sum_{i} x_ib_i }\in M_n$, $x_i \in \fieldk^*$ is equal to
$deg\,d=\displaystyle{\max_{i} \,deg(b_i)}.$ For an element
$d=\displaystyle{\sum_{i} x_ib_i }\in M_n$ define an element
$d'=\displaystyle{\sum_{deg\,b_i=deg\,d}} x_ib_i \in M_n$, which
is a sum of terms of $d$ with the highest degree.

\begin{figure}[h]
\begin{center}
\scalebox{.9}{\includegraphics{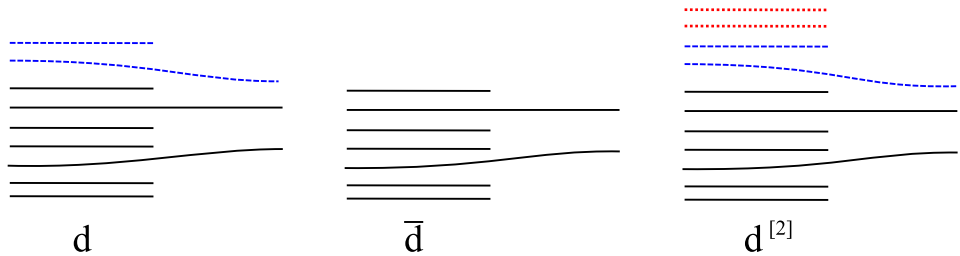}}
\caption{This figure shows element $d\in M_3$, the corresponding
$\overline{d}$ and the element obtained by degree shift 2 denoted
by $d^{[2]}$. The top larc and sarcs above it are denoted by
dashed lines. Two added sarcs in $d^{[2]}$ are shown as dotted
lines.} \label{SLARCSCoeff1}
\end{center}
\end{figure}

Let $\overline{d} \in M_{n-1}$ be the element obtained from an
element $d \in M_n$ by removing the top larc and all of the sarcs
above it in each of the diagrams in $d$. Moreover, we define an
element $d^{[p]}\in M_n$ obtained
from  $d$ by adding $p$ sarcs on the top of each
diagram in $d$. In particular, $deg\,d^{[p]}=deg\,d+p$  (Figure
~\ref{SLARCSCoeff1}).
\begin{figure}
\begin{center}
\scalebox{.9}{\includegraphics{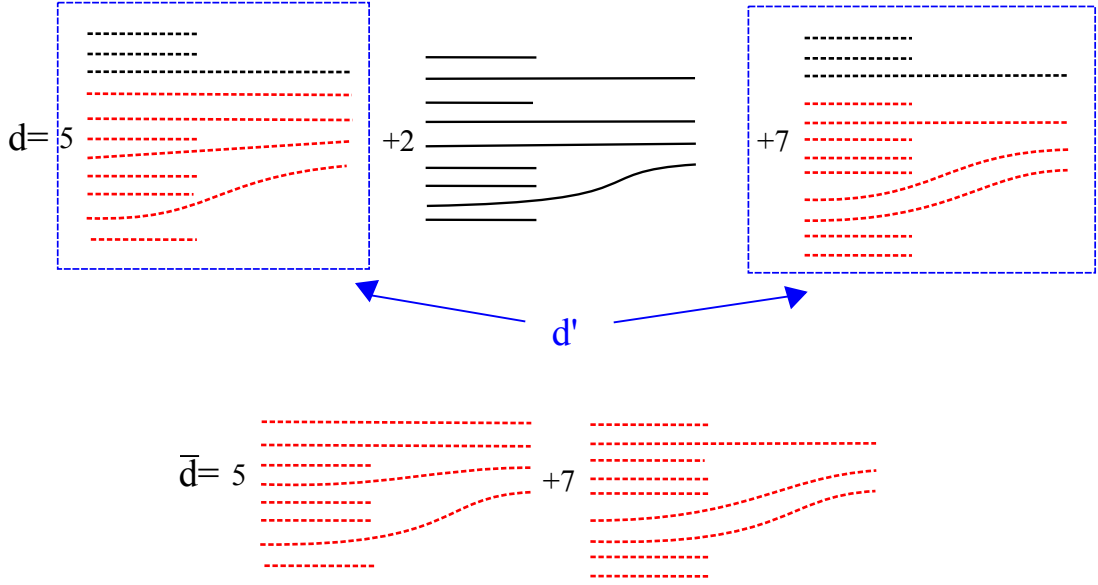}}
\caption{Highest degree summands of the element $d\in M_4$ are
contained in the top left and right rectangles.
The bottom picture shows $\overline{d}$.} \label{SLARCSCoeff}
\end{center}
\end{figure}
To continue with the proof, let $M$ be any submodule of $M_n$ and
$d_0$ be an element of the least degree in $M$. Assuming that
$d_0, \ldots, d_k$ have already been chosen, take $d_{k+1}\in M
\setminus (d_0, \ldots, d_k)$ where $(d_0, \ldots, d_k)$ is the submodule
generated by the least degree elements. Continuing by
induction we obtain a sequence of elements $d_i\in M$.

Let $c_i:=\overline{d'_i} \in M_{n-1}$ and $\overline{M}$ denote
the submodule of $M_{n-1}$ generated by $c_i$'s. According to the
induction hypothesis $M_{n-1}$ is N\"{o}etherian, hence
$\overline{M}=(c_0,c_1,\ldots)$ must be finitely generated. In
other words, there exists $N \in \mathbb{N}$ such that
$\overline{M}=(c_0,c_1,\ldots, c_N)$.

Assume that $M \neq (d_0, \ldots, d_N)$. Then there exist
$d_{N+1}\in M \backslash(d_0, \ldots, d_N)$, and
$c_{N+1}=\displaystyle{\sum_{k=0}^N \alpha_k c_k}$ for some
$\alpha_k \in A^-$.
 %Now what is the degree of $d_{N+1}$? Some $a_{N+1}^{n+1}$
%coefficient must be in the ideal J
 Let $d^*= \displaystyle{\sum_{k=1}^N \alpha_k d_k^{[deg d_{N+1}-degd_k]}}$.
 %so we are adding to the linear combination of $d_k$'s, which is of minimal possible degree
 %we are adding top slarcs to match those of $d_{N+1}$
Now $d_{N+1}-d^* \notin\,(d_0,\ldots, d_N)$ and $deg
(d_{N+1}-d^*)<deg(d_{N+1})$ which contradicts the minimality of
$deg(d_{N+1}).$ Therefore $M =(d_0, \ldots, d_N)$ and $M_n$ is
Noetherian.\footnote{This proof is analogous to the proof that
$\fieldk[x_1,x_2, \ldots, x_n]$ is Noetherian.}
\end{proof}

The involution of the set $B^-$ which reflects a diagram about a
vertical axis takes ${}_n B^-_m$ to ${}_m B^-_n$ and induces an
anti-involution of $A^-.$ Hence the ring $A^-$ is right
N\"{o}etherian as well.

\begin{definition}
  Grothendieck group $K_0(A)$ of finitely generated projective
  $A$-modules is an abelian group generated by symbols $[P]$ of
 finitely-generated projective left $A$ modules
 $P$, with defining relations  $[P]=[P']+[P'']$ if $P \cong P' \oplus P''.$
\label{SLARCGroth}
\end{definition}
\begin{prop}\label{PropK0}
  $K_0(A^-)$ is a free abelian group with basis $\{ [P_n] \}_{n \geq 0}$.
\end{prop}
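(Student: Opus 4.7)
The plan is to use Theorem \ref{SLARCProjModDec} to produce both a spanning set and a group homomorphism in the opposite direction that witnesses freeness.

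First I would show that the classes $\{[P_n]\}_{n\geq 0}$ generate $K_0(A^-)$. This is immediate from Theorem \ref{SLARCProjModDec}: any $P \in A^-\pmod$ admits an isomorphism $P \cong \bigoplus_{n=0}^N P_n^{a_n(P)}$, so by the defining relation of $K_0$,
\[
[P] = \sum_{n=0}^{N} a_n(P)\,[P_n].
\]

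Next, to establish linear independence, I would construct a group homomorphism from $K_0(A^-)$ into a free abelian group that picks off each multiplicity. Concretely, let $F = \bigoplus_{n\geq 0} \Z\, e_n$ and define $\psi : K_0(A^-) \to F$ on generators by
\[
\psi([P]) \;=\; \sum_{n\geq 0} a_n(P)\, e_n,
\]
where $a_n(P)$ is the multiplicity of $P_n$ in the Krull--Schmidt decomposition of $P$ given by Theorem \ref{SLARCProjModDec}. Well-definedness has two aspects: the integers $a_n(P)$ depend only on the isomorphism class of $P$ (this is the invariance statement in Theorem \ref{SLARCProjModDec}), and they are additive on direct sums, $a_n(P'\oplus P'') = a_n(P') + a_n(P'')$, by the uniqueness of the Krull--Schmidt decomposition of $P' \oplus P''$. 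These two facts together imply $\psi$ is well defined on $K_0(A^-)$.

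Finally I would consider the natural homomorphism $\phi \colon F \to K_0(A^-)$ sending $e_n \mapsto [P_n]$. The first step showed $\phi$ is surjective; and by construction $\psi \circ \phi(e_n) = e_n$, so $\phi$ is injective as well. Hence $\phi$ is an isomorphism, proving $K_0(A^-)$ is free abelian on $\{[P_n]\}_{n\geq 0}$. There is no real obstacle here beyond quoting Theorem \ref{SLARCProjModDec}; the only subtlety worth flagging is the well-definedness of $\psi$, which requires both the isomorphism-invariance and the direct-sum additivity of the multiplicities supplied by the Krull--Schmidt property of $A^-\dmod$.
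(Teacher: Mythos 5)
Your proof is correct and takes the same route as the paper: the paper simply asserts that the proposition follows from Theorem~\ref{SLARCProjModDec}, and your argument is the standard way to fill in that deduction, using the decomposition for spanning and the isomorphism-invariance plus direct-sum additivity of the multiplicities to build the inverse map witnessing freeness.
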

Proposition \ref{PropK0} follows from Theorem \ref{SLARCProjModDec}.

Observe that the existence of the filtration (\ref{SLArcsFilterPn}) of projective
modules $P_n$ by standard modules $M_m$ implies that $M_m$ has a finite projective
resolution $P(M_m)$ by $P_n$'s, for $n\leq m.$ Consequently, we can view $M_m$ as
an object of the category $\mc{C}(A^- \pmod)$ of bounded complexes of
finitely-generated projective $A^-$-modules. Morphisms in this category are
homomorphisms of complexes modulo zero-homotopic homomorphisms.
Grothendieck groups of categories $A^-\pmod$ and $\mc{C}(A^- \pmod)$
 are canonically isomorphic:
$$ K_0(\mc{C}(A^-\pmod))\cong K_0(A^-\pmod)$$ via the isomorphism taking the symbol of
\begin{center}
$Q=(\dots\ra P^i \ra P^{i+1}\ra \dots) \in \mc{C}(A^- \pmod)\,$ to $\,[Q]=\displaystyle{\sum_{i\in \Z} (-1)^i [P^i]}\in K_0(A^-).$
\end{center}
 Hence, the equality ~(\ref{SARCSPnMminG}) can be interpreted within $K_0(A^-).$

The transformation matrix from the basis of the symbols $[P_n]$ of indecomposable
projective modules to the basis of symbols $[M_m]$ of standard modules is
upper-triangular, with ones on the diagonal and nonzero
coefficients being the binomials $\sbinom{n}{m}.$ The entries of the inverse
matrix are $(-1)^{n+m}\sbinom{n}{m}.$ Thus we
have the following equation in $K_0(A^-)$:
\begin{equation}
[M_n] = \sum_{m=0}^{n} (-1)^{n+m}\sbinom{n}{m} [P_m].
\label{SLARCSMnPminG}
\end{equation}

We identify the projective Grothendieck group $K_0(A^-)$ with
$\Z[x]$ by sending the symbols of projective modules $[P_n]$ to monomials $x^n$,
 and define an inner product on the basis $\{x^n\}_{n \geq 0}$ by
\begin{equation}
(x^n, x^m)=\dim\,\Hom(P_n,P_m)=|{}_nB^-_m|=\sbinom{n+m}{m}
\end{equation}

This identification will be justified in Section \ref{Tensor} by introducing a monoidal
structure on $A^- \pmod$ under which $P_n \otimes P_m \cong P_{n+m}.$

Under this identification, equation (\ref{SLARCSMnPminG}) gives
\begin{equation}
[M_n]=\displaystyle{\sum_{m \leq n}(-1)^{n+m}\sbinom{n}{m}x^m}=(x-1)^n,
\end{equation}
so the symbols of standard modules  $[M_n]$ correspond to $(x-1)^n.$

Equation (\ref{SLARCSMnPminG}) hints at the existence of a projective resolution
of $M_n$ which starts with $P_n$ and has $\sbinom{n}{m}$ copies of
$P_m$ in the $(n-m)$-th position:
\begin{equation}
0\ra P_0 \ra \dots \ra P_{n-m}^{\sbinom{n}{m}} \ra \dots \ra
P_{n-2}^{\sbinom{n}{2}}\ra P_{n-1}^{\sbinom{n}{1}} \ra P_n
\ra M_n \ra 0 \label{SLARCSResMnbyPn}
\end{equation}
Let us construct this resolution.

\begin{figure}[h]
\scalebox{1}{\includegraphics{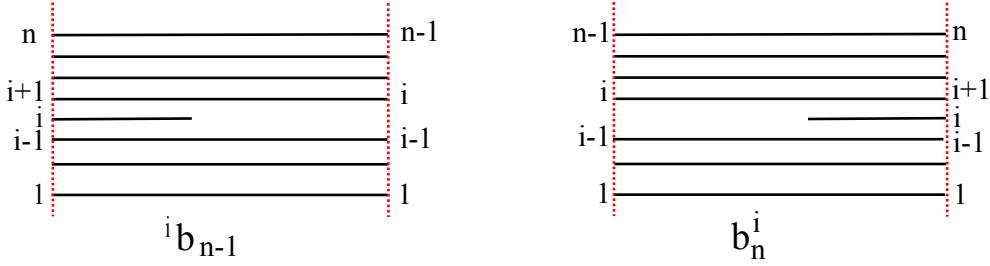}}
\caption{Diagrams  ${}^ib_{n-1} $ and $b_n^i$ used in defining differentials in projective resolution of standard modules and resolution of simple by standard modules.}
\label{SLARCSDifferentialResMn}
\end{figure}

  Denote the diagram with $n$ larcs and one left sarc at the
$i$-th position by  ${}^ib_{n-1} \in  {}_{n}B^-_{n-1}$.   The diagram
obtained from ${}^ib_n$ by a reflection along the vertical axis
is denoted by $b_n^i \in {}_{n-1}B^-_{n}$, Figure  \ref{SLARCSDifferentialResMn}.
The product of ${}^{i}b_{n-1}$ or $b_n^{i}$ with an arbitrary diagram
$a \in B^-$, when defined and non-zero, differs from the diagram $a$
in the following way (see Figure \ref{SLARCSBasicD}):

\begin{enumerate}
   \item $a\cdot{}^{i_j}b_n$ turns $i_j$th larc in a diagram $a$ into left sarc,
   \item ${}^{i_j}b_n \cdot a$ adds left sarc between $i$th and
      $i+1$-st larc in $a$,
   \item $a \cdot b_n^{i_j}$ adds right sarc between $i$th and
     $i+1$-st larc in $a$,
   \item $b_n^{i_j} \cdot a$ turns $i_j$th larc  in a diagram $a$ into right sarc.
\end{enumerate}

\begin{figure}[h]
\scalebox{1}{\includegraphics{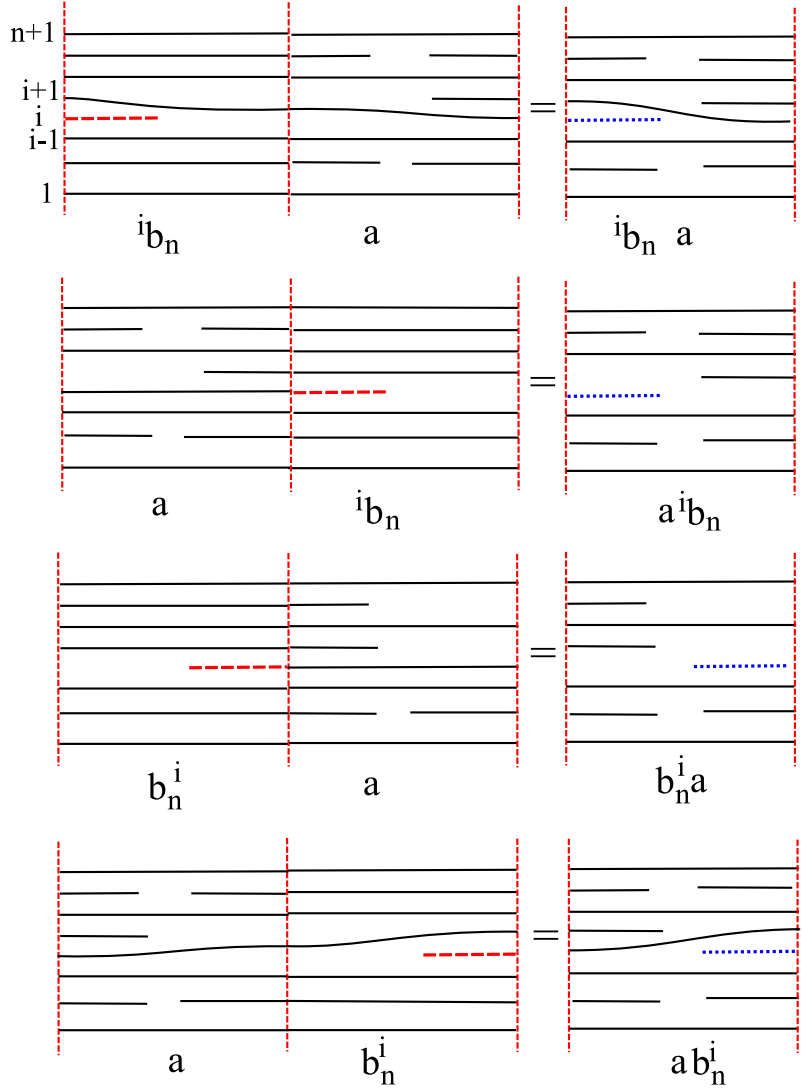}} \caption{Diagrams
${}^{i}b_n$ and $b_n^{i}$ and their products with a
diagram $a \in B^-$. Dashed line represents the difference between
them and the diagram $1_n$ and the dotted line in the resulting
diagram emphasizes the difference between diagram $a$ we started with and
the product diagram.} \label{SLARCSBasicD}
\end{figure}

Let $I_{m}=\{i_1,\ldots,i_m \} \subseteq \{1, \ldots, n \}$, $i_1<\dots<i_m $ be a subset
of cardinality $m \leq n$. Label the summands of the $m$-th term  $P_{n-m}^{\sbinom{n}{m}}$
by these subsets $I_m$,  $P_{n-m}^{I_m}$. Let $I_{m,l}:= I_m \setminus \{i_l\}$.
Removing an element $i_l$ of a set $I_m$ can be interpreted as composing a diagram in $B_{n-m}$
on the right with a diagram $b^{p}_{n-m+1}$, obtained in the following way. Take a diagram
$b_n^{i_l}$ and delete all long arcs at positions labeled by elements in $I_{m,l}$,
resulting in a diagram  $b^{p}_{n-m+1}$, where $p$ denotes a position of $i_l$ in the ordered set
 $\{1,2, \ldots, n\} \setminus I_m \cup \{i_l\}$,  Figure ~\ref{SLARCSDifferentialResMn}.

\begin{figure}[h]
\scalebox{1.35}{\includegraphics{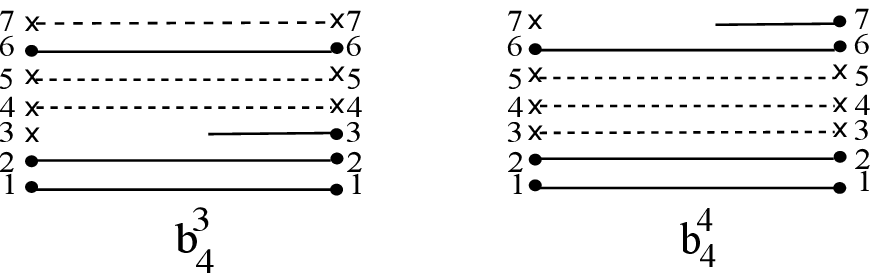}}
\caption{Differentials $d_{\{3,4,5,7\},+1}$ and $d_{\{3,4,5,7\},+4}$ in the projective resolution of
standard module $M_7$.}
\label{SlarcsDifferentialNov}
\end{figure}

Next, define the differential $$d:P_{n-m}^{ \sbinom{n}{m}}\lra P_{n-(m-1)}^{\sbinom{n}{m-1}}$$
 as the sum $$d=\displaystyle{\sum_{I_m} \sum_{l=1}^m d_{I_{m,+l}}}.$$
of maps $d_{I_m,+l}:P_{n-m}^{I_m}\ra P_{n-(m-1)}^{I_{m, l}}$
sending $a \in P_{n-m}^{I_m}$ into $d_{I_{m,+l}}(a)=(-1)^{l-1}a \cdot b^{p}_{n-m+1}$,
 For example, Figure  \ref{SlarcsDifferentialNov} shows how to define the differentials
 $d_{\{1,3,4,5\},+5}$ and $d_{\{1,3,4,5\},+1}$ in the resolution of
$M_7$ sending $P_3^{\{1,3,4,5\}}$ into $P_4^{\{1,3,4\}}$ and $P_4^{\{3,4,5\}}$, respectively.

\begin{prop}\label{SLARCSThmPnMn}
The complex  ~(\ref{SLARCSResMnbyPn}) with the differential defined above is exact.
\end{prop}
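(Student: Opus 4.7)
The plan is to decompose (\ref{SLARCSResMnbyPn}) as a direct sum, indexed by subsets $T \subseteq \{1,\ldots,n\}$ of potential right-sarc positions, of augmented simplicial chain complexes of simplices tensored with standard modules; contractibility of the simplex then yields exactness.

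For $T \subseteq \{1,\ldots,n\}$, let $P_n^T \subset P_n$ denote the span of diagrams whose right sarcs occupy precisely the positions $T$. Since left multiplication does not alter the right side of a diagram, $P_n^T$ is an $A^-$-submodule, and forgetting the right sarcs at $T$ produces an $A^-$-isomorphism $P_n^T \cong M_{n-|T|}$; hence $P_n = \bigoplus_T P_n^T$. Likewise, a basis element $(a, I_m)$ of $P_{n-m}^{I_m}$ carries right sarcs forming a set $R \subseteq \{1,\ldots,n-m\}$, which under the order-preserving bijection with $\{1,\ldots,n\}\setminus I_m$ becomes a subset disjoint from $I_m$. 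Setting $T := I_m \sqcup R$, we obtain
$$P_{n-m}^{\binom{n}{m}} \;\cong\; \bigoplus_{T \subseteq \{1,\ldots,n\}}\;\bigoplus_{\substack{I_m \subseteq T\\|I_m|=m}} M_{n-|T|}.$$

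The key observation is that the differential preserves $T$: the map $d_{I_m,+l}$ right-multiplies $a$ by $b^{p}_{n-m+1}$, which inserts a right sarc at the endpoint labelled $i_l$ in the $\{1,\ldots,n\}$-coordinates, while simultaneously removing $i_l$ from $I_m$. Consequently $I_m \sqcup R$ is unchanged, the action on the $M_{n-|T|}$-factor is the identity, and on the combinatorial factor the differential is exactly the simplicial boundary
$$[I_m]\;\longmapsto\;\sum_{l=1}^{m} (-1)^{l-1}\,[I_m\setminus\{i_l\}]$$
on subsets of the vertex set $T$. Thus the $T$-summand of (\ref{SLARCSResMnbyPn}) is isomorphic to the augmented simplicial chain complex $\widetilde{C}_\bullet(\Delta^{|T|-1}) \otimes M_{n-|T|}$, with the $m=0$ term corresponding to the empty face $\widetilde{C}_{-1} \cong \fieldk$.

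The augmentation $P_n \to M_n$ restricts to the identity on the $T=\emptyset$ summand $M_n$ and vanishes on every other $T$-summand, matching the simplicial augmentation of $\widetilde{C}_\bullet(\Delta^{|T|-1})$. For $T=\emptyset$ the subcomplex collapses to $M_n \xrightarrow{\mathrm{id}} M_n$; for $T\neq\emptyset$, $\Delta^{|T|-1}$ is contractible, so $\widetilde{C}_\bullet(\Delta^{|T|-1})$ is acyclic. Summing over $T$ yields exactness of (\ref{SLARCSResMnbyPn}). The anticipated obstacle is entirely combinatorial: verifying that $b^{p}_{n-m+1}$ inserts the new right sarc at precisely position $i_l$ once the right endpoints of $P_{n-m}$ are relabelled by $\{1,\ldots,n\}\setminus I_m$, and that the sign $(-1)^{l-1}$ coincides with the simplicial boundary sign determined by the ordering of $I_m$ inside $T$. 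Once this bookkeeping is in place, the rest reduces to the acyclicity of a simplex.
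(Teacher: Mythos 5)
Your decomposition is essentially the paper's decomposition: the paper also splits~(\ref{SLARCSResMnbyPn}) as a complex of vector spaces into summands indexed by the right-boundary data (diagrams in ${}_kB^-_n$ with no left sarcs, i.e.\ by your subset $T$), and identifies each summand with the total complex of a cube with $\fieldk$ in every vertex, which is the same thing as your reduced chain complex of a simplex. So the route is the same and the key combinatorial observation---that the differential preserves $T$ and realizes the simplicial boundary on the labels $I_m\subseteq T$---is exactly what the paper uses. However, one claim in your write-up is false and should be removed: $P_n^T$ is \emph{not} an $A^-$-submodule of $P_n$. Left multiplication by a diagram $b$ that has a right sarc hitting a larc of $a$ converts that larc's right endpoint into a right sarc of $ba$, so the set of right-sarc positions can grow; only $P_n(\le m)=\bigoplus_{|T|\ge n-m}P_n^T$ is a submodule, and $P_n^T$ is merely a subquotient isomorphic to $M_{n-|T|}$. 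This does not harm the argument, because exactness is a statement about the underlying complex of vector spaces and your direct sum decomposition and its compatibility with the differential are correct at that level; but you should phrase the decomposition as one of vector spaces (or of associated graded pieces) rather than of $A^-$-modules.
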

\begin{proof}
  \begin{figure}[h]
\scalebox{.95}{\includegraphics{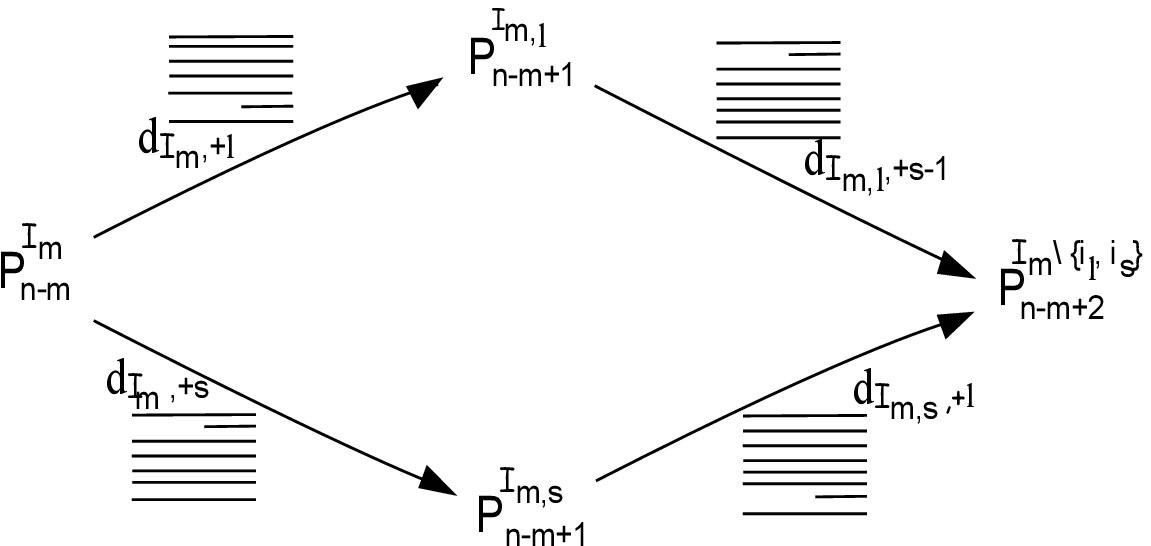}}
 \caption{Commutative diagram for the projective resolution of standard modules.}
\label{SLARCSProjResStand}
 \end{figure}
% Without loss of generality, label projective modules in homological degrees $n-m, n-m+1$ and $n-m+2$, by the corresponding subsets:
%$I_m$, $I_{m,l}$, $I_{m,s}$, $I_m \setminus \{i_l, i_s \}$.
The proof that $d^2=0$  follows from the sign convention and the commutative diagram on Figure  ~\ref{SLARCSProjResStand} which shows $d_{I_{m,l},+s-1} \cdot d_{I_m,+l} =d_{I_{m,s},+l} \cdot d_{I_m,+s}$, for $l<s.$
 The proof that ~(\ref{SLARCSResMnbyPn}) is exact uses a slight
generalization of this square. Viewed as a complex of vector spaces,  ~(\ref{SLARCSResMnbyPn}) splits into the sum of
complexes:
\begin{equation*}
0 \ra 1_p P_0 \ra \dots \ra 1_p P_{n-m}^{\sbinom{n}{m}} \ra \dots \ra
1_p P_{n-2}^{\sbinom{n}{2}}\ra 1_p P_{n-1}^{\sbinom{n}{1}} \ra 1_pP_n
\ra 1_p M_n \ra 0
\end{equation*}
 one for each element of $_{k}B^-_n$, $k \leq n$ with no left
sarcs. Each of the complexes in the sum is isomorphic to the total complex of
an $p$--dimensional cube with a copy of ground field $\mathbf{k}$
in each vertex and each edge an isomorphism. Hence, all complexes are contractible.
\end{proof}

A finite--dimensional $A^-$--module $M$ has a finite filtration with simple modules $L_n$
as subquotients.
Due to one-dimensionality of $L_n$ the multiplicity of $L_n$ in $M$, denoted by $[M:L_n]$,
 equals dim$1_nM$. A finitely-generated $A^-$--module $M$ is not necessarily finite
 dimensional but it satisfies the following property
 \begin{equation*}
   dim(1_nM)< \infty, \, \rm{ for } \, n \ge 0,
 \end{equation*}
which we call a locally finite--dimensional property.

 For locally finite--dimensional  module $M$ we define the multiplicity of $L_n$ in $M$ as:
 \begin{equation*}
   [M:L_n] := dim(1_nM).
   \label{SLARCSMultiplicity}
\end{equation*}

This definition is compatible with the usual notion of multiplicity  of
$L_n$ in $M$ as the number of times $L_n$ appears in the composition series
of $M$ when $M$ is finite--dimensional.

 Let us now specialize to standard modules $M_m$. We have
\begin{equation}
  [M_m:L_n]=  dim(1_nM_m)=
 \left\{
  \begin{array}{ll}
    \sbinom{n}{m}, & \hbox{ for $n\geq m$;} \\
    0, & \hbox{if $n<m$.}
  \end{array}
\right.
\end{equation}

Recall that $[P_n:M_m]= \sbinom{n}{m} $, hence
\begin{equation}
 [P_n:M_m]= [M_m:L_n].
 \label{SLARCSBGG}
\end{equation}
Thus, our diagrammatically defined algebra possesses the
Bernstein--Gelfand--Gelfand (BGG) reciprocity property. Indecomposable projective
modules  $P_n$ have filtration by standard modules $M_m$, with $m
\leq n$ and $[P_n:M_n]= 1.$  The
multiplicity in the RHS in the equality (\ref{SLARCSBGG})is understood in the generalized sense,
as explained above.

Define the Cartan matrix $C(A^-)$ by
\begin{equation}
C(A^-)_{i,j}:= dim {\Hom} (P_i, P_j)
  \label{SLARCSCartan}
\end{equation}
and by $m(A^-)$ the multiplicity matrix $m(A^-)_{i,j}:=[P_i: M_j]=[M_j:L_i].$
 Then we have the following equality:
\begin{equation}
  C(A^-)= m(A^-)m(A^-)^t.
\end{equation}
Indeed,
\begin{eqnarray*}
  C(A^-)_{i,j}&=& dim {\Hom} (P_i, P_j)=[P_i : L_j] \\
              &=& \sum_k [P_i: M_k][M_k : L_j] = \sum_k m(A^-)_{i,k} m(A^-)_{j,k}  \\
              &=& \sum_k m(A^-)_{i,k} m(A^-)^t_{k,j}= (m(A^-)m(A^-)^t)_{i,j}.
\end{eqnarray*}

\begin{prop}
$\mathrm{Ext}^i(M_n, M_m)=(1_{n-i}M_m)^{\sbinom{n}{i}}$.
\end{prop}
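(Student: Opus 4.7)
The plan is to compute $\mathrm{Ext}^i(M_n, M_m)$ directly from the explicit projective resolution of $M_n$ established in Proposition~\ref{SLARCSThmPnMn}, and to observe that after applying $\Hom_{A^-}(-,M_m)$ every induced coboundary vanishes, so the cohomology is just the complex itself.

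Write the resolution (\ref{SLARCSResMnbyPn}) as $P_\bullet \to M_n$ with $P_j = P_{n-j}^{\binom{n}{j}}$. Applying $\Hom_{A^-}(-,M_m)$ and using the standard identification $\Hom_{A^-}(P_k,N)\cong 1_kN$, $f\mapsto f(1_k)$, already exploited in the first proposition of Section 2, one obtains a cochain complex whose term in degree $i$ equals $(1_{n-i}M_m)^{\binom{n}{i}}$. The claim therefore reduces to verifying that every coboundary in this complex is the zero map.

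This is the main (and essentially only) step. Each component $d_{I_{j+1},+l}\colon P^{I_{j+1}}_{n-j-1}\to P^{I_{j+1,l}}_{n-j}$ of the differential in the resolution is, by construction, right multiplication by $\pm b^{p}_{n-j}$, and the diagram $b^{p}_{n-j}\in {}_{n-j-1}B^-_{n-j}$ carries a right sarc. Under the identification $\Hom(P_\bullet,M_m)\cong 1_\bullet M_m$, a right-multiplication map $P_{n-j-1}\to P_{n-j}$ dualizes to the left-multiplication map $1_{n-j}M_m\to 1_{n-j-1}M_m$, $x\mapsto \pm b^{p}_{n-j}\cdot x$. By the defining property of $M_m$ recorded just after its introduction (and illustrated in Figure~\ref{SLARCSonMn}), any diagram with a right sarc annihilates every element of $M_m$, so this map is identically zero. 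Hence every coboundary in $\Hom(P_\bullet,M_m)$ vanishes, and $\mathrm{Ext}^i(M_n,M_m)=(1_{n-i}M_m)^{\binom{n}{i}}$.

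I expect no substantial obstacle once Proposition~\ref{SLARCSThmPnMn} is in hand. The only points to keep straight are that right multiplication between projectives dualizes to left multiplication on modules, and that each building block $b^{p}_{n-j}$ of the differential is specifically a diagram with a right sarc, so the defining relation of standard modules kills every coboundary in one stroke.
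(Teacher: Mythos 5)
Your proposal is correct and follows the same route as the paper: apply $\Hom_{A^-}(-,M_m)$ to the projective resolution (\ref{SLARCSResMnbyPn}), identify $\Hom(P_k,M_m)\cong 1_kM_m$, and observe that the induced coboundary maps vanish. The paper simply asserts the vanishing, while you correctly supply the reason — each differential is right multiplication by a diagram $b^{p}_{n-j}$ containing a right sarc, which dualizes to left multiplication by that diagram on $M_m$ and hence is zero by the defining property of the standard module (Figure~\ref{SLARCSonMn}).
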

\begin{proof}
Since the map between $\Hom(P_k,M_m)$ and $\Hom(P_{k-1},M_m)$ induced by the differential
in the projective resolution of standard module $M_n$ is trivial, proof follows from the
fact that $\Hom(P_k,M_m)=\Hom(A^-1_k, M_m)=1_kM_m.$
\end{proof}

\begin{prop}\label{Prop27}
\begin{equation}
\mathrm{Ext}^i(M_n, L_m)\cong
\left\{%
\begin{array}{ll}
       \mathbf{k}^{\sbinom{n}{n-m}} & \hbox{ {\rm if } $m \leq n, i=n-m$;} \\
   0 & \hbox{{\rm otherwise.}} \\
\end{array}%
\right.
\end{equation}
\end{prop}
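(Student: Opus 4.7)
The plan is to apply the contravariant functor $\Hom_{A^-}(-, L_m)$ to the projective resolution of $M_n$ constructed in (\ref{SLARCSResMnbyPn}) and read off the cohomology of the resulting complex. Since the resolution has been shown to be exact in Proposition \ref{SLARCSThmPnMn}, these cohomology groups compute $\mathrm{Ext}^{\ast}(M_n, L_m)$.

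The key computation is that $\Hom_{A^-}(P_k, L_m) = 1_k L_m$, and because $L_m = \fieldk 1_m$ is one-dimensional with $1_k$ acting as the identity if $k=m$ and as $0$ otherwise, this hom space equals $\fieldk$ when $k = m$ and vanishes when $k \neq m$. Applying this term by term to the resolution, the $j$-th spot (with $P_n$ in degree $0$) yields
\[
\Hom_{A^-}\bigl(P_{n-j}^{\sbinom{n}{j}}, L_m\bigr) \cong (1_{n-j}L_m)^{\sbinom{n}{j}},
\]
which is nonzero precisely when $n - j = m$, that is, $j = n - m$, and in that case equals $\fieldk^{\sbinom{n}{n-m}}$.

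Thus after applying $\Hom_{A^-}(-, L_m)$, the complex computing $\mathrm{Ext}^{\ast}(M_n, L_m)$ is concentrated in a single cohomological degree $i = n - m$ (which requires $m \leq n$; if $m > n$ no term survives). All differentials are automatically zero since they go to or from zero groups, so no cocycle/coboundary computation is needed: the Ext groups coincide with the terms themselves.

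There is no substantial obstacle in this argument once Proposition \ref{SLARCSThmPnMn} is in hand; the entire content is the observation that the projective resolution of $M_n$ is ``diagonal'' with respect to the idempotent decomposition, so pairing it with the concentrated simple module $L_m$ picks out exactly one term. This gives $\mathrm{Ext}^{n-m}(M_n,L_m)\cong \fieldk^{\sbinom{n}{n-m}}$ when $m \leq n$ and vanishing in all other cases, as claimed.
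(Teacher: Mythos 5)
Your proof is correct and follows essentially the same route as the paper: apply $\Hom_{A^-}(-, L_m)$ to the projective resolution of $M_n$, observe that $\Hom(P_k, L_m) = 1_k L_m$ is $\fieldk$ precisely when $k = m$ and zero otherwise, and conclude the resulting complex is concentrated in degree $n-m$. Your write-up is a bit more explicit about the idempotent computation and why the differentials vanish automatically, but the argument is the same.
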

\begin{proof}
Obviously,  $\mathrm{Ext}^i(M_n, L_m)=0$ for $m>n$. To compute
$\mathrm{Ext}^i(M_n, L_m)$ we use projective resolution
~(\ref{SLARCSResMnbyPn}) and get the complex:
\begin{equation}
0 \leftarrow \Hom(P_0,L_m) \leftarrow \ldots
%\ra   \Hom(P_{n-2},L_m)^{\oplus \frac{n(n-1)}{2}}
\leftarrow  \Hom(P_{n-1},L_m)^{\oplus n} \leftarrow \Hom(P_n,L_m)
\leftarrow 0 \label{SLARCSMnLm}
\end{equation}
Notice that $\Hom(P_{n-k},L_m)=\left\{%
\begin{array}{ll}
    \mathbf{k}, & \hbox{if $m=n-k$;} \\
    0, & \hbox{otherwise.} \\
\end{array}%
\right.    $\\
 In the case $m=n-k$, $k \in \mathbb{Z}_+$, complex
~(\ref{SLARCSMnLm}) will be nontrivial  only in
degree $n-m$, and
$H^{n-m}=\mathbf{k}^{\sbinom{n}{n-m}}=\mathrm{Ext}^{n-m}(M_n,
L_m)$. All other Ext's are zero.
\end{proof}

\begin{prop}
Homological dimension of slarc algebra standard module $M_n$ is
$n$.
\end{prop}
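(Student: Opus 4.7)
The plan is to sandwich the homological dimension between $n$ and $n$ using the two results already established.

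For the upper bound, I would invoke the explicit projective resolution (\ref{SLARCSResMnbyPn}) of $M_n$, which was shown to be exact in Proposition \ref{SLARCSThmPnMn}. This resolution has length exactly $n$ (its leftmost nonzero term is $P_0$, sitting in homological degree $n$), so the projective dimension of $M_n$ is at most $n$.

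For the lower bound, I would use Proposition \ref{Prop27}. Specializing to $m=0$ and $i=n-m=n$, we obtain
\begin{equation*}
\mathrm{Ext}^n(M_n, L_0) \cong \mathbf{k}^{\binom{n}{n}} = \mathbf{k} \neq 0,
\end{equation*}
which forces the projective dimension of $M_n$ to be at least $n$. Combining the two bounds yields homological dimension exactly $n$.

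The argument is essentially a one-line synthesis of the two previous propositions, so there is no substantive obstacle; the only thing to check is that the conventions on the indexing of the resolution and on $\mathrm{Ext}^i$ line up, which is immediate from the statement of Proposition \ref{Prop27}.
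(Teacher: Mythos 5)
Your proposal is correct and follows the paper's own argument verbatim: the upper bound comes from the length-$n$ resolution (\ref{SLARCSResMnbyPn}), and the lower bound comes from Proposition \ref{Prop27} with $m=0$, which gives $\mathrm{Ext}^n(M_n, L_0)\cong\mathbf{k}\neq 0$.
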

\begin{proof}
Projective dimension of $M_n$ is at most $n$ as we have
constructed a projective resolution ~(\ref{SLARCSResMnbyPn}) of
that length. For $m=0$, Proposition \ref{Prop27} says
that $Ext^n(M_n, L_0)= \mathbf{k}$, hence the projective dimension is equal to $n$.
\end{proof}

Next we construct a resolution of a simple module $L_k$ by standard modules
$M_m$ for $m\ge k:$
 \begin{equation}
 \stackrel{d}\lra  M_{k+m}^{ \sbinom{k+m}{m}}\stackrel{d}\lra \dots \stackrel{d}\lra  M_{k+2}^{ \sbinom{k+2}{2}} \stackrel{d}\lra M_{k+1}^{\sbinom{ k+1}{1}}
\stackrel{d}\lra M_k \stackrel{d}\lra L_k \lra 0.
\label{SLARCSResLnByMn}
 \end{equation}
 Let $I_m=\{i_1,i_2, \ldots, i_m\}$ be a subset of $ \{1,2, \ldots,n \}$,
 $m \leq n$, $i_1<i_2< \ldots< i_m$.  Let
 $I_{m,-p}$ denote the set obtained from $I_m$ by removing the $p$--th element and subtracting $1$ from
 all subsequent elements:
 \begin{equation}\label{defIm}
   I_{m,-p}= \{i_1,i_2, \ldots,i_{p-1}, i_{p+1}-1, \ldots, i_{m}-1\} = I_m \setminus \{i_p \}
 \end{equation}

 The $m$-th term of the resolution is a direct sum $M_{k+m}^{\sbinom{k+m}{m}}$
of standard modules $M_{k+m}$. On the level of diagrams, multiplicity $\sbinom{k+m}{m}$ represents the
number of ways to add $m$ right sarcs to a diagram in $M_k$ to obtain a diagram in $M_{k+m}$.
Let $I_m=\{i_1,i_2, \ldots, i_m\} \subseteq \{1,2, \ldots,k+m \}$ be the set
describing positions of added larcs. Each summand  $M_{k+m}^{I_m}$ is
labeled by one of these subsets, and the differential will take summand labeled
by $I_m$ into summands labeled by $I_{m,-l}$, for $0<l \leq m$, by composing on the right with diagrams containing a single short right arc and no left sarcs, see Figure  ~\ref{SLARCSDifferentialResMn}.

 More precisely, let us define maps $$d_{I_m,-l}: M_{k+m}^{I_m} \xrightarrow{^{l} b_{k+m-1} } M_{k+m-1}^{I_{m,-l}}$$
 that send  $a \in M_{k+m}^{I_m}$ into $$d_{I_m,-l}(a)= (-1)^l\, a\, \cdot {}^{l}b_{k+m-1}$$
 where  the diagram ${}^{l}b_k$ is shown on the Figure \ref{SLARCSDifferentialResMn}.
 The differential $$d:M_{k+m}^{\sbinom{k+m}{m}} \rightarrow M_{k+m-1}^{\sbinom{k+m-1}{m-1}}$$ is an alternating sum
of these maps $$d=\displaystyle{\sum_{I_m} \sum_{l=1}^m (-1)^l d_{I_m,-l}}.$$

\begin{figure}[h]
\scalebox{1.2}{\includegraphics{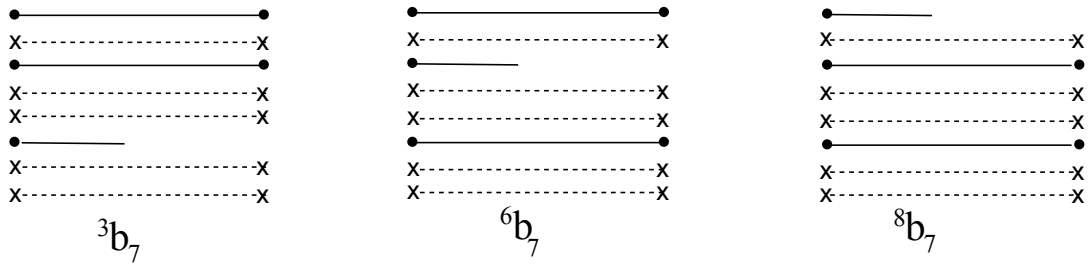}}
\caption{Examples of diagrams used in defining differential maps $d_{\{3,6,8\},-1}$,
$d_{\{3,6,8\},-2}$ and $d_{\{3,6,8\},-3}$ in the resolution of simple module $L_5$ by standard modules.}
\label{SlarcsDiffNovMnLm}
\end{figure}

 For example, diagrams on Figure ~\ref{SlarcsDiffNovMnLm} show how to define the differentials'  maps $d_{\{3,6,8\},-1}$, $d_{\{3,6,8\},-2}$ and $d_{\{3,6,8\},-3}$ in the resolution of
$L_5$ sending $M_8^{\{3,6,8\}}$ into $M_7^{\{5,7\}}$, $M_7^{\{3,7\}}$, and $M_7^{\{3,6\}}$.
In general, for a map $d_{I_m, -l}$, $0<l\leq m$, sending $M_{n+1}^{I_{m}} \ra M_n$ in the resolution of $L_{n+1-m}$, start with a diagram $1_{n+1}$, turn arc $i_l$ into a short right arc, then remove all long arcs labeled by numbers which are not in $I_m={i_1, i_1, \ldots, i_m}$, shown in dotted lines on Figure ~\ref{SlarcsDiffNovMnLm}.

\begin{prop}\label{proofLnByMn}
The complex ~(\ref{SLARCSResLnByMn}) with the differential defined above is exact.
\end{prop}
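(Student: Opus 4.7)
The plan is to verify $d\circ d=0$ by pairwise cancellation, and then establish exactness by restricting to each idempotent $1_p$ and further decomposing according to a choice of a ``base'' $k$-subset $U$, identifying every resulting subcomplex with an augmented simplicial chain complex of a simplex.

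To verify $d\circ d=0$, fix $1\leq l<s\leq m$. The composite $d_{I_{m,-l},-(s-1)}\circ d_{I_m,-l}$ turns the $l$-th and then the $s$-th marked larc of the source diagram into left sarcs (the shift $s\mapsto s-1$ accounts for relabeling after the first removal), and carries sign $(-1)^{l+s-1}$. The composite $d_{I_{m,-s},-l}\circ d_{I_m,-s}$ produces the same underlying diagram with sign $(-1)^{l+s}$. The two terms cancel, so the $\binom{m}{2}$ pairs cancel pairwise in $d\circ d$. This is the exact analogue of the commutative-square argument used in Proposition~\ref{SLARCSThmPnMn}.

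For exactness, apply the exact functor $1_p(-)$ for each $p\geq 0$. If $p<k$ every term vanishes. If $p=k$ the only surviving modules are $1_kM_k\cong\fieldk$ and $1_kL_k\cong\fieldk$, joined by an isomorphism. The main case is $p>k$; set $q=p-k\geq 1$. Here $1_pL_k=0$ and $1_pM_{k+m}=0$ once $m>q$, so it suffices to show that $0\to 1_pM_{k+q}^{\binom{k+q}{q}}\to\cdots\to 1_pM_k\to 0$ is exact. Using non-crossing of larcs, a basis element of $1_pM_{k+m}^{I_m}$ is determined by the set $S=\{s_1<\cdots<s_{k+m}\}\subseteq\{1,\ldots,p\}$ of left-endpoint positions of larcs together with the label $I_m$; equivalently, by a pair of disjoint subsets $T=\{s_{i_1},\ldots,s_{i_m}\}$ and $U=S\setminus T$ of $\{1,\ldots,p\}$ with $|T|=m$ and $|U|=k$. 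Under this bijection, $d_{I_m,-l}$ removes the $l$-th smallest element of $T$ with sign $(-1)^l$ and leaves $U$ untouched.

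Consequently the $1_p$-restriction decomposes further as a direct sum over the $\binom{p}{k}$ choices of $U\subseteq\{1,\ldots,p\}$ with $|U|=k$. For each fixed $U$, the subcomplex has, in homological degree $m$ (for $0\leq m\leq q$), a basis indexed by subsets $T\subseteq\{1,\ldots,p\}\setminus U$ with $|T|=m$, and the differential is the standard Koszul boundary. This is precisely the augmented simplicial chain complex of the $(q-1)$-simplex on the $q$-element vertex set $\{1,\ldots,p\}\setminus U$, which is acyclic since a simplex is contractible and $q\geq 1$. Hence every $1_p$-restriction is exact, and the original complex is exact. The main conceptual step, and the one requiring care, is the combinatorial reindexing $(I_m,S)\leftrightarrow(T,U)$ that exhibits each $1_p$-piece as a direct sum of augmented simplicial complexes; once that identification is made, exactness is automatic.
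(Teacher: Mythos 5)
Your proof is correct and follows essentially the same route as the paper's: restrict to each idempotent $1_p$, then decompose the resulting complex of vector spaces according to the positions of the $k$ "base" larcs (your set $U$), and observe that each piece is a standard contractible complex. The only cosmetic difference is that you describe each piece as the augmented simplicial chain complex of a $(q-1)$-simplex, while the paper describes the same object as the total complex of a $q$-dimensional cube with a copy of $\fieldk$ at each vertex and each edge an isomorphism; these are isomorphic complexes, so the arguments coincide.
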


\begin{proof}
The proof that $d^2=0$ is the same as in Proposition \ref{SLARCSThmPnMn}, except that the differential is defined
using diagrams that lower the number of larcs, see Figure \ref{SLARCSDifferentialResMn} and Figure \ref{SLARCSProjResStand}.

\begin{figure}[h]
\begin{center}
\scalebox{0.6}{\includegraphics{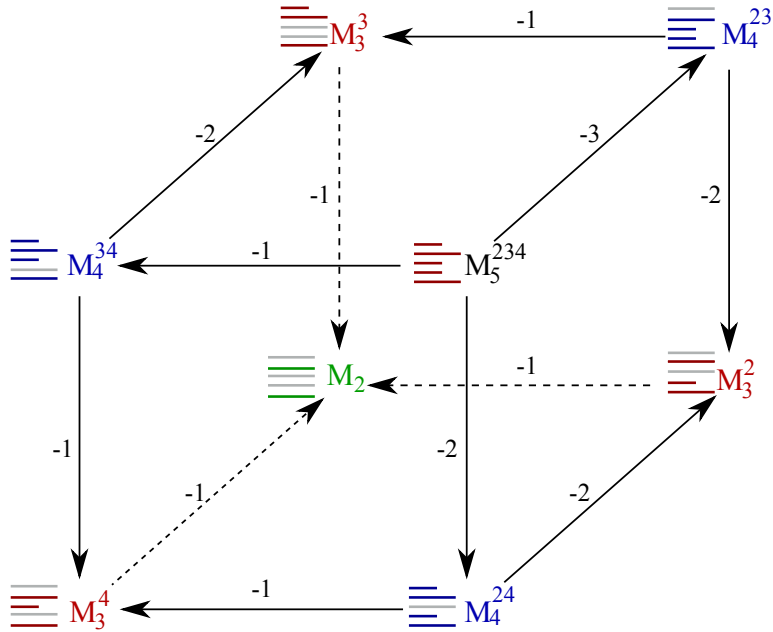}}
\end{center}
\caption{ A $3$-dimensional cube
 in the resolution of $L_2$, corresponding to $M_5^{\{2,3,4\}}$, where label ${\{2,3,4\}}$ describes a diagram in $B_2$ with
$3$ left short arcs and the remaining two larcs shown to the left of the symbol $M_5$.}
\label{SLARCSExactLkMn}
 \end{figure}

To prove the exactness, notice that complex (\ref{SLARCSResLnByMn}) splits into the sum of complexes of vector spaces
\begin{equation*}
  1_nM_n^{{n \choose n-k}} \ra 1_nM_{n-1}^{{n-1 \choose (n-1)-k}}\ra \dots \ra 1_nM_{k+1}^{{k+1 \choose 1}}\ra 1_nM_k
\end{equation*}
for each $n>0.$ In turn, each of these complexes splits into the sum of $(n-k)$-dimensional cubes,
corresponding to diagrams in ${}_nB_{n-k}$ with $k$ larcs, $n-k$ left sarcs and no
right sarcs, containing a copy of the field $\fieldk$ in each vertex.
For example, the resolution of $L_2$ contains a summand of corresponding to $M_5^{\{2,3,4\}}$ represented by a total complex of a $3$-dimensional cube shown on Figure \ref{SLARCSExactLkMn}. Sets labeling the vertices denote positions of short arcs in the corresponding diagrams
shown on the left side of the module symbol. Arrows are labeled with positions of elements which are being removed.

%given by multiplication with an appropriate diagram.
\end{proof}
 Informally, on the level of Grothendieck groups we have the
 following relation:
\begin{eqnarray*}
[L_n]&=& \sum_{k=0}^{\infty} (-1)^{k}{n+k \choose k} [M_{n+k}]\\
     &=& \sum_{k=0}^{\infty} (-1)^{k}{n+k \choose k}(x-1)^{n+k}=
     \frac{(x-1)^n}{x^{n+1}}.
\label{SLARCLnMminformal}
\end{eqnarray*}
We will not try to make sense out of this infinite sum.

In order to obtain projective resolution of a simple module $L_n$ we
construct a bicomplex, see Figure \ref{SLARCSBi},  with a projective resolution (\ref{SLARCSResMnbyPn}) of $M_{n+k}$, $k \geq 0$ lying above each copy of a standard module in
the resolution (\ref{SLARCSResLnByMn}) of $L_n$ by
standard modules $M_m$, $m\ge n$.

\begin{figure}[h]

\tiny{
\[\begin{CD}
@. @VV{d_M}V \\
\ldots @>d_H>>  P_{m-2}^{ {n+m \choose m}{n+m \choose n-2}} \\
@. @VV{d_M}V  @VV{d_M}V\\
 \ldots @>d_H>> P_{m-1}^{ {n+m \choose m}{n+m \choose n-1}}  @>d_H>> \ldots  @>d_H>>   P_0^{ n+1}\\
@. @VV{d_M}V @VV{d_M}V  @VV{d_M}V \\
@>d_H>>  \ldots @>d_H>>  \ldots  @>d_H>> P_1^{(n+1)^2} @>d_H>>  P_0  \\
@. @VV{d_M}V @VV{d_M}V  @VV{d_M}V @VV{d_M}V \\
\ldots @>d_H>>  \ldots @>d_H>> \ldots  @>d_H>> \ldots @>d_H>>  \ldots \\
@. @VV{d_M}V @VV{d_M}V @VV{d_M}V  @VV{d_M}V  \\
\ldots @>d_H>>  P_{n+m-1}^{ {n+m \choose m} {n+m \choose 1}}
@>d_H>>  \ldots   @>d_H>>P_n^{(n+1)^2}  @>d_H>>  P_{n-1}^{ n}  \\
@. @VV{d_M}V @VV{d_M}V   @VV{d_M}V @VV{d_M}V \\
\ldots @>d_H>> P_{n+m}^{ {n+m \choose m}} @>d_H>>\ldots  @>d_H>>  P_{n+1}^{n+1}  @>d_H>>  P_n \\
@. @VV{d_M}V @VV{d_M}V    @VV{d_M}V @VV{d_M}V\\
\ldots @>d_L>> M_{n+m}^{ {n+m \choose m}} @>d_L>> \ldots @>d_L>> M_{n+1}^{n+1}  @>d_L>> M_n @>d_L>> L_n  @>d_L>> 0 \\
 @. @VVV    @VVV   @VVV  @VV{d_M}V \\
0 @>>>   0 @>>>  0 @>>>  0 @>>>  0
\end{CD}\]\\}
\caption{Bicomplex, whose total complex is a projective resolution of $L_n$.}
 \label{SLARCSBi}
\end{figure}

To complete the construction of the bicomplex, we define the horizontal differential denoted by
$d_H$. Each copy of the projective module $P_{n+m-k}$ in the bicomplex shown in Figure \ref{SLARCSBi}
comes with a pair of labels $P_{n+m-k}^{I_{m+n},J_k}$. The first label $I_{n+m}$ is equal to the
label of the standard module $M_{n+m}$ in the resolution of $L_n$, and $J_k$ is the label of $P_{n+m-k}$
in the projective resolution of $M_{n+m}$.

  Horizontal differential
  $d_H:P_{n+m-k}^{ {n+m \choose m} {n+m \choose k}} \lra P_{n+(m-1)+k}^{ {n+m-1 \choose m-1} {n+m-1 \choose k}}$
  is a signed sum of maps $d_{I_{m+n},J_k}$ sending $ a \in P_{n+m-k}^{I_{m+n},J_k}$ to

\begin{equation}
 d_{I_{m+n},J_k}(a)=\displaystyle{\sum_{\begin{smallmatrix}
p=0\\
 i_p \notin J_k
\end{smallmatrix}
}^{n+m}
(-1)^{i_p-1}\, a \, {}^{i_p}b} \in \displaystyle{\bigoplus_{
\begin{smallmatrix}
p=0\\
 i_p \notin J_k
\end{smallmatrix}}^{n+m}}P_{n+m-1-k}^{I_{m+n,-p},J_{k,-p}}
 \label{SLARCSbicomplexDiff}
\end{equation}

where $I_{m+n, -p}$ and $J_{k,-p}$ are defined in (\ref{defIm}).

\begin{figure}[h] \small{
\[\begin{CD}
P_{n+m-(k+1)}^{\oplus \sbinom{n+m}{m} \sbinom{n+m}{k+1}}
@>d_H>>
P_{n+(m-1)-(k+1)}^{\oplus \sbinom{n+m-1}{k+1} \sbinom{n+m-1}{k+1}} \\
@VV{d_M}V @VV{d_M}V \\
 P_{n+m-k}^{\oplus \sbinom{n+m}{m} \sbinom{n+m}{k}}
 @>d_H>>
P_{n+(m-1)-k}^{\oplus \sbinom{n+m-1}{m-1} \sbinom{n+m-1}{k}}  \\
 \end{CD}\]\\}
\caption{An anticommutative square in the bicomplex on Figure \ref{SLARCSBi}.} \label{SLARCSZoomIn}
\end{figure}

\begin{prop}\label{SLARCSbicomplex}
The diagram on Figure \ref{SLARCSBi} is a bicomplex -- all squares are
anticommutative.
\end{prop}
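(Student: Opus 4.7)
The plan is to verify $d_M d_H + d_H d_M = 0$ on each square by a direct computation on a basis diagram, following the template of $d^2 = 0$ from Propositions \ref{SLARCSThmPnMn} and \ref{proofLnByMn}. The only new feature is that the two differentials operate on distinct labeling sets: $d_H$ modifies the $I$-labels (adding a left sarc via ${}^{i_p}b$), whereas $d_M$ modifies the $J$-labels (adding a right sarc via the reduced form of $b_{n+m}^{j_l}$).

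Fix $a \in P_{n+m-(k+1)}^{I_{n+m}, J_{k+1}}$ with $I_{n+m} = \{i_1 < \cdots < i_m\}$ and $J_{k+1} = \{j_1 < \cdots < j_{k+1}\}$. Expanding both composites yields double sums indexed by pairs $(p, l)$ with $i_p \in I_{n+m} \setminus J_{k+1}$ and $j_l \in J_{k+1}$. The geometric core of the argument is the commutation
\[
a \cdot {}^{i_p}b \cdot b^{q_1(l,p)} \;=\; a \cdot b^{q_2(l)} \cdot {}^{i_p}b',
\]
valid because ${}^{i_p}b$ turns the $i_p$-th larc of $a$ into a left sarc while $b^{q(l)}$ turns the $j_l$-th larc into a right sarc; since $i_p \neq j_l$ (forced by $i_p \notin J_{k+1}$), the two operations act on disjoint long arcs and commute at the level of diagrams in $A^-$, up to a predictable renumbering of positions larger than $\min(i_p, j_l)$. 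Hence both composites produce the same unsigned sum of diagrams indexed by the same pairs $(p, l)$.

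It remains to compare signs. The $d_H$-factor $(-1)^{i_p - 1}$ from (\ref{SLARCSbicomplexDiff}) depends only on the ambient position $i_p$ and is unchanged by the order. The $d_M$-factor $(-1)^{l-1}$ depends on the position of $j_l$ within the $J$-set at the moment $d_M$ is applied; under $d_H$, which deletes $i_p$ from $\{1,\ldots,n+m\}$, this position may shift for those $j_l > i_p$. The key claim, which I expect to follow from careful bookkeeping as in the analogous one-variable square of Figure \ref{SLARCSProjResStand}, is that this shift together with the built-in signs produces precisely an overall factor of $-1$ between $d_M d_H$ and $d_H d_M$.

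The main obstacle is exactly this sign bookkeeping: verifying that for each pair $(p, l)$ the reduced diagrams $b^{q_1(l,p)}$ (built from $b_{n+m}^{j_l}$ by deleting long arcs indexed by $J_{k+1} \setminus \{j_l\}$, after $d_H$ has removed $i_p$ from $I_{n+m}$) and $b^{q_2(l)}$ (built analogously before $d_H$) yield identical concatenations with ${}^{i_p}b$ and with $a$, and that the renumbering contribution to the $d_M$-sign produces exactly the required anticommutativity. This is essentially the standard Koszul-square argument extended to two interleaved index sets; once spelled out, the verification is routine.
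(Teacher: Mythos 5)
The paper's own proof is just ``Direct computation, see Figure~\ref{SLARCSZoomIn},'' so in terms of stated approach your proposal is aligned with the paper: verify $d_Hd_M + d_Md_H = 0$ on a basis diagram by comparing the two composites term-by-term. The geometric observation that ${}^{i_p}b$ and the reduced $b^q$ act on disjoint long arcs and hence commute up to renumbering is correct and is the right starting point.

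However, the mechanism you propose for where the overall $-1$ comes from does not hold up, and this is a real gap rather than routine bookkeeping. You expect the sign to arise because $d_H$, by deleting $i_p$ from $\{1,\dots,n+m\}$, shifts the position of $j_l$. But the $d_M$ sign in (\ref{SLARCSResMnbyPn}) is $(-1)^{l-1}$ where $l$ is the \emph{ordinal} position of $j_l$ within the $J$-set; since the re-indexing induced by deleting $i_p$ is order-preserving and $i_p \notin J$, the ordinal position $l$ of $j_l$ is unchanged by $d_H$. Symmetrically, removing $j_l$ from $J$ leaves the absolute value $i_p$ (hence the $d_H$ sign $(-1)^{i_p-1}$ of (\ref{SLARCSbicomplexDiff})) unchanged. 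So, read naively, both composites carry the \emph{same} sign $(-1)^{i_p-1}(-1)^{l-1}$ — giving commutativity, not anticommutativity. The $-1$ therefore has to be located somewhere other than a shift of $l$; most likely it is built into the specific choice $(-1)^{i_p-1}$ (absolute position) for $d_H$ versus $(-1)^{l-1}$ (ordinal position) for $d_M$, and the two conventions interact with the renumbering in a way your sketch has not isolated. A second point your expansion glosses over: the constraint in $d_H$ is $i_p \notin J$, and this set $J$ has \emph{different} sizes in the two composites ($J_{k+1}$ in $d_M d_H$ versus $J_{k+1}\setminus\{j_l\}$ in $d_H d_M$), so the double sums are not a priori indexed by the same pairs $(p,l)$; one must check either that the extra pairs with $i_p = j_l$ contribute zero or that they cancel among themselves. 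Until the actual source of the sign is pinned down and the index-set mismatch is handled, the proof is not complete.
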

\begin{proof}
Direct computation, see Figure \ref{SLARCSZoomIn}.
\end{proof}

 The projective resolution
\begin{equation}
P(L_n): \dots \ra C_{n,t} \ra C_{n,t-1} \ra \dots \ra C_{n,0} \ra L_n \ra 0
\end{equation}
 of the simple module $L_n$ is defined in the following way:
\begin{equation}
 C_{n,t}=\displaystyle{\bigoplus_{\begin{smallmatrix}
 m+k=t \\
 n+m \geq k
\end{smallmatrix}  }} P_{n+m-k}^{{n+m \choose m}{n+m \choose k}}
\label{SLARCSLnByPn}
\end{equation}

The total differential $d_t$ is a sum of the horizontal differential
$d_H$%, defined to obtain a bicomplex
, and the vertical differential
$d_M$ in the projective resolution of standard modules:
\begin{equation*}
  d_t=d_H+d_M.
\end{equation*}

In other words, the resolution (\ref{SLARCSLnByPn}) is the total
complex of the bicomplex in Figure (\ref{SLARCSBi}). Since
each column in the bicomplex is exact, the following proposition holds:

\begin{prop}\label{SLARCSResLnExact}
The chain complex ~(\ref{SLARCSLnByPn}) is exact.
\end{prop}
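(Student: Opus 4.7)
The plan is to apply the standard spectral sequence argument for bicomplexes, using the two exactness results already proved. Let $B$ denote the projective part of the bicomplex in Figure~\ref{SLARCSBi}, namely the sub-bicomplex consisting of all the projective modules $P_{n+m-k}^{\sbinom{n+m}{m}\sbinom{n+m}{k}}$ with differentials $d_M$ and $d_H$, obtained by omitting both the bottom row of standard modules and the term $L_n$. Its total complex is precisely $C_{n,\bullet}$ from~(\ref{SLARCSLnByPn}), since $d_t = d_H + d_M$.

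I will filter $\mathrm{Tot}(B)$ by columns and analyze the resulting spectral sequence. On the $E_0$-page the differential is the vertical $d_M$. Each column of $B$ sits above some standard module $M_{n+i}^{\sbinom{n+i}{i}}$, and augmenting that column by this standard module at the bottom produces, by construction, a direct sum of $\sbinom{n+i}{i}$ copies of the projective resolution~(\ref{SLARCSResMnbyPn}) of $M_{n+i}$. By Proposition~\ref{SLARCSThmPnMn} this resolution is exact, so the column itself (without the $M$-augmentation) has $d_M$-homology concentrated in its lowest degree, where it is canonically isomorphic to $M_{n+i}^{\sbinom{n+i}{i}}$.

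Consequently the $E_1$-page collapses to a single row, consisting of the standard modules $M_{n+i}^{\sbinom{n+i}{i}}$ for $i \geq 0$. The induced horizontal differential on $E_1$ must be identified with $d_L$ from~(\ref{SLARCSResLnByMn}); this follows by direct inspection of formula~(\ref{SLARCSbicomplexDiff}), because the diagrams ${}^{i_p}b$ used to define $d_H$ are exactly the ones used to build $d_L$ in the resolution of $L_n$ by standard modules. Applying Proposition~\ref{proofLnByMn}, the $E_1$-row is a complex whose only nonzero homology is $L_n$ in total degree $0$. Thus $E_2 = E_\infty$ is concentrated in a single bidegree and equals $L_n$, which yields $H_0(\mathrm{Tot}(B)) \cong L_n$ and $H_t(\mathrm{Tot}(B)) = 0$ for $t > 0$; this is exactly the exactness of~(\ref{SLARCSLnByPn}).

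I expect the main technical step to be the identification of the $E_1$-differential with $d_L$, in particular a careful sign reconciliation between the $(-1)^{i_p-1}$ appearing in~(\ref{SLARCSbicomplexDiff}) and the $(-1)^l$ appearing in the definition of $d_L$ in~(\ref{SLARCSResLnByMn}). Convergence causes no trouble: the bicomplex is first-quadrant, and each total degree $C_{n,t}$ is a finite direct sum thanks to the constraint $n+m \geq k$ in~(\ref{SLARCSLnByPn}), so the spectral sequence collapses after finitely many pages in each bidegree.
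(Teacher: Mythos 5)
Your proof is correct and is essentially the argument the paper has in mind: the paper simply asserts that exactness of the columns (Proposition~\ref{SLARCSThmPnMn}) together with exactness of the bottom row (Proposition~\ref{proofLnByMn}) forces the total complex to resolve $L_n$, and your spectral-sequence treatment, filtering by columns and collapsing at $E_2$, is the standard way to make that assertion precise. The one step you flag but do not fully carry out -- identifying the induced $E_1$-differential with $d_L$ up to signs -- is exactly the content the paper buries in the anticommutativity of the bicomplex (Proposition~\ref{SLARCSbicomplex}) together with the compatibility of $d_M$ with the augmentations $P_{n+m} \twoheadrightarrow M_{n+m}$; since any sign discrepancy would give a complex isomorphic to the $(M_\bullet, d_L)$ row and hence with the same homology, your conclusion stands.
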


\begin{prop}\label{SLARCSDimLn}
Simple modules $L_n$ over slarc algebra $A^-$ have infinite
homological dimension.
\end{prop}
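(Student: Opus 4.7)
The plan is to compute $\mathrm{Ext}^i(L_n, L_n)$ directly using the projective resolution $P(L_n) = C_{n,\bullet}$ constructed as the total complex of the bicomplex in Figure~\ref{SLARCSBi}, and to show that it is nonzero for infinitely many $i$. By Proposition~\ref{SLARCSResLnExact} this resolution is exact, so $\mathrm{Ext}^i(L_n, L_n) = H^i\bigl(\mathrm{Hom}_{A^-}(C_{n,\bullet}, L_n)\bigr)$. Since $L_n = \fieldk 1_n$ is one-dimensional and $\mathrm{Hom}_{A^-}(P_j, L_n) = 1_j L_n$, we have $\mathrm{Hom}(P_j, L_n) \cong \fieldk$ if $j=n$ and $0$ otherwise. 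Consequently only the summands of $C_{n,t}$ of the form $P_n^{?}$ survive after applying $\mathrm{Hom}(-, L_n)$.

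Next I would analyze the differentials $d_H$ and $d_M$ in the bicomplex and observe that each one shifts the projective index $n+m-k$ by exactly one. Concretely, $d_M$ is the differential in the projective resolution of $M_{n+m}$, which by the construction in Proposition~\ref{SLARCSThmPnMn} takes $P_{n+m-k}$ to $P_{n+m-k+1}$ (right multiplication by a right-sarc diagram $b^p$). Similarly, $d_H$ as defined by~(\ref{SLARCSbicomplexDiff}) is right multiplication by a left-sarc diagram $^{i_p}b$, which decreases the number of right endpoints by one and therefore takes $P_{n+m-k}$ to $P_{n+m-k-1}$. Hence the image of any $P_n$-summand under $d_t = d_H + d_M$ lies in $P_{n+1}$- and $P_{n-1}$-summands, and the induced differential on the cochain complex $\mathrm{Hom}(C_{n,\bullet}, L_n)$ is identically zero.

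Therefore $\mathrm{Ext}^t(L_n, L_n) \cong \mathrm{Hom}(C_{n,t}, L_n)$ equals the $\fieldk$-vector space spanned by the $P_n$-summands of $C_{n,t}$. From the definition~(\ref{SLARCSLnByPn}), such summands occur precisely for pairs $(m,k)$ with $m+k=t$ and $n+m-k=n$, i.e.\ $m=k=t/2$. Thus $\mathrm{Ext}^{t}(L_n, L_n) = 0$ for $t$ odd, while for $t = 2s$ it has dimension $\binom{n+s}{s}^2$, which is strictly positive for every $s \geq 0$. Since $\mathrm{Ext}^{2s}(L_n, L_n) \neq 0$ for arbitrarily large $s$, the module $L_n$ cannot have a projective resolution of finite length, proving the proposition.

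The main obstacle is really a bookkeeping one: verifying that both $d_H$ and $d_M$ genuinely shift the projective index by $\pm 1$, because this is what forces the Hom-differential to vanish. Once this is confirmed by reading off the concatenation rules in Figure~\ref{SLARCSBasicD}, the Ext computation is immediate and the conclusion follows with no further work.
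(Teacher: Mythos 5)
Your proof is correct and follows essentially the same strategy as the paper's: compute $\mathrm{Ext}$ of $L_n$ against a one-dimensional simple module using the bicomplex-derived projective resolution $P(L_n)$, observe that the induced differential on the Hom complex vanishes, and read off nonvanishing Ext groups in arbitrarily high degree. The only difference is the choice of test module: you use $L_n$, getting $\mathrm{Ext}^{2s}(L_n,L_n)\cong\fieldk^{\binom{n+s}{s}^2}$ at $m=k=s$, whereas the paper uses $L_0$, picking out the summand with $n+m-k=0$ so that $m=(t-n)/2$, $k=(t+n)/2$ and the Ext group is $\fieldk^{\binom{(t+n)/2}{(t-n)/2}}$ for $t\geq n$, $t\equiv n\pmod 2$. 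Both choices work; yours is arguably a bit cleaner since $\mathrm{Ext}^*(L_n,L_n)$ is the more natural invariant, and the nonvanishing degrees are simply the even integers independent of $n$. Your justification that the Hom-differential is zero (both $d_H$ and $d_M$ shift the index $n+m-k$ by exactly $\pm 1$, so $P_n$-summands map into $P_{n\pm 1}$-summands which are killed by $\mathrm{Hom}(-,L_n)$) is also correct and is really just an explicit unpacking of the parity observation the paper uses: all summands $P_i$ of $C_{n,t}$ satisfy $i\equiv n+t\pmod 2$, so the differential necessarily changes parity of the projective index.
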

\begin{proof}
Based on the resolution by projective  modules
~(\ref{SLARCSLnByPn}), it is sufficient to show that $Ext^i(L_n,M)$
is nontrivial for arbitrarily large $i \in \mathbb{N}$ and some
$A^- \dmod$ module $M$. Recall that
\begin{equation*}
  \Hom(P_i,L_m)=\left\{%
\begin{array}{ll}
    \mathbf{k}, & \hbox{$m=i$;} \\
    0, & \hbox{otherwise.} \\
\end{array}%
\right.
\end{equation*}

$C_{n,t}$ contains all $P_i$  for ${\rm max}(0,n-t)\leq i <n+t$ such that $n+t-i \equiv 0$ (mod $2$).
Let $M=L_0$ and notice that $P_0 \in C_{n,t}$ for every $t \geq n$ such that $n+t$ is even.
 Hence, the chain complex built out of homomorphism spaces $\Hom(C_{n,t},L_0)$ with the
 differential induced from the resolution reduces to the infinite cochain complex
 having trivial groups in odd degrees and non-trivial groups in even degrees for $t \geq n:$

$$Ext^{n+t}(L_n,L_0) \cong \Hom(C_{n,t},L_0) \cong\left\{%
\begin{array}{ll}
    \mathbf{k}, & \hbox{$t=n$;} \\
    \mathbf{k}^{\sbinom{\tfrac{t+n}{2}}{\tfrac{t-n}{2}}}, & \hbox{$t+n$ \, even, \, $t>n$.} \\
\end{array}%
\right. $$
%*************** explanation for Ext^{n+m}
% Depending on how P_0 is obtained. If m=n or
% if it is obtained for k=(m+n)/2
%*******************************
Therefore, $Ext^{n+t}(L_n,L_0)$ is non-trivial for arbitrarily
large $t>n$ such that $n+t$ is even.
\end{proof}
Slarc algebra $A^-$ can be viewed as a graded algebra with the
grading defined by the total number of sarcs in a diagram. In
particular, if we regard ~(\ref{SLARCSLnByPn}) as the
graded resolution, the differential is increasing the degree by
$1$.

\begin{cor}\label{SLARCSKoszul}
The algebra of slarcs $A^-$ is Koszul.
\end{cor}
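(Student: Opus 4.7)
The plan is to exhibit, for each simple module $L_n$, a \emph{linear} graded projective resolution; together with the fact that $A^-_0 = \bigoplus_{n \ge 0} \fieldk \cdot 1_n$ is semisimple (the zero-sarc subspace is spanned by the orthogonal idempotents $1_n$) and that each $L_n = \fieldk \cdot 1_n$ is concentrated in degree $0$, this is the defining property of a Koszul algebra (with an idempotented unit).

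The candidate resolution is $P(L_n)$ from equation \eqref{SLARCSLnByPn}, whose exactness was established in Proposition \ref{SLARCSResLnExact}. By construction,
\[
C_{n,t} \;=\; \bigoplus_{\substack{m+k=t\\ n+m\ge k}} P_{n+m-k}^{\binom{n+m}{m}\binom{n+m}{k}},
\]
and I would grade each direct summand by shifting the natural grading of $P_{n+m-k}$ (in which $1_{n+m-k}$ has degree $0$ and a general diagram has degree equal to its sarc count) upward by $t$, so that every generator of $C_{n,t}$ is placed in internal degree $t$. With this convention it suffices to verify that the total differential $d_t = d_H + d_M$ is homogeneous of internal degree $0$ when viewed as a map $C_{n,t}\to C_{n,t-1}$.

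Homogeneity is immediate from the explicit formulas: both $d_M$ (defined before Proposition \ref{SLARCSThmPnMn}) and $d_H$ (defined in \eqref{SLARCSbicomplexDiff}) are signed sums of right multiplications by diagrams of the form $b^{p}_{\bullet}$ or ${}^{i_p}b_{\bullet}$, each of which contains exactly one sarc. Hence each summand of $d_t$ raises the sarc-grading by exactly one; combined with the drop by one in the grading shift between $C_{n,t}$ and $C_{n,t-1}$, the map $d_t$ is degree-preserving with respect to the shifted gradings. Thus $P(L_n)$ is a graded projective resolution in which $C_{n,t}$ is generated in internal degree $t$, i.e.\ a linear resolution of $L_n$; since this holds for every simple, $A^-$ is Koszul.

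The only delicate point is bookkeeping: one must confirm that the shifts are consistent across the bicomplex, so that a copy of $P_{n+m-k}$ reached by traveling $m$ steps along $d_H$ and $k$ steps along $d_M$ lands in the same shifted grading regardless of the path. This holds because both directions contribute $+1$ to the internal degree per step and $-1$ to the homological degree per step, so the only invariant of the final position is $m+k=t$. Hence the grading is globally well-defined, and the preceding degree-preservation argument applies uniformly, completing the proof.
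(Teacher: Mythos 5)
Your proposal is correct and follows essentially the same approach as the paper: grade $A^-$ by sarc count, observe that $A^-_0 = \bigoplus_n \fieldk\, 1_n$ is semisimple with simples $L_n$, and note that the total differential of the bicomplex resolution $P(L_n)$ is right multiplication by single-sarc diagrams, so each term $C_{n,t}$ is generated in internal degree $t$. The paper states this in a single remark preceding the corollary; you have supplied the bookkeeping (path-independence of the grading shift across the bicomplex) that the paper leaves implicit.
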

\section{Functors}
\vspace{0.5cm}
\begin{center}\textbf{Approximations of the identity}
\end{center}
\vspace{0.5cm}

Recall that  $B^-(\leq k)={\displaystyle \bigsqcup_{i=0}^k B^-(i)}$ denotes
diagrams in $B^-$ of width less than or equal to $k$. Let $A^-
(\leq k)$, $k \geq 0$ denote the subspace of $A^-$ spanned by
diagrams in $B^- ( \leq k)$. This subspace is an $A^-$--subbimodule of $A^-$.
 Let $A^-(k)$ be the quotient subbimodule
$A^-(\leq k)/ A^-(\leq k-1)$. Let ${}_nP$ denote a right
projective module ${}_nP=1_nA^-$ and, analogously to the standard
modules $M_n$, let ${}_nM$ be the quotient of ${}_nP$ by the submodule spanned by
all diagrams with a left sarcs. One can think of diagrams of
${}_nM$ as reflections along vertical axis of diagrams in $M_n.$
\begin{figure}[h]
\begin{center}
\scalebox{.8}{
\includegraphics{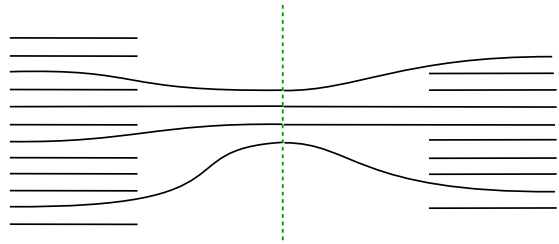}}
\caption{Diagram in $B^-(4)$ viewed as a product of elements in
$M_{4}$ and ${}_{4}M$.}
 \label{SLARCSBasisNk}\end{center}
\end{figure}

\begin{prop}
  $A^-(\leq k)/A^-(\leq k-1) \cong M_k \otimes_{\mathbf{k}} {}_kM$ as $A^-$-bimodules (Figure  \ref{SLARCSBasisNk}).
\end{prop}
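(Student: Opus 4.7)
The plan is to define a bimodule homomorphism $\phi : M_k \otimes_{\mathbf{k}} {}_kM \to A^-(\leq k)/A^-(\leq k-1)$ by multiplication, and to show it is an isomorphism by exhibiting a basis-to-basis correspondence. On basis elements, a diagram $a \in M_k$ is represented by some $a \in {}_mB^-_k$ with $k$ larcs and $m-k$ left sarcs (no right sarcs), and a diagram $b \in {}_kM$ is represented by some $b \in {}_kB^-_n$ with $k$ larcs and $n-k$ right sarcs (no left sarcs); I would set $\phi(a \otimes b) := a\cdot b \pmod{A^-(\leq k-1)}$, the concatenation inside $A^-$.

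First I would verify that $\phi$ is well-defined. An element $a' \in P_k$ projects to $0$ in $M_k$ iff it is a combination of basis diagrams each having at least one right sarc. For such a basis diagram $a'$ and any basis $b$ of ${}_kM$ (all of whose left endpoints are larc endpoints), the product $a' \cdot b$ has a right sarc of $a'$ joining a left larc of $b$, which produces a sarc in the concatenation; hence the number of larcs drops by at least one, so $a' \cdot b \in A^-(\leq k-1)$. The symmetric statement handles the right factor, so $\phi$ descends to the tensor product. The bimodule property then follows automatically, since $\phi$ is literally a restriction of the multiplication map of $A^-$: for $c \in A^-$ we have $\phi((c\cdot a) \otimes b) = (c\cdot a)\cdot b = c\cdot(a\cdot b) = c \cdot \phi(a\otimes b)$, and analogously on the right.

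Next I would prove bijectivity by constructing an inverse on basis vectors. A diagram of width exactly $k$ in ${}_mB^-_n$ has $m-k$ left sarcs, $n-k$ right sarcs, and $k$ larcs connecting prescribed subsets of the left and right points (the matching is forced by the no-critical-points condition, so the diagram is determined by the two subsets). Cutting such a diagram by a vertical line that passes to the right of every left sarc and to the left of every right sarc yields a pair $(a,b)$ where $a \in {}_mB^-_k$ is a basis diagram of $M_k$ and $b \in {}_kB^-_n$ is a basis diagram of ${}_kM$, and the resulting map on bases is inverse to $\phi$. Counting checks out: the number of width-$k$ diagrams in ${}_mB^-_n$ is $\binom{m}{k}\binom{n}{k}$, matching $\dim(1_m M_k) \cdot \dim({}_kM \, 1_n)$.

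The main technical point is the well-definedness claim above, which rests on the observation that multiplying a diagram with a right sarc by a diagram with no left sarcs strictly decreases the width; the rest of the argument is essentially bookkeeping of the natural bases, so no real obstacle should arise.
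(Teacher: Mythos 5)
Your proof is correct and takes the approach the paper implicitly suggests via Figure \ref{SLARCSBasisNk}; the paper states the proposition without a written proof, leaving exactly this verification (define the multiplication map, check it kills right-sarc and left-sarc contributions because composition cannot increase width, and invert on basis diagrams by cutting at a vertical line separating left sarcs from right sarcs). One small clarification worth making explicit: for the left-module compatibility you in fact need that $a'\cdot b$ has width $<k$ whenever $a'\in P_k$ has a right sarc and $b\in{}_kP$ is \emph{arbitrary} (not only $b\in{}_kM$), but this follows immediately since the width of a product is at most the width of either factor; with that remark your argument is complete.
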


For a given $k \geq 0$ , define a right exact functor $F_k: A^-\dmod \ra A^-\dmod$ by
$$F_k(M)=A^-(\leq k) \otimes_{A^-} M,$$
for an $A^-$-module $M.$
The image of the standard module $M_m$ under functor $F_k$ is:
\begin{equation}
A^-(\leq k)\otimes_{A^-} M_m=\left\{%
\begin{array}{ll}
    M_m, & \hbox{if $k \geq m$;} \\
    0, & \hbox{otherwise.} \\
\end{array}%
\right.
\end{equation}
By definition $P_m=A^-1_m$, hence $A^-(\leq k)\otimes_{A^-}P_m=A^-(\leq
k)\otimes_{A^-}A^-1_m=A^-(\leq k)1_m$, and this is a submodule of
$P_m$ spanned by diagrams of width less than or equal to $k$:

\begin{equation}\label{SLARCSIdPn}
F_k(P_m)=A^-(\leq k)\otimes_{A^-}P_m=\left\{%
\begin{array}{ll}
    P_m, & \hbox{if $k \geq m$;} \\
    P_m(\leq k), & \hbox{if $k<m$.} \\
\end{array}%
\right.
\end{equation}

Recall that in the Grothendieck group, projective modules $P_n$
correspond to $x^n$ and standard modules $M_n$ to $(x-1)^n$. Modules $P_n(\leq k)$
 have finite homological dimension, since they admit finite filtrations with
 successive quotients isomorphic to standard modules.
Therefore, functor $F_k$ descends to an operator on the Grothendieck group $K_0(A^-)$, denoted by $[F_k]$.
The action of $[F_k]$  on $[P_n]=\displaystyle{\sum_{m=0}^n \sbinom{n}{m}[M_m]}$ is equal to:

\begin{equation}
[F_k][P_n]=\left\{%
\begin{array}{ll}
    [P_n]=x^n, & \hbox{if $k \geq n$;} \\
   \displaystyle{ \sum_{m=0}^k \sbinom{n}{m}[M_m]=\sum_{m=0}^k \sbinom{n}{m}(x-1)^m}, & \hbox{if $k<n$.} \\
\end{array}%
\right.
\end{equation}

In other words, for  $k \geq n$ operator $[F_k]$ acts via
identity on $[P_n]$, and for $k<n$ it approximates identity and can be
viewed as taking the first $k+1$ terms $\displaystyle{\sum_{m=0}^k \sbinom{n}{m}[M_m]}$
in the expansion of $[P_n]$ in the basis $\{(x-1)^m \}_{m \geq 0}.$

\begin{proposition}
  Higher derived functors of the functor $F_k$ applied to a standard module are zero:
  \begin{equation*}
    L^i F_k(M_n)=\left\{
                   \begin{array}{ll}
                     M_n, & \hbox{$i=0, k\geq n$;} \\
                     $0$, & \hbox{{\rm otherwise}.}
                   \end{array}
                 \right.
     \end{equation*}
\end{proposition}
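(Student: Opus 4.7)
The plan is to compute $L^i F_k(M_n) = \mathrm{Tor}_i^{A^-}(A^-(\leq k), M_n)$ by applying $F_k$ to the projective resolution~(\ref{SLARCSResMnbyPn}) of $M_n$. By~(\ref{SLARCSIdPn}), $F_k$ replaces each $P_{n-m}$ in this resolution with itself when $n-m \leq k$ and with the submodule $P_{n-m}(\leq k)$ of diagrams of width at most $k$ otherwise, so $F_k(P^\bullet)$ is precisely the width-$\leq k$ subcomplex of the resolution. When $k \geq n$, every index satisfies $n-m \leq n \leq k$, the complex is unchanged, and it computes $L^0 F_k(M_n) = M_n$ and $L^i F_k(M_n) = 0$ for $i > 0$.

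For $k < n$ I would reuse the cube decomposition from the proof of Proposition~\ref{SLARCSThmPnMn}. To each core diagram $\widetilde{d} \in P_n$ with right-sarc positions $S \subseteq \{1,\ldots,n\}$ and width $w = n - |S|$ one associates a subcomplex of $P^\bullet$ indexed by subsets $I \subseteq S$: the subset $I$ singles out the basis element of the summand $P_{n-|I|}^{I}$ obtained from $\widetilde{d}$ by deleting the right sarcs at the positions in $I$, and the restricted differentials form the total complex of an $|S|$-dimensional cube with $\mathbf{k}$ at each vertex and $\pm 1$ on every edge. The crucial observation is that inserting or deleting a right sarc does not change the number of larcs, so every basis element inside one cube shares the width $w$; consequently $F_k$ acts cube by cube, keeping the cube of $\widetilde{d}$ when $w \leq k$ and discarding it when $w > k$.

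Every cube of width $w < n$ is a tensor product of $n - w > 0$ contractible two-term complexes, so it is already exact inside $P^\bullet$, and its $I = \emptyset$ vertex dies in $M_n$ because $\widetilde{d}$ carries a right sarc. The cubes that survive to $M_n$ are precisely the $0$-dimensional cubes with $w = n$, and for $k < n$ these are exactly the cubes that $F_k$ throws away. Hence $F_k(P^\bullet)$ is a direct sum of exact cubes, all higher $\mathrm{Tor}$'s vanish, and $F_k(M_n) = 0$. The main obstacle is setting up the cube decomposition with enough care that width-invariance within each cube is manifest; once that is done, the rest of the argument is purely formal.
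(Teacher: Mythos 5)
Your proposal is correct and follows essentially the same route as the paper: apply $F_k$ to the projective resolution~(\ref{SLARCSResMnbyPn}), observe that the differential preserves width so the complex splits by width, and note that for $k<n$ the functor retains only the exact width-$\leq k$ summands. The only difference is presentational: where the paper simply asserts that the width-$w$ pieces for $w<n$ are exact, you justify this by unpacking the cube decomposition already used in the proof of Proposition~\ref{SLARCSThmPnMn}.
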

\begin{proof}
Projective resolution $P(M_n)$ has the form (\ref{SLARCSResMnbyPn}):
\begin{equation}\label{last}
0\ra P_0 \ra \dots \ra P_{n-m}^{\sbinom{n}{m}} \ra \dots \ra
P_{n-2}^{\sbinom{n}{2}}\ra P_{n-1}^{\sbinom{n}{1}} \ra P_n \ra 0
\end{equation}
Terms in this resolution are multiples of projective modules $P_m$, for $m
\leq n.$ Based on (\ref{SLARCSIdPn}), if $k \geq n$, $F_k$ acts as identity
on the resolution, implying the proposition in this case. Assume now that $k<n$.
The differential in (\ref{SLARCSResMnbyPn}) applied to a diagram in any $P_{n-m}$
preserves the width of the diagram, and (\ref{SLARCSResMnbyPn}) splits, as a complex
of vector spaces, into a direct sum of complexes over all widths from $0$ to $n$.
These complexes are exact unless the width is exactly $n$; in the latter case the
summand is isomorphic to $0 \ra M_n \ra 0.$

Applying $F_k$ to the resolution (\ref{last}) produces the complex
\begin{equation}
0\ra P_0 \ra \dots \ra P_{n-m}^{\sbinom{n}{m}}(\leq k) \ra \dots \ra
P_{n-2}^{\sbinom{n}{2}}(\leq k) \ra P_{n-1}^{\sbinom{n}{1}}(\leq k) \ra P_n (\leq k)
\ra 0
\end{equation}
which is exact for  $k \leq n$, being a direct sum of exact complexes over all widths from $0$ to $k.$
\end{proof}

%\subsection{Restriction, induction, and cabling funtors}

\vspace{1cm}
 \begin{center}\textbf{Restriction and induction functors and what they categorify}
\end{center}
\vspace{0.5cm}

For a unital inclusion $\iota: B \hookrightarrow A$ of arbitrary rings the
induction functor
$$Ind: B-mod \ra A-mod$$
given by $Ind(M)=A \otimes_B M$ is left adjoint to the restriction functor,
$$ \Hom_A(Ind(M),N) \cong \Hom_B(M,Res(N)). $$
If the inclusion is non-unital, i.e., $\iota$ takes the unit
element of $B$ to an idempotent $e\not= 1$ of $A$, the restriction
functor needs to be redefined: to an $A$-module $N$ assign an
$eAe$-module $eN$ and then restrict the action to $B$. The
induction functor is defined as before, but now
$$ Ind(M)=A\otimes_B M
\cong (Ae \otimes_B M) \oplus (A(1-e)\otimes_B M) =
Ae \otimes_B M, $$ and the induction is still left adjoint to the
restriction. A similar construction works for non-unital $B$ and
$A$ equipped with systems of idempotents.

We now specialize to slarc algebra  $A^-$ and the inclusion
$\iota: A^- \hookrightarrow A^-$ induced by adding a straight
through line at the top of every diagram, i.e. diagram $d \in {} _m
B_n$ goes to $ \iota(d) \in{}_{m+1} B^-_{n+1}$. In particular, the
system of idempotents $\{1_n\}_{n \geq 0}$ goes to $\{1_{n+1}\}_{n
\geq 0}$ missing $1_0$. This inclusion $\iota$ gives rise to both
induction and restriction functors, with
\begin{eqnarray}
  Ind(N) &\cong& A^- \otimes_{\iota(A^-)} N \\
  Res(N) &\cong& N/1_0 N\cong \oplusop{k>0}1_k\,N {\rm \,\,with \,algebra\, } A^- {\rm \, acting \, on \, the \, left \, via \,}
\iota.
\end{eqnarray}

  In particular,  $1_{n-1}Res(M) \cong 1_nM$.

Notice that for simple modules
\begin{equation*}
  Res (L_n)=\left\{
                     \begin{array}{ll}
                       L_{n-1}, & \hbox{if $n> 0$;} \\
                       0, & \hbox{$n=0$,}
                     \end{array}
                   \right.
\end{equation*}

while $Ind(L_{n})$ is an infinite-dimensional module
such that
\begin{equation*}
  1_m (Ind(L_{n}))=\left\{
                     \begin{array}{ll}
                       \fieldk, & \hbox{if $m > n$;} \\
                       0, & \hbox{otherwise.}
                     \end{array}
                   \right.
\end{equation*}

\begin{figure}[h]
\begin{center}
\scalebox{.8}{
\includegraphics{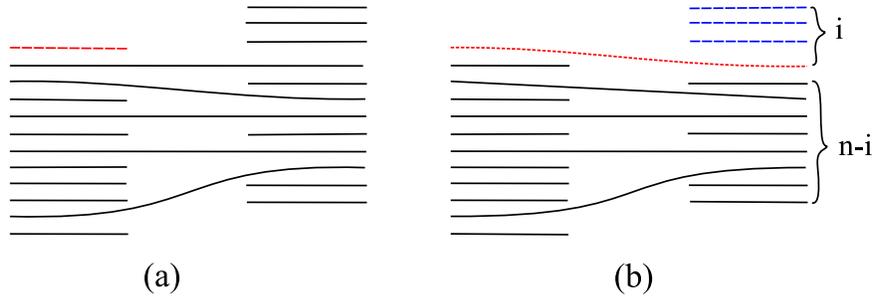}}
\caption{Decomposition of the module $P_n$ as a sum of vector
spaces spanned by diagrams of type (a) where left sarc is attached
to the top left point and type (b) where the top left point is
connected by larc to the i-th point on the right. In particular,
diagram in (a) is an element of $P_{12}^{\emptyset}$ and (b) belongs to $P_{12}^{(i)}$. }
 \label{SLARCSResPnBase}\end{center}
\end{figure}

\begin{prop}$Res(M_n) \cong M_{n} \oplus M_{n-1}$
for $n > 0$, and $Res (M_0) \cong M_0$.
\end{prop}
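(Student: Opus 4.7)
The plan is to decompose $\mathrm{Res}(M_n) \cong \bigoplus_{k>0} 1_k M_n$, with its shifted action $a \cdot m := \iota(a) m$, according to the behavior of the topmost left endpoint of a basis diagram. A basis element of $1_{\ell+1} M_n$ has $\ell+1$ left points, $n$ right points, $n$ larcs and $\ell+1-n$ left sarcs. Since larcs are non-crossing and right sarcs are forbidden, the topmost left point is either (A) the endpoint of a left sarc, or (B) the left endpoint of a larc, in which case planarity forces the other end to be the topmost right point. Let $N_1, N_2 \subset M_n$ be the subspaces spanned by basis elements of types (A) and (B) respectively; then $M_n = N_1 \oplus N_2$ as vector spaces, and for $n>0$ we have $\mathrm{Res}(M_n) = M_n$ since $1_0 M_n = 0$.

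The key structural observation is that both $N_1$ and $N_2$ are stable under the shifted action. The product $\iota(a) \cdot b$ glues the top straight-through strand of $\iota(a)$ to whatever arc is attached to the top-left point of $b$: in case (A) the straight strand extends the top sarc to remain a sarc, while in case (B) it extends the top larc so it still terminates at the topmost right point. I would then define linear maps
\[
\phi_1 : N_1 \stackrel{\cong}{\longrightarrow} M_n, \qquad \phi_2 : N_2 \stackrel{\cong}{\longrightarrow} M_{n-1},
\]
by erasing, respectively, the top-left sarc and the top-left-to-top-right larc. On bases these send the $\binom{\ell}{n}$ type-(A) diagrams in $1_{\ell+1}M_n$ bijectively onto a basis of $1_\ell M_n$, and the $\binom{\ell}{n-1}$ type-(B) diagrams bijectively onto a basis of $1_\ell M_{n-1}$, matching $\dim 1_\ell \mathrm{Res}(M_n) = \binom{\ell+1}{n}$ via Pascal's identity.

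Equivariance of $\phi_1$ and $\phi_2$ with respect to the shifted action versus the ordinary $A^-$-action is a direct diagrammatic check: the top straight strand of $\iota(a)$ interacts with $b$ only through the top-left arc that $\phi_i$ discards, so erasing the top row of $\iota(a) \cdot b$ yields precisely $a \cdot \phi_i(b)$. For $n=0$ there are no larcs, so $N_2 = 0$ and every diagram is of type (A) except the empty diagram in $1_0 M_0$, which is exactly what is killed by the quotient $M_0/1_0 M_0 = \mathrm{Res}(M_0)$; thus $\phi_1$ alone delivers $\mathrm{Res}(M_0) \cong M_0$. The main obstacle is purely bookkeeping, namely keeping the idempotent indices aligned under the shift $\iota(1_k) = 1_{k+1}$; once the decomposition $M_n = N_1 \oplus N_2$ is in place, both equivariance statements reduce to the fact that attaching or removing a single strand at the very top of a diagram commutes with every diagrammatic operation occurring below.
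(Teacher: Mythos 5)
Your proof is correct and uses essentially the same decomposition as the paper: split $\mathrm{Res}(M_n)$ according to whether the topmost left endpoint belongs to a left sarc or a larc, observe each piece is stable under the shifted action, and erase the top strand to identify the pieces with $M_n$ and $M_{n-1}$ respectively. You have the correspondence oriented correctly (sarc-at-top erases to $M_n$, larc-at-top erases to $M_{n-1}$, matching $\binom{\ell}{n}+\binom{\ell}{n-1}=\binom{\ell+1}{n}$), whereas the paper's phrasing appears to swap the labels $M_n^L$ and $M_n^{\emptyset}$ in the final line; the substance of the two arguments is the same.
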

\begin{proof}
Let $M_n^L$ and $M_n^{\emptyset}$ denote spans
 of diagrams in $M_n$ with the top left point being a part of a left sarc
or a larc, respectively (diagrams in Figure  \ref{SLARCSResPnBase} can be treated as elements
of standard modules if we delete right returns). Then $Res(M_n) \cong M_n^L \oplus
M_n^{\emptyset}$ as left $A^-$-modules. Furthermore, $M_n^{\emptyset}\cong M_n$
%since fixed top left sarc does not affect the structure of $M_n$. Likewise,
and  $M_n^L \cong M_{n-1}.$
\end{proof}

\begin{prop}
$Res(P_n) \cong \displaystyle{\oplusoop{k=0}n}P_k$ for all $n \geq 0.$
\end{prop}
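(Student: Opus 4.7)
The plan is to decompose $\mathrm{Res}(P_n)$ according to the combinatorial type of the topmost left endpoint of each basis diagram, verify that the pieces are $A^-$-submodules under the $\iota$-action, and identify each piece with some $P_k$. Concretely, any basis diagram $d \in 1_k P_n$ with $k \geq 1$ is of exactly one of the following types: (a) the topmost left endpoint is the end of a left sarc, or (b) the topmost left endpoint is the end of a larc connecting to the $i$-th (counting from the bottom) right endpoint, for some $i \in \{1,\ldots,n\}$. Writing $P_n^{\emptyset}$ for the span of diagrams of type (a) and $P_n^{(i)}$ for the span of type (b) with index $i$, these cases are mutually exclusive and exhaustive, so as vector spaces
\[
\mathrm{Res}(P_n) \;=\; P_n^{\emptyset} \;\oplus\; \bigoplus_{i=1}^{n} P_n^{(i)}.
\]

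Next I would verify that each summand is preserved by the $\iota$-action. Since $\iota$ attaches a single horizontal through-strand at the top of a diagram, when we form $\iota(a)\cdot d$ the topmost strand of $\iota(a)$ concatenates with the topmost left endpoint of $d$, merely extending whatever feature is already attached there without creating or destroying sarcs or reconnecting any larcs. Thus a left sarc remains a left sarc, and a larc terminating at the $i$-th right endpoint remains a larc terminating at that same endpoint; hence $P_n^{\emptyset}$ and each $P_n^{(i)}$ are $A^-$-submodules of $\mathrm{Res}(P_n)$.

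For the identification, the map that erases the top-left sarc sets up a bijection $P_n^{\emptyset} \xrightarrow{\sim} P_n$ on bases; it is $\iota$-equivariant because on the left-hand side the through-strand added by $\iota$ simply replaces the erased sarc. For $P_n^{(i)}$, the non-crossing property of larcs forces all $n-i$ right endpoints above position $i$ to be right sarcs (no left endpoint survives above the top-left to be joined to them), so all variation is confined to the portion below the top-larc, which is an arbitrary diagram with $i-1$ right endpoints on $x=1$. Forgetting the fixed top-larc together with the $n-i$ obligatory right sarcs above it yields an $\iota$-equivariant bijection with $B^-_{i-1}$, so $P_n^{(i)} \cong P_{i-1}$. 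Assembling these isomorphisms gives
\[
\mathrm{Res}(P_n) \;\cong\; P_n \;\oplus\; \bigoplus_{i=1}^{n} P_{i-1} \;=\; \bigoplus_{k=0}^{n} P_k,
\]
which is the required statement.

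The main obstacle is a careful bookkeeping check of equivariance in case (b): one must confirm that the $\iota$-action never produces a floating arc or otherwise mixes up the fixed top larc with the part of the diagram below it. This is essentially automatic, since the topmost left endpoint of $d$ is attached to the unique top strand of $\iota(a)$ and no new endpoints are introduced in the interior of the strip; but this is the point where writing out one product explicitly is worthwhile to be sure no case is overlooked.
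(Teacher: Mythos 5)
Your decomposition into $P_n^{\emptyset}$ and the $P_n^{(i)}$ by the attachment type of the topmost left endpoint is exactly the paper's argument; the only difference is that you index the right endpoints from the bottom (giving $P_n^{(i)}\cong P_{i-1}$) while the paper indexes from the top (giving $P_n^{(i)}\cong P_{n-i}$), which is an immaterial relabeling of the same sum. Your added equivariance checks are correct and simply make explicit what the paper leaves to the figures.
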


\begin{figure}[h]
\begin{center}
\scalebox{.8}{
\includegraphics{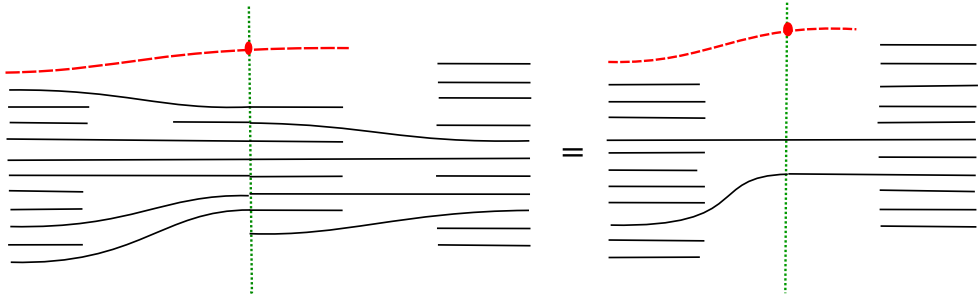}}
\caption{$P^{\emptyset}$ is isomorphic to projective module $P_n.$ }
 \label{SLARCSResPnAction1}
 \end{center} \end{figure}

\begin{proof}
For each $i \geq 1$, let $P_n^{(i)}$ denote spans of diagrams in
$P_n$ with top left point connected by a larc to the $i$-th point
on the right and by $P_n^{\emptyset}$ the span of diagrams such that at the top
we have a left sarc (Figure \ref{SLARCSResPnBase}). Each of these spans is a direct
summand of $Res(P_n)$.

 Then $Res (P_n) \cong P_n^{\emptyset}
\oplus \displaystyle{\bigoplus_{i=1}^{n}} P_n^{(i)}$ as left
$A^-$-modules. It is easy to see that $P_n^{\emptyset}\cong
P_n$ (Figure \ref{SLARCSResPnAction1}) since the top left sarc is
fixed. Similarly, $P_n^{(i)} \cong P_{n-i}$
 %$Res (\bigoplus_{i=1}^{n} P_n(i)) \cong \displaystyle
%{\bigoplus_{i=1}^{n}P_{n-i}}$
since $i-1$ top right sarcs are fixed (Figure
\ref{SLARCSResPnAction}).
\end{proof}

\begin{figure}[h]
\begin{center}
\scalebox{.8}{
\includegraphics{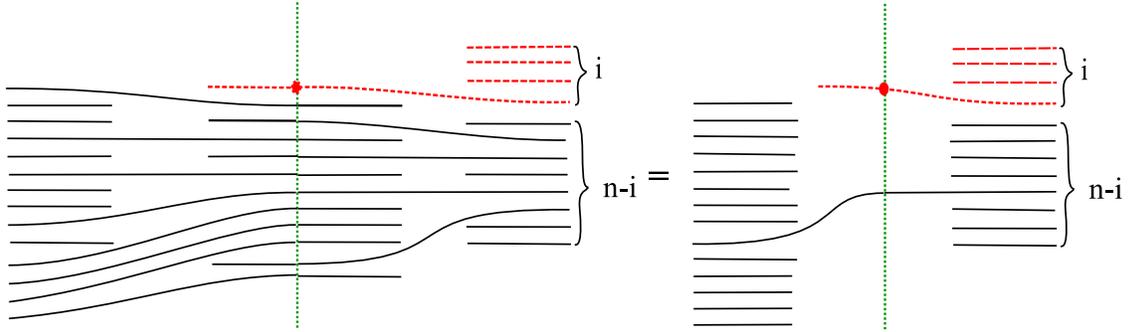}}\caption{$P_n^{(i)}$ is isomorphic to projective module $P_{n-i}.$ }
 \label{SLARCSResPnAction}
 \end{center}
 \end{figure}

\begin{prop}$Ind(P_n) \cong P_{n+1}$
for $n \geq 0$.
\end{prop}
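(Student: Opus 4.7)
The plan is to exhibit explicit mutually inverse $A^-$-module maps between $\mathrm{Ind}(P_n) = A^- \otimes_{\iota(A^-)} P_n$ and $P_{n+1}$, so the whole argument is purely formal manipulation with the tensor product.

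Define $\psi : A^- \otimes_{\iota(A^-)} P_n \to P_{n+1}$ on simple tensors by $\psi(x \otimes p) = x \cdot \iota(p)$. Since $p \in P_n = A^- 1_n$, we have $\iota(p) \in \iota(A^-)\cdot \iota(1_n) = \iota(A^-)\cdot 1_{n+1} \subseteq A^- 1_{n+1} = P_{n+1}$, so the map does land in $P_{n+1}$, and left $A^-$-linearity in $x$ is built into the definition. To verify that $\psi$ descends to the tensor product, note that the $\iota(A^-)$-module structure on $P_n$ used in forming $\mathrm{Ind}$ is the one pulled back along the identification $\iota: A^- \xrightarrow{\sim} \iota(A^-)$, matching the description of $\mathrm{Res}$ as $\bigoplus_{k>0} 1_k N$ with $A^-$ acting via $\iota$. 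In particular, $\iota(y)\cdot p = y\cdot p$ (left multiplication in $A^-$), so the tensor relation reads $x \iota(y) \otimes p = x \otimes yp$, and
\[
\psi(x \iota(y) \otimes p) = x\,\iota(y)\,\iota(p) = x\,\iota(yp) = \psi(x \otimes yp)
\]
by the homomorphism property of $\iota$.

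For the inverse, set $\phi : P_{n+1} \to \mathrm{Ind}(P_n)$ by $\phi(q) = q \otimes 1_n$; this is $A^-$-linear by construction. One composition gives $\psi(\phi(q)) = q \cdot \iota(1_n) = q \cdot 1_{n+1} = q$, using $q \in A^- 1_{n+1}$. The other composition is
\[
\phi(\psi(x \otimes p)) = x\,\iota(p) \otimes 1_n = x \otimes p \cdot 1_n = x \otimes p,
\]
where the middle equality applies the tensor relation with $y=p$ (so that $\iota(p)$ may be slid across), and the last equality uses $p \cdot 1_n = p$ for $p \in A^- 1_n$. Hence $\psi$ and $\phi$ are mutually inverse, giving $\mathrm{Ind}(P_n) \cong P_{n+1}$.

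The only step that needs a moment of thought is correctly identifying which $\iota(A^-)$-structure on $P_n$ is implicit in the induced module; once that convention is fixed to match the one already used for $\mathrm{Res}$ in the paper, the rest is a two-line computation. As a sanity check, one can also verify the isomorphism via the adjunction $\mathrm{Hom}_{A^-}(\mathrm{Ind}(P_n), N) \cong \mathrm{Hom}_{A^-}(P_n, \mathrm{Res}(N)) = 1_n\,\mathrm{Res}(N) = 1_{n+1}N = \mathrm{Hom}_{A^-}(P_{n+1}, N)$ and invoke the Yoneda lemma.
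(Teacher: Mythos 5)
Your proof is correct and fills in exactly the computation the paper dismisses with ``follows from the definition'' plus a figure: the paper's Figure \ref{SLARCSIndPn} depicts the same sliding-across-the-tensor-sign argument diagrammatically, while you write out the mutually inverse maps $x \otimes p \mapsto x\,\iota(p)$ and $q \mapsto q \otimes 1_n$ explicitly. Your closing sanity check via the adjunction and $1_n\,\mathrm{Res}(N) = 1_{n+1}N$ is also a clean alternative route to the same conclusion.
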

\begin{proof}
Follows from the definition of the induction functor, also see Figure
\ref{SLARCSIndPn}.\end{proof}

\begin{figure}[h]
\begin{center}
\scalebox{.8}{
\includegraphics{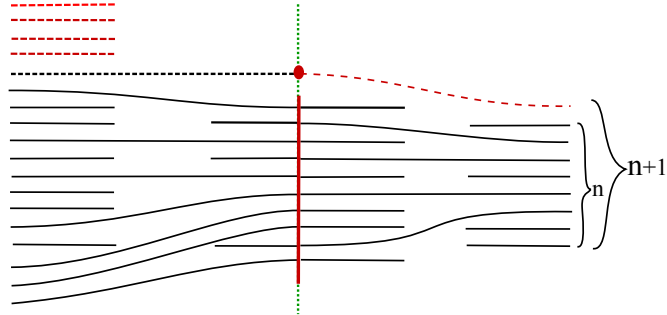}}
\caption{Induction on projective modules: an element of the tensor product
$A^- \otimes_{\iota (A^-)}P_n$ is presented diagrammatically
by composing basis elements of $A^-$ and $P_n$, which can
exchange elements of $\iota (A^-)$ through the red vertical line.}
 \label{SLARCSIndPn}
 \end{center}
 \end{figure}

\begin{prop}
For $n \geq 0$ there exist a short exact sequence:
\begin{equation}
0 \ra M_n \ra  Ind(M_n) \ra  M_{n+1} \ra 0.
\end{equation}
\end{prop}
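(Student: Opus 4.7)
The plan is to realize $Ind(M_n)$ explicitly as a quotient of $P_{n+1}$, from which both the submodule $M_n$ and the quotient $M_{n+1}$ can be read off directly. A numerical sanity check supports this decomposition: at the level of the Grothendieck group, $[Ind(M_n)] = x\cdot(x-1)^n = (x-1)^{n+1}+(x-1)^n = [M_{n+1}]+[M_n]$, consistent with a (nonsplit) extension of $M_{n+1}$ by $M_n$.

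First I would apply the right exact functor $Ind$ to the defining sequence $0\to K_n\to P_n\to M_n\to 0$, where $K_n=P_n(\leq n-1)$ is the submodule of $P_n$ spanned by diagrams with at least one right sarc. Using the preceding proposition $Ind(P_n)\cong P_{n+1}$ (with the isomorphism sending $a\otimes p\mapsto a\cdot\iota(p)$), this presents $Ind(M_n)\cong P_{n+1}/X$, where $X=A^-\cdot\iota(K_n)\subset P_{n+1}$. The next step is to identify $X$ as the submodule $Y$ spanned by all diagrams in $P_{n+1}$ having a right sarc in at least one of the bottom $n$ right-endpoint positions. The containment $X\subseteq Y$ is immediate, because every generator $\iota(d)$ inherits from $d\in K_n$ the same right-sarc configuration in the lower $n$ positions, and left multiplication by $A^-$ cannot eliminate a right sarc at a fixed right position. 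For the reverse containment, I would factor any basis diagram $e\in Y$ as $e=a\cdot\iota(d)$ with $d\in K_n$ by case analysis on the top-right endpoint of $e$: if it is a straight-through larc to the top-left then $e=\iota(d)$ directly, while if it is a larc to another left point or itself a right sarc, $a$ absorbs the topmost arc and preserves the needed right-sarc structure of $d$ in the bottom $n$ positions.

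Once $X=Y$ is in hand, the basis of $P_{n+1}/X$ partitions into two classes: (i) all-larc diagrams, in bijection with the basis of $M_{n+1}$, and (ii) diagrams whose only right sarc is at position $n+1$, in bijection with the basis of $M_n$ via removal of the top-right sarc. Since $X\subset K_{n+1}$, the canonical surjection $P_{n+1}\twoheadrightarrow M_{n+1}$ factors through $Ind(M_n)$, yielding $\beta:Ind(M_n)\twoheadrightarrow M_{n+1}$ with kernel $K_{n+1}/X$ spanned by the type-(ii) diagrams. For $\alpha:M_n\to Ind(M_n)$ I would take the basis map $m\mapsto m'$ sending $m$ to itself with an added top-right sarc; $A^-$-linearity follows from the observation that for $c\in A^-$, the quotient maps $P_n\to M_n$ and $P_{n+1}\to P_{n+1}/X$ both kill precisely the terms of $cm$ and $cm'$ that acquire a right sarc in positions $1,\dots,n$, so that $\alpha(cm)=c\cdot\alpha(m)$. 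The image of $\alpha$ coincides with $\ker\beta$, and combining the three steps yields the desired short exact sequence.

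The main obstacle will be the reverse containment $Y\subseteq X$: one needs a uniform diagrammatic factorization $e=a\cdot\iota(d)$ covering the three cases for the top-right endpoint of $e$, arranging $a$ so that it reproduces the top structure of $e$ while $d$ retains a right sarc in the lower $n$ positions. Once this structural lemma is in place, the remaining identifications are routine diagrammatic bookkeeping.
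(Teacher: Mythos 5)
Your proof is correct, and it takes a more algebraic route than the paper. The paper's argument is a two-sentence diagrammatic observation (plus a figure): in $A^-\otimes_{\iota(A^-)}M_n$ the tensor relation cannot slide the top-right endpoint of the $A^-$ factor across the boundary, since every element of $\iota(A^-)$ carries a straight-through top larc; diagrams whose top-right endpoint is a right sarc span a copy of $M_n$, and quotienting by those leaves $M_{n+1}$. You instead apply the right-exact functor $Ind$ to the presentation $0\to K_n\to P_n\to M_n\to 0$ and use $Ind(P_n)\cong P_{n+1}$ to realize $Ind(M_n)\cong P_{n+1}/X$, then identify $X$ with the submodule $Y$ of diagrams having a right sarc at some position $\le n$. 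This buys you a concrete presentation from which both maps in the short exact sequence fall out, at the cost of having to prove $X=Y$. The containment $Y\subseteq X$ that you flag as the main obstacle has a cleaner proof than the case analysis you sketch: $Y$ is generated as a left $A^-$-module by the elementary diagrams $b_{n+1}^i$ for $1\le i\le n$ (any basis diagram with a right sarc at position $i$ factors as $a\cdot b_{n+1}^i$ after isotoping that sarc to the right boundary), and each $b_{n+1}^i=\iota(b_n^i)$ with $b_n^i\in K_n$, so $Y\subseteq A^-\,\iota(K_n)=X$. With that substitution your argument is complete and rigorous, and is arguably tighter than the paper's proof, which leans on Figure~\ref{SLARCSIndMn} rather than spelling out why the pieces are $A^-$-submodules and why the identifications are $A^-$-linear.
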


\begin{proof}
Notice that the right action of $\iota(A^-)$ fixes the top right point of a
diagram in $A^-$. % (see, for example, Figure  \ref{SLARCSIndPn} without right returns).
Depending on whether this point has a right sarc or larc attached to it, see Figure \ref{SLARCSIndMn},
we get a copy of $M_n$ or $M_{n+1}$ as a submodule or a quotient of $Ind(M_n)$, respectively.\end{proof}
\begin{figure}[h]
\begin{center}
\scalebox{.7}{
\includegraphics{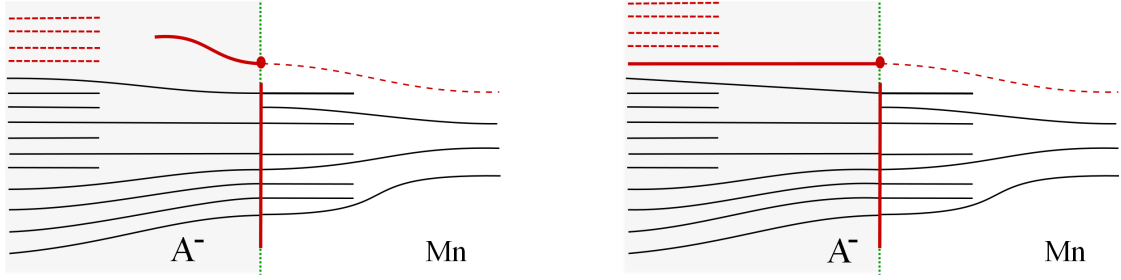}}
\caption{Induction on standard modules.}
 \label{SLARCSIndMn}
 \end{center}
 \end{figure}

\begin{proposition}
  Higher derived functors of the induction functor applied to a standard module are zero:
  \begin{equation*}
    L^i Ind(M_n)=0, \, {\rm for\, every} \, i > 0.
  \end{equation*}
\end{proposition}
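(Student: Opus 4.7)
The plan is to prove the stronger statement that $\mathrm{Ind}$ is an exact functor, which immediately forces the vanishing of $L^i\mathrm{Ind}(M_n)$ for every $i>0$. Applying the right-exact functor $\mathrm{Ind}$ to the projective resolution (\ref{SLARCSResMnbyPn}) of $M_n$ and using the preceding identification $\mathrm{Ind}(P_k)\cong P_{k+1}$ produces a finite complex of projective $A^-$-modules whose $0$th homology is $\mathrm{Ind}(M_n)$; once $\mathrm{Ind}$ is known to be exact, this complex is a projective resolution of $\mathrm{Ind}(M_n)$ and all higher derived functors vanish.

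Exactness of $\mathrm{Ind}(N)=A^-\otimes_{\iota(A^-)}N$ amounts to flatness of $A^-$ as a right $\iota(A^-)$-module. I would first establish flatness from the left by combining the earlier proposition $\mathrm{Res}(P_n)\cong \oplusoop{k=0}{n} P_k$ with the decomposition $A^-\cong\bigoplus_{n\geq 0}P_n$ as a left $A^-$-module (arising from the system of right idempotents $1_n$). Together these exhibit $A^-$ as a (possibly infinite) direct sum of projective left $\iota(A^-)$-modules, hence as a flat left $\iota(A^-)$-module.

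To transfer this flatness from the left to the right I would invoke the anti-involution $\sigma$ of $A^-$ given by reflecting diagrams about the vertical axis. Because adding a top strand and reflecting left--right are independent geometric operations, $\sigma$ commutes with $\iota$, so $\sigma$ also restricts to an anti-involution of $\iota(A^-)$ and exhibits the $(\iota(A^-),\iota(A^-))$-bimodule $A^-$ as self-dual in the appropriate sense. Consequently left-flatness of $A^-$ over $\iota(A^-)$ is equivalent to right-flatness, and the desired exactness of $\mathrm{Ind}$ follows. The main subtle point is this transfer via $\sigma$ in the non-unital, idempotented setting, which reduces to verifying the compatibility $\sigma\circ\iota=\iota\circ\sigma$ on the idempotent system $\{1_n\}$ together with the standard tensor-product identifications.
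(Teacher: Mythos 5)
Your plan of proving that $Ind$ is exact is a genuinely different and more ambitious route than the paper's, which applies $Ind$ to the projective resolution (\ref{SLARCSResMnbyPn}), uses $Ind(P_k)\cong P_{k+1}$, and then checks exactness of the resulting complex of projectives directly, by observing that as a complex of vector spaces it splits into two copies of the original resolution according to whether the added top arc remains a larc or becomes a right sarc. Whichever route one takes, the real work cannot be avoided: the crux of your argument is the assertion that $Res(P_n)\cong\bigoplus_{k=0}^n P_k$ together with $A^-\cong\bigoplus_{n\geq 0}P_n$ exhibits $A^-$ as a direct sum of projective left $\iota(A^-)$-modules, and this assertion is false.

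The problem is that $Res(P_n)$ is \emph{not} $P_n$ viewed as a left $\iota(A^-)$-module. By definition $Res(P_n)=P_n/1_0P_n\cong\bigoplus_{k\geq 1}1_kP_n$, so it discards the one-dimensional subspace $1_0P_n$, on which $\iota(A^-)$ acts by zero (because $\iota(A^-)1_0=0$). Hence, as a left $\iota(A^-)$-module, $P_n\cong 1_0P_n\oplus Res(P_n)$, and summing over $n$ gives $A^-\cong 1_0A^-\oplus(\text{projective part})$, where $1_0A^-$ is an infinite-dimensional summand carrying the zero $\iota(A^-)$-action --- certainly not a projective $\iota(A^-)$-module. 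So your deduction of flatness stalls at that step. The conclusion is nevertheless reachable, since the trivial-action summand does happen to tensor exactly against $A^-\dmod$: after the $\sigma$-twist it is $P_0=A^-1_0$ with $P_0\,\iota(A^-)=0$, whence $P_0\otimes_{\iota(A^-)}N\cong P_0\otimes_{\fieldk}\bigl(N/\iota(A^-)N\bigr)\cong P_0\otimes_{\fieldk}1_0N$, and $N\mapsto 1_0N$ is exact. But this extra computation must be supplied; the subtle point you anticipate in the $\sigma$-transfer is in fact unproblematic ($\sigma$ and $\iota$ visibly commute), and the real non-unital subtlety is the one above.
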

\begin{proof}
  Induction functor applied to the projective resolution (\ref{SLARCSResMnbyPn}) of the standard module $M_n$ gives:
  \begin{equation*}
    0 \ra P_1 \ra P_2^{\sbinom{n}{1}} \ra \ldots \ra P_m^{\sbinom{n}{m-1}}\ra \ldots \ra P_n^{\sbinom{n}{n-1}} \ra P_{n+1} \ra 0
  \end{equation*}
  where the differential corresponds to the one from projective resolution (\ref{SLARCSResMnbyPn}) with a long arc added on top of each diagram.
  This complex splits, as a complex of vector spaces, into the sum of two copies of the original complex depending on whether the top arc is a larc or right sarc.
\end{proof}

On the Grothendieck group induction corresponds to the multiplication by $x$ as:
 \begin{eqnarray*} \,[P_n]=x^n &\mapsto& [P_{n+1}]=x^{n+1}
\\  \,[M_n]=(x-1)^n & \mapsto &
[M_n]+[M_{n+1}]= (x-1)^n +(x-1)^{n+1}.
\end{eqnarray*}

 On the other hand, restriction (always exact) takes:
\begin{eqnarray*}
 \,[P_n]=x^n &\mapsto& \displaystyle{\sum_{i=0}^n} [P_i]= \displaystyle{\sum_{i=0}^n }x^i \\
 \,[M_n]=(x-1)^n & \mapsto & \displaystyle{\sum_{i=0}^n}[M_i]+[M_{i-1}]= \displaystyle{\sum_{i=0}^n}(x-1)^n +(x-1)^{n-1}.
\end{eqnarray*}

On the Grothendieck group $[Res]$ acts by sending
  \begin{eqnarray*}
 \,f(x) \mapsto \frac{x f(x)-f(1)}{x-1}.\\
\end{eqnarray*}

\vspace{1cm}\begin{center}\textbf{Tensor products }\label{Tensor}
\end{center}
\vspace{0.5cm}
We define the tensor product bifunctor $$ A^- \pmod \times A^- \pmod \ra A^- \pmod$$  on
indecomposable projective modules by $P_n \otimes P_m = P_{n+m}$ and extend it to all objects using Theorem ~\ref{SLARCProjModDec}.
Next, define tensor functor on basic morphisms of projective modules $\alpha: P_n \ra P_{n'}$ and $\beta: P_m \ra P_{m'}$, where
$\alpha \in {}_nB_{n'}$, $\beta \in {}_mB_{m'}$ by placing $\alpha$ on top of $\beta$ (see Figure  \ref{SLARCSTensorMor}) and then
extending it to all morphisms and objects using bilinearity.

\begin{figure}[h]
\begin{center}
\scalebox{1.1}{
\includegraphics{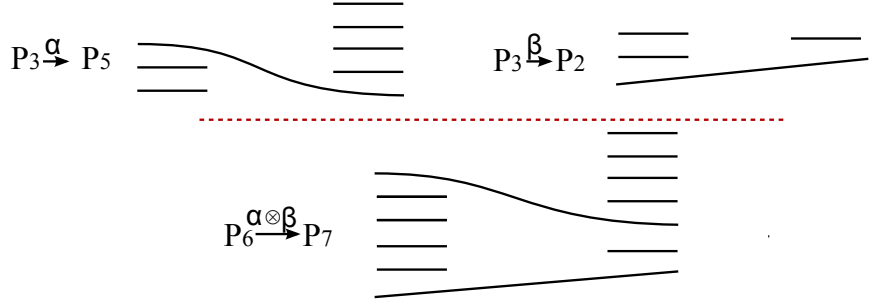}}
\caption{Tensor product defined on basic morphisms of projective modules.}
 \label{SLARCSTensorMor}\end{center}
\end{figure}

Tensor product extends to a bifunctor $C(A^- \pmod) \times C(A^- \pmod) \ra C(A^- \pmod).$
Hence, $A \pmod$ and $C(A^- \pmod)$ are monoidal categories.
Since standard modules have finite projective
resolutions, they can be viewed as objects of $C(A^- \pmod)$. Let $P(M_n)$ be the projective resolution (\ref{SLARCSResMnbyPn})
 of a standard module $M_n$.

 Note that in the Grothendieck group $[M_n]=(x-1)^n$ and $$[M_n]\cdot[M_m]=(x-1)^{n+m}=[M_{n+m}].$$

 One can
 guess now that this equality may lift to the category $A^- \dmod$ or $C(A^- \pmod)$, and we show that it does.

\begin{lemma}In $C(A^- \pmod)$, $P(M_n) \otimes P(M_m) \cong P(M_{m+n})$ for $m,n \geq 0.$
\end{lemma}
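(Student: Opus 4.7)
The plan is to identify the two complexes term by term and then verify that the differentials match. In homological degree $k$, the tensor complex $P(M_n)\otimes P(M_m)$ has
$$\bigoplus_{i+j=k} P_{n-i}^{\sbinom{n}{i}}\otimes P_{m-j}^{\sbinom{m}{j}} \;\cong\; \bigoplus_{i+j=k} P_{n+m-k}^{\sbinom{n}{i}\sbinom{m}{j}},$$
using $P_a\otimes P_b\cong P_{a+b}$. By Vandermonde, $\sum_{i+j=k}\sbinom{n}{i}\sbinom{m}{j}=\sbinom{n+m}{k}$, which is the multiplicity of $P_{n+m-k}$ in the $k$-th term of $P(M_{n+m})$, so at the level of chain groups the two sides agree.

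To make the identification canonical, I would label summands using the indexing already in place: a summand in degree $k$ of $P(M_n)$ corresponds to a $k$-subset $I\subseteq\{1,\dots,n\}$, and similarly for $P(M_m)$. Given a pair $(I,J)$ with $|I|=i$, $|J|=j$, $i+j=k$, match $P_{n-i}^{I}\otimes P_{m-j}^{J}$ with the summand of $P(M_{n+m})$ indexed by $K:=I\sqcup(J+n)\subseteq\{1,\dots,n+m\}$, where $J+n=\{j'+n:j'\in J\}$. Every $k$-subset $K$ of $\{1,\dots,n+m\}$ decomposes uniquely as $(K\cap\{1,\dots,n\})\sqcup(K\cap\{n+1,\dots,n+m\})$, so this is a bijection between index sets.

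Next, I would check that the differentials agree. The tensor-product differential on the $(i,j)$-piece is $d_{M_n}\otimes 1+(-1)^i\cdot 1\otimes d_{M_m}$. Under the stacking convention of the tensor bifunctor (Figure~\ref{SLARCSTensorMor}), composing on the right with $b^{p}_{n-i+1}$ to turn the $l$-th element of $I$ into a right sarc becomes, after stacking with the identity on the other factor, the map in $P(M_{n+m})$ that turns the $l$-th element of $K$ into a right sarc; likewise, the $l'$-th element of $J$ corresponds to the $(i+l')$-th element of $K$. Since $I$ occupies the first $i$ positions of $K$ in the natural order, the sign $(-1)^{l-1}$ from $d_{M_n}\otimes 1$ matches the component $d_{K,+l}$, while the Koszul sign $(-1)^i\cdot(-1)^{l'-1}=(-1)^{i+l'-1}$ from $1\otimes d_{M_m}$ matches $d_{K,+(i+l')}$. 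Summing over all pairs $(I,J)$ recovers the full differential of $P(M_{n+m})$.

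The main obstacle will be the sign and orientation bookkeeping: one has to ensure that the stacking convention of the tensor bifunctor places the larcs of the $P(M_n)$-factor before those of the $P(M_m)$-factor in the combined ordering that defines $K$, and that the Koszul sign on tensor complexes is precisely what is needed to align with the positional signs $(-1)^{l-1}$ built into the differentials of the resolution~(\ref{SLARCSResMnbyPn}). Once these conventions are fixed coherently, the termwise identification is a strict isomorphism of complexes in $\mc{C}(A^-\pmod)$, proving the lemma.
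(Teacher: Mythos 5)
Your proof is correct and follows the same strategy as the paper's: identify the terms degree by degree using $P_a\otimes P_b\cong P_{a+b}$ and the Vandermonde identity, then check that the differentials agree. The paper simply asserts "this module isomorphism respects differentials," whereas you fill in that gap with an explicit bijection of index sets $K=I\sqcup(J+n)$ and the Koszul-sign bookkeeping -- a useful elaboration, though you are right to flag that the stacking convention (which factor's larcs come first in the combined ordering) must be fixed coherently, since the figure in the paper stacks $\alpha$ on top of $\beta$ while positions are counted from the bottom, which would reverse the roles of $I$ and $J$ in your formula for $K$ (or equivalently require the opposite Koszul convention).
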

\begin{proof}
  The $p$-th term in the product of the projective resolutions $P(M_m)$ and $P(M_n)$ is
  $$ \displaystyle{\bigoplus_{k+l=p}}P_k^{\sbinom{n}{k}}\otimes P_l^{\sbinom{m}{l}} \cong P_p^{\sbinom{n+m}{p}}$$
  This module isomorphism respects differentials and gives an isomorphism of complexes.
  \iffalse $d(a \otimes b)=da \otimes b \oplus (-1)^? a \otimes db$ \fi
\end{proof}

\begin{cor}\label{tensorMn}
The following relation holds between standard modules viewed as objects of $C(A^- \pmod)$
  $$M_n \otimes M_m \cong M_{m+n}.$$
\end{cor}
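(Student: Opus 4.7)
The plan is to read this off directly from the preceding lemma via the identification of standard modules with their projective resolutions. Recall from the discussion in the excerpt that each standard module $M_k$ admits the finite projective resolution $P(M_k)$ from (\ref{SLARCSResMnbyPn}), and that this places $M_k$ as an object of $C(A^-\pmod)$ via the quasi-isomorphism $P(M_k) \to M_k$. The tensor bifunctor on $C(A^-\pmod)$ was defined termwise (extending the monoidal structure on $A^-\pmod$), so under this identification $M_n \otimes M_m$ is by construction the total complex $P(M_n) \otimes P(M_m)$.

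The preceding lemma asserts precisely that $P(M_n) \otimes P(M_m) \cong P(M_{n+m})$ in $C(A^-\pmod)$, its $p$-th term being
\[
\bigoplus_{k+l=p} P_k^{\sbinom{n}{k}} \otimes P_l^{\sbinom{m}{l}} \cong P_p^{\sbinom{n+m}{p}},
\]
with matching differentials. Since $P(M_{n+m})$ is the complex representing $M_{n+m}$ in $C(A^-\pmod)$, composing these identifications yields $M_n \otimes M_m \cong M_{n+m}$. There is essentially no obstacle: the corollary is a one-line consequence of the lemma, once one is careful to note that the meaning of the statement $M_n \otimes M_m \cong M_{n+m}$ in $C(A^-\pmod)$ is an isomorphism of the underlying projective resolutions, not of the abstract modules $M_n, M_m, M_{n+m}$ in $A^-\dmod$ (where such a statement would not make sense in general, since $M_n \otimes_{A^-} M_m$ is not even defined in the module category from the given data).
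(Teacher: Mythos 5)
Your proof is correct and is exactly the paper's (implicit) reasoning: the paper gives no separate proof of this corollary, treating it as an immediate consequence of the preceding lemma $P(M_n)\otimes P(M_m)\cong P(M_{n+m})$ once standard modules are identified with their projective resolutions in $C(A^-\pmod)$. Your closing caveat about what the isomorphism means (an isomorphism of objects in the homotopy category of bounded complexes of projectives, not some module-level tensor product) is a sensible clarification and consistent with the paper's setup.
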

On Grothendieck group the tensor product descends to the multiplication in the ring $\Z[x]$, under the
  isomorphism of abelian groups $K_0(A^-) \cong \Z[x].$

To define tensor product for arbitrary modules we need to construct and tensor their projective
resolutions. If both modules $M,N$ have finite filtrations with successive quotients isomorphic to
standard modules $M_n$ for various $n$, then the derived tensor product $M \widehat{\otimes} N$  has
cohomology only in degree zero, and $H^0(M \widehat{\otimes} N)\cong_{D^b} M \widehat{\otimes} N$
has a filtration by standard modules. Derived tensor product restricts to a bifunctor on the
category of modules admitting a finite filtration by standard modules.

\vspace{1cm}\begin{center}
  \textbf{Cabling functors}
\end{center}
\vspace{0.5cm}

For every $A^-$--module $M$ and a positive integer $k$ construct
the corresponding cabled module ${}^{[k]}M$ in the following way:
\begin{equation}\label{SLdefCable}
  1_n{}^{[k]}M=1_{nk}M, \, {\rm hence }\, {}^{[k]}M= \displaystyle{\oplusop{n \geq 0}1_{nk}M}.
\end{equation}

\begin{figure}[h]
\begin{center}
\scalebox{.7}{
\includegraphics{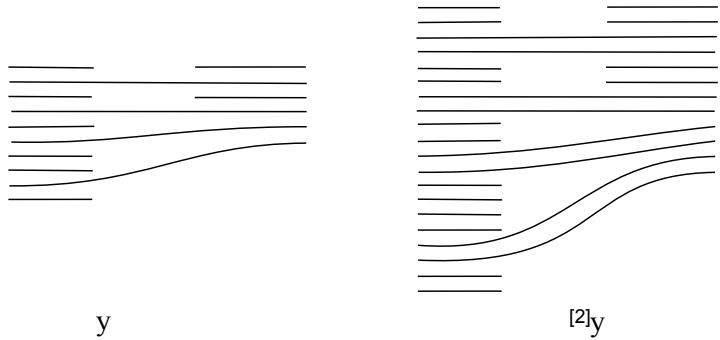}}
\caption{A diagram $y \in {}_{11}B^-_{6}$ and 2-cable ${}^{[2]}y \in
{}_{22}B^-_{12}$.} \label{SLARCSCableY}
  \end{center}
\end{figure}

 Given a diagram $y \in {}_sB^-_{l}$,
construct a diagram ${}^{[k]}y \in {}_{sk}B^-_{lk}$, called the $k$-cabling of $y$, by taking $k$ parallel
copies of each arc (Figure ~\ref{SLARCSCableY}). For example, ${}^{[k]}1_n=1_{nk}.$
 By definition, the action of an element $ \alpha \in A^- $ on
${}^{[k]}M_n$ is the regular action of its $k$-cabling $\alpha^k$.
%Diagram $y$ acts on ${}^{[k]}M$ via ${}^{[k]}y.$

What is the result of $k$-cabling simple, standard
and projective modules? It is easy to see that, if $k$ divides $n$,
the $k$-cabling of the simple module $L_n$ is the module
$L_{n/k}:$
\begin{equation}
1_m{}^{[k]}L_n=1_{km}L_n=\left\{%
\begin{array}{ll}
    \fieldk, & \hbox{if $km=n$;} \\
    0, & \hbox{otherwise.} \\
\end{array}%
\right.
 \label{SLARCSAntiBraidLn}
\end{equation}
If $k$ does not divide $n$ the result is zero, ${}^{[k]}L_n=0.$

Recall that basis elements of standard $A^-$ modules $M_n$ correspond to diagrams in $B_n$
with $n$ through arcs and an arbitrary number of left sarcs.
Let $S(n,k,i)$ denote the number of ways to select $n$ numbers between $1$ and
$ki$ such that each of the sets $\{kj+1, \ldots, k(j+1) \}_{0 \leq j
<i}$ contains at least one of the selected numbers.
\begin{proposition}\label{SLARCSCablingM}
  \begin{equation*}
  {}^{[k]}M_n \cong \displaystyle{\oplusoop{i=\lceil \frac{n}{k}\rceil}{n} M_i^{S(n,k,i)}}
\end{equation*}
\end{proposition}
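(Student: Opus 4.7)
My plan is to build an explicit $A^-$-module isomorphism $\Phi$ from the right-hand side to ${}^{[k]}M_n$ by assembling maps from each standard module $M_i$ labeled by the combinatorial data that indexes $S(n,k,i)$. Partition $\{1,\ldots,mk\}$ into the $m$ consecutive blocks $B_1,\ldots,B_m$ of size $k$. A basis element of $1_m({}^{[k]}M_n) = 1_{mk}M_n$ is a subset $T\subseteq\{1,\ldots,mk\}$ of size $n$ recording the positions of the $n$ larcs. Every such $T$ has a canonical description: let $b_1<\cdots<b_i$ be the indices of the blocks it intersects and set $T_r = T\cap B_{b_r}$, viewed as a nonempty subset of $\{1,\ldots,k\}$. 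This associates $T$ with a pair consisting of a size-$i$ subset $J=\{b_1,\ldots,b_i\}\subseteq\{1,\ldots,m\}$ (a basis element of $1_m M_i$) and an ordered tuple $(T_1,\ldots,T_i)$ of nonempty subsets of $\{1,\ldots,k\}$ with $\sum |T_r|=n$. The number of such tuples is exactly $S(n,k,i)$, and the decomposition forces $\lceil n/k\rceil \le i\le n$.

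For each such tuple $\tau = (T_1,\ldots,T_i)$ I will define a map $\phi_\tau: M_i \to {}^{[k]}M_n$ by sending $1_i \in 1_iM_i$ to the element $v_\tau \in 1_{ik}M_n$ represented by the subset $T_\tau\subseteq\{1,\ldots,ik\}$ with $T_\tau\cap B_r = T_r$ for all $r$. To check well-definedness, recall that $M_i = P_i/N$, where $N$ is generated as a left $A^-$-module by the diagrams $b_i^l \in {}_{i-1}B^-_i$, $1\le l\le i$, so it suffices to verify $b_i^l\cdot v_\tau = 0$ in ${}^{[k]}M_n$. The cabled action multiplies $v_\tau$ by $(b_i^l)^{[k]}$, which carries $k$ right sarcs across the entire block $B_l$. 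Either $T_l = \{1,\ldots,k\}$, in which case the composed diagram has a row of right sarcs in block $B_l$ and is killed on passing to $M_n$; or else there exists $p\in B_l\setminus T_l$, in which case a right sarc of $(b_i^l)^{[k]}$ at position $p$ meets a left sarc of $v_\tau$, producing a floating arc and killing the product already in $A^-$. Thus $\phi_\tau$ is a well-defined homomorphism.

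Assembling the $\phi_\tau$ over all $i$ and all tuples gives
\[
\Phi : \bigoplus_{i=\lceil n/k\rceil}^{n} M_i^{S(n,k,i)} \longrightarrow {}^{[k]}M_n,
\]
which is automatically $A^-$-linear. To see it is an isomorphism I will verify it is a bijection on bases: the map sends a basis element $J=\{j_1<\cdots<j_i\}\in 1_m M_i$ in the summand indexed by $\tau=(T_1,\ldots,T_i)$ to the subset $T \subseteq \{1,\ldots,mk\}$ obtained by placing the pattern $T_r$ inside block $B_{j_r}$ for each $r$ and leaving all other blocks empty. The canonical decomposition of $T$'s described in the first paragraph produces the inverse bijection, so the dimension identity $\binom{mk}{n} = \sum_i S(n,k,i)\binom{m}{i}$ (which follows from $(1+x)^{mk} = (1+((1+x)^k-1))^m$) is realized as a term-by-term match.

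The main obstacle is the verification in the second paragraph: one must be careful that the two ways the cabled right sarcs can interact with $v_\tau$ — meeting larcs (yielding right sarcs in $M_n$) versus meeting left sarcs (yielding floating arcs in $A^-$) — both lead to zero, because this is exactly what guarantees compatibility with the defining relations of the standard module $M_i$ after cabling. Once this is in hand, the combinatorial bijection in paragraph three is essentially bookkeeping.
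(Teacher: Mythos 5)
Your proof is correct, and it fills a genuine gap: the paper's own ``proof'' is simply left to the reader with a pointer to Figure~22 and the formula $S(n,k,i)=\sum_{\lambda}\prod_j\binom{k}{\lambda_j}$. Your block-decomposition of $\{1,\ldots,mk\}$ into $m$ consecutive $k$-blocks, and the resulting bijection sending $T\subseteq\{1,\ldots,mk\}$ to the pair consisting of the set $J$ of blocks hit together with the tuple $\tau=(T_1,\ldots,T_i)$ of nonempty block-patterns, is exactly the combinatorics the paper's formula for $S(n,k,i)$ encodes, so your argument is the missing detailed version of the intended one rather than a different route.

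Two small points worth noting. First, you correctly identify the kernel of $P_i\twoheadrightarrow M_i$ as the left ideal generated by the single-right-sarc diagrams $b_i^l$: any diagram in $B_n^-$ with at least one right sarc factors through one of these, which is what makes the well-definedness check for $\phi_\tau$ a finite verification. Second, the case analysis in that check is the crux and you handle it correctly: when $T_l=B_l$ the cabled right sarcs of $(b_i^l)^{[k]}$ land entirely on larcs of $v_\tau$, producing right sarcs that die in $M_n$; otherwise some cabled right sarc meets a left sarc of $v_\tau$ and the product is already zero in $A^-$ by the floating-arc relation. These two mechanisms together are what guarantee compatibility with the standard-module relations under cabling. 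The dimension count $\binom{mk}{n}=\sum_i S(n,k,i)\binom{m}{i}$ from $(1+x)^{mk}=\bigl(1+((1+x)^k-1)\bigr)^m$ then confirms surjectivity once injectivity on bases is observed, so the argument is complete.

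One stylistic caveat: the paper phrases $S(n,k,i)$ as a sum over ``partitions'' $\lambda$ of $n$, but the count it actually uses (and that you correctly compute) is over ordered compositions of $n$ into $i$ positive parts each at most $k$; your formulation is the unambiguous one and matches the paper's definition of $S(n,k,i)$ via selections from $\{1,\ldots,ki\}$.
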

\begin{proof}
The proof is left to the reader following examples shown on Figure  ~\ref{SLARCSCableMn1}. $S(n,k,i)$ is the sum of products $\displaystyle{\prod_{j=1}^i }\sbinom{k}{\lambda_j}$,  over all possible
partitions $\lambda=(\lambda_1, \ldots, \lambda_i)$ of $n$ into $i$ blocks of length at most $k$.
\end{proof}
\begin{figure}[h]
\begin{center}
\scalebox{1.2}{
\includegraphics{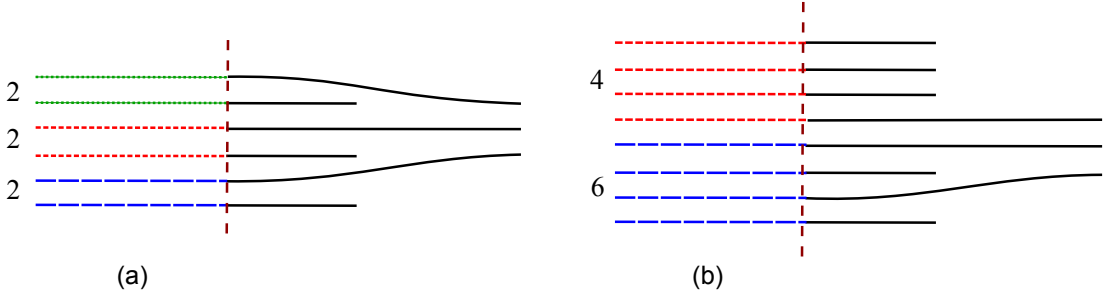}}
\caption{(a) $2$-cabling of $M_3$; (b) 4-cabling of $M_3$ corresponding to the partition
$(2,1)$: 2 arcs in the same part contribute 6 hence, the total
contribution is 24.} \label{SLARCSCableMn1}
\end{center}
\end{figure}

We compute cabling modules of $M_n$ for small values
of $n$: ${}^{[k]}M_0=M_0$, ${}^{[k]}M_1=M_1^{k}$,  ${}^{[k]}M_2=M_2^{k^2} \oplus M_1^{\sbinom{k}{2}}$,
${}^{[k]}M_3=M_3^{k^3}\oplus M_2^{2\sbinom{k}{1}\sbinom{k}{2}} \oplus M_1^{\sbinom{k}{3}}$.\\

%%%%%%%%%%%%%%%%%%%%%%%%%%%%%%%%%%%%%%%%%%%%%%%%%%%%%%%%%%%%%%%%%%%%%%%%%%%%%%%%
Studying cablings of projective modules reduces to the case of standard modules:
 ${}^{[k]}P_n$ has a filtration with the $i$-th term consisting of $\sbinom{n}{i} {}^{[k]}M_i$,
based on the filtration \ref{SLArcsFilterPn} of $P_n$ by $P_n(i)$, $i \leq n.$
%and the number of ways to pick $i$ out of $n$ right endpoints to extend to through arcs.}\\

 Cabling functor ${}^{[k]}$ is exact, sending an $A^-$-module $M$ to its $k$-cabled module
${}^{[k]}M$, and categorifies the following operator on the Grothendieck
group:

$$[M_n]=(x-1)^n \mapsto [{}^{[k]}M_n]=\displaystyle{\sum_{i=\lceil \frac{n}{k}\rceil}^nS(n,k,i)(x-1)^i}.$$

Notice that ${}^{[s][k]}M \cong {}^{[ks]}M$ functorially in $M$.

\begin{prop}
  The cabling functor ${}^{[k]}$ preserves finitely generated $A^-$-modules.
\end{prop}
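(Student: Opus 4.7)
The plan is to reduce the claim, via the exactness of the cabling functor stated just before this proposition, to the single statement that each cabled projective module ${}^{[k]}P_n$ is finitely generated; this in turn follows from the filtration of $P_n$ and the description of ${}^{[k]}M_i$ in Proposition~\ref{SLARCSCablingM}.

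More concretely, first I note that cabling commutes with finite direct sums, since $1_{mk}(M'\oplus M'')=1_{mk}M'\oplus 1_{mk}M''$ and the $A^-$-action is computed component-wise through the $k$-cabling of elements. Combined with the exactness of ${}^{[k]}$, this means that if $M$ is a quotient of $\bigoplus_{n=0}^N P_n^{a_n}$, then ${}^{[k]}M$ is a quotient of $\bigoplus_{n=0}^N ({}^{[k]}P_n)^{a_n}$. Hence it suffices to show that each ${}^{[k]}P_n$ is finitely generated.

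For this I would invoke the filtration (\ref{SLArcsFilterPn}) of $P_n$ by $P_n(\leq m)$, whose successive quotients are direct sums of standard modules $M_m$ with multiplicity $\sbinom{n}{m}$. Applying the exact functor ${}^{[k]}$ yields a finite filtration of ${}^{[k]}P_n$ with successive quotients isomorphic to $({}^{[k]}M_m)^{\sbinom{n}{m}}$ for $0\le m\le n$. By Proposition~\ref{SLARCSCablingM}, each ${}^{[k]}M_m$ is a finite direct sum of standard modules $M_j$ with $\lceil m/k\rceil\le j\le m$, and every standard module $M_j$ is finitely generated as it is a quotient of $P_j$. Therefore each successive subquotient of the filtration of ${}^{[k]}P_n$ is finitely generated, and since an extension of a finitely generated module by a finitely generated module is finitely generated (by Proposition~\ref{SLARCSNoetherian} one could also appeal to noetherianity), iterating yields that ${}^{[k]}P_n$ is finitely generated.

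The only real subtlety is verifying that the cabling functor behaves well with respect to the filtration, i.e.\ that ${}^{[k]}$ applied to the short exact sequences $0\to P_n(\leq m-1)\to P_n(\leq m)\to M_m^{\sbinom{n}{m}}\to 0$ remain exact; this is precisely the exactness assertion for ${}^{[k]}$ already established, so there is no genuine obstacle beyond assembling the pieces.
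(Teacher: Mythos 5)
Your proof is correct and follows essentially the same route as the paper's: reduce to the indecomposable projectives $P_n$ via the finite filtration (\ref{SLArcsFilterPn}) by standard modules, observe that each ${}^{[k]}M_m$ is a finite sum of standard modules by Proposition~\ref{SLARCSCablingM} (hence finitely generated), and then conclude for a general finitely generated $M$ by writing it as a quotient of a finite direct sum of $P_n$'s. You simply fill in some steps the paper leaves implicit, such as that cabling commutes with finite direct sums and preserves surjections, and that extensions of finitely generated modules are finitely generated.
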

\begin{proof}
  ${}^{[k]}M_n$ is finitely generated. Indecomposable projective module $P_m$ has a finite filtration by standard modules (\ref{SLArcsFilterPn}),
 therefore ${}^{[k]}P_m$ is finitely generated. A finitely generated module $M$ is a quotient of finite sum of indecomposable projective modules $P_m$,
thus ${}^{[k]}M$ is finitely generated, and the functor ${}^{[k]}$ preserves the category $A^- \dmod.$

\end{proof}

Another cabling functor, denoted by $\mathfrak{L}_k$, on the category $A^- \pmod$ can be defined on objects by sending $\mathfrak{L}_k (P_n)=P_{nk}$
 and on morphisms in the same way as above, Figure \ref{SLARCSCableY}, i.e.  $\mathfrak{L}_k (\alpha)={}^{[k]}\alpha$ for $\alpha \in {}_mB_n^-.$

Given a full subcategory $\mathcal{A} \subset \mathcal{B},$ we say that endofunctors $F:\mathcal{A} \ra \mathcal{A} $ and $G:\mathcal{B} \ra \mathcal{B} $
are \emph{weakly adjoint} if
$$\Hom_{\mathcal{B}} (FM_1, M_2) \cong \Hom_{\mathcal{B}} (M_1, GM_2),$$
functorially in $M_1 \in \mathcal{A}$ and $M_2 \in \mathcal{B}.$

%Since ${}^{[k]}(P)$ is not a projective module
\begin{prop}
Cabling functors $\mathfrak{L}_k$ and ${}^{[k]}$ acting on categories  $A^- \pmod$ and $A^- \dmod$, respectively, are weakly adjoint.
\end{prop}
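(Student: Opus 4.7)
The plan is to reduce to the case when $M_1$ is an indecomposable projective $P_n$ and then unwind the definitions. Both $\mathfrak{L}_k$ and $\Hom_{A^-}(-, M_2)$ are additive functors on $A^-\pmod$, so by Theorem \ref{SLARCProjModDec} it suffices to establish a natural isomorphism
\[
\Hom_{A^-}(\mathfrak{L}_k P_n, M_2) \cong \Hom_{A^-}(P_n, {}^{[k]} M_2),
\]
functorially in $n$ and in $M_2 \in A^-\dmod$.

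For the computation, I would use the standard identification $\Hom_{A^-}(P_n, N) \cong 1_n N$, sending $\phi \mapsto \phi(1_n)$, which is valid for any $N$ satisfying $N = \oplusop{m\ge 0}1_m N$. Applied to each side: on the left, $\mathfrak{L}_k(P_n)=P_{nk}$ by the definition of $\mathfrak{L}_k$, so
\[
\Hom_{A^-}(\mathfrak{L}_k P_n, M_2) = \Hom_{A^-}(P_{nk}, M_2) \cong 1_{nk} M_2;
\]
on the right, the defining property (\ref{SLdefCable}) of cabling gives $1_n {}^{[k]}M_2 = 1_{nk} M_2$, so
\[
\Hom_{A^-}(P_n, {}^{[k]}M_2) \cong 1_n {}^{[k]}M_2 = 1_{nk} M_2.
\]
Both Hom spaces are therefore canonically identified with the same vector space $1_{nk} M_2$, which yields the desired bijection $\Psi_{n, M_2}$.

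The step requiring actual verification is functoriality in $n$, since this is where the cabling of morphisms interacts with the cabling of modules. A basic morphism $\alpha : P_n \to P_{n'}$ is given by right multiplication by some element of ${}_n B^-_{n'}$ which I also denote $\alpha$; its image $\mathfrak{L}_k(\alpha): P_{nk}\to P_{n'k}$ is right multiplication by ${}^{[k]}\alpha \in {}_{nk}B^-_{n'k}$. Tracing an element $m' \in 1_{n'k}M_2 = 1_{n'}{}^{[k]}M_2$ through both composites, one gets ${}^{[k]}\alpha \cdot m'$ on the Hom-with-$P_{nk}$ side, and $\alpha \cdot m'$ computed in ${}^{[k]}M_2$ on the other side. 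These agree \emph{by the very definition} of the $A^-$-action on ${}^{[k]}M_2$: an element $\alpha \in A^-$ acts on ${}^{[k]}M_2$ through its $k$-cabling ${}^{[k]}\alpha$ acting on $M_2$. Naturality in $M_2$ is immediate since every arrow in the identification $\Hom(P_n,N)\cong 1_nN$ is natural in $N$.

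The main (minor) obstacle is simply bookkeeping: one must keep track of the fact that the identification $1_n{}^{[k]}M_2 = 1_{nk}M_2$ is an equality of vector spaces but \emph{not} of $A^-$-modules in the same way — the left-hand side carries the cabled action. Once one pins down that this is exactly what makes the naturality square commute, there is nothing further to check, and the proposition follows.
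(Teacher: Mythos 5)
Your argument is precisely the paper's: reduce to $P_n$, identify $\Hom(\mathfrak{L}_k P_n, M)\cong\Hom(P_{nk},M)\cong 1_{nk}M = 1_n{}^{[k]}M\cong\Hom(P_n,{}^{[k]}M)$. You go further than the published proof by explicitly checking naturality in $n$, which the paper leaves implicit, but the route is the same.
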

\begin{proof} It is sufficient to prove the statement for indecomposable projective modules $P_n \in A^- \pmod$ and any module $M \in A^- \dmod$.
  \begin{equation*}
  \Hom(\mathfrak{L}_k (P_n),M)\cong \Hom(P_{nk},M)\cong 1_{nk}M \cong 1_n{}^{[k]}M \cong \Hom(P_n,{}^{[k]}M).
 \end{equation*}

\end{proof}

\section{Monoidal structure}\label{MS}

Full subcategory $C^-$ of $A^- \pmod$ which consists of objects $P_n$, $n \geq 0$ is monoidal and preadditive,
with the unit object $\mathbf{1}=P_0$ and a single generating object $P_1$, since $P_n=(P_1)^{\otimes n}$.
One can think of $C^-$ as a monoidal category with generating object $P_1$, generating morphisms $a \in \Hom(P_1,P_0)$ and $b \in \Hom(P_0,P_1)$ and defining relation setting the value of the floating arc viewed  as an endomorphism of $\mathbf{1}$, to zero, see Figure \ref{SLARCSMonoidal}.
\begin{figure}[h]
\begin{center}
\scalebox{1.3}{
\includegraphics{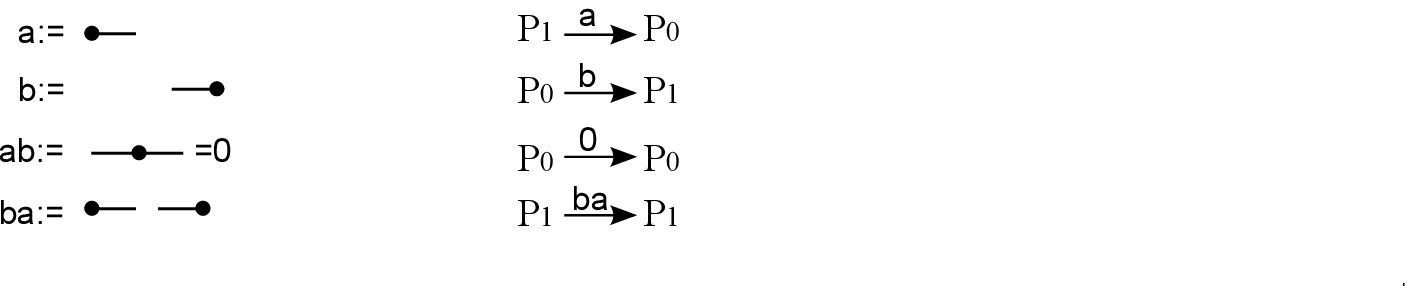}}
\caption{Generating morphisms in category $C^-$.} \label{SLARCSMonoidal}
\end{center}
\end{figure}

$C^-$ is a monoidal $\fieldk$-linear category such that:
\begin{eqnarray*}
  \Hom(P_0,\mathbf{1})&=&\fieldk \\
  \Hom(P_0, P_1) &=& \fieldk a \\
  \Hom(P_1, P_0) &=& \fieldk b \\
  \Hom(P_1,P_1) &=& \fieldk 1 \oplus \fieldk ab
\end{eqnarray*}

From this point of view, the SLarc algebra $A^-$ can be viewed as the $\Hom$ algebra of the monoidal category $C^-$:
 $$A^-=\oplusop{n,m \geq 0} \Hom(P_1^{\otimes n},P_1^{\otimes m}).$$

\begin{prop}\label{SLARCSMonStructMn}
  Standard module $M_n$ is isomorphic to the $n-$th derived tensor product of $M_1$: $M_n \simeq M_1^{\widehat{\otimes} n}.$
\end{prop}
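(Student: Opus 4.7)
The plan is to derive this directly from Corollary \ref{tensorMn} by induction on $n$, using the observation that standard modules admit finite projective resolutions and therefore their derived tensor products are computed by tensoring those resolutions.

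First I would set up the framework. Since each $M_n$ admits the finite projective resolution $P(M_n)$ from (\ref{SLARCSResMnbyPn}), the derived tensor product $M_{n_1}\widehat{\otimes}\cdots\widehat{\otimes} M_{n_k}$ is computed, up to quasi-isomorphism, by the honest tensor product $P(M_{n_1})\otimes\cdots\otimes P(M_{n_k})$ of bounded projective complexes in $\mathcal{C}(A^-\pmod)$. This reduces the claim to an isomorphism of complexes of projectives.

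The base case $n=1$ is tautological, and the inductive step uses Corollary \ref{tensorMn}: assuming $M_1^{\widehat{\otimes}(n-1)}\simeq M_{n-1}$ in $D^b(A^-\dmod)$, one obtains
\[
M_1^{\widehat{\otimes} n}\;\simeq\; M_{n-1}\widehat{\otimes} M_1\;\simeq\; P(M_{n-1})\otimes P(M_1)\;\cong\; P(M_n)\;\simeq\; M_n,
\]
where the middle isomorphism is precisely the content of the lemma preceding Corollary \ref{tensorMn}. Associativity of $\otimes$ on $\mathcal{C}(A^-\pmod)$ (which holds since tensor product of projectives is just horizontal stacking of diagrams, cf.\ Figure \ref{SLARCSTensorMor}) ensures that the iterated derived tensor product is well-defined and that the induction is legitimate.

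There is essentially no obstacle here beyond making sure the derived tensor product is interpreted correctly: since $M_n$ has finite projective dimension (shown earlier to equal $n$), no flatness or boundedness issues arise, and $P(M_n)$ is a legitimate representative of $M_n$ in $D^b(A^-\dmod)$. The only mild subtlety is the compatibility of the isomorphism $P(M_n)\otimes P(M_m)\cong P(M_{n+m})$ with associators; but since the tensor product on $C^-$ is strict (coming from concatenation of diagrams), this is automatic. Thus the proposition reduces to repeated application of the already-established isomorphism $M_n\otimes M_m\cong M_{n+m}$ in $\mathcal{C}(A^-\pmod)$.
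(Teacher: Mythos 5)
Your proof is correct, but it takes a different route than the paper's. You argue by induction on $n$, feeding the already-established isomorphism $P(M_{n-1})\otimes P(M_1)\cong P(M_n)$ (the Lemma preceding Corollary \ref{tensorMn}) into the inductive step, with a brief remark that strictness of $\otimes$ on $C^-$ handles associativity so the iterated derived tensor product is unambiguous. The paper instead bypasses the Lemma entirely and gives a direct, one-shot computation: it replaces $M_1$ by its two-term resolution $0\to P_0\to P_1\to 0$, forms the $n$-fold tensor power, notes that this Koszul-type cube complex has $2^n$ summands $P_{\epsilon_1}\otimes\cdots\otimes P_{\epsilon_n}=P_{\epsilon_1+\cdots+\epsilon_n}$ with $\epsilon_i\in\{0,1\}$, so that $P_m$ appears with multiplicity $\sbinom{n}{m}$, and matches the resulting complex term-by-term with the projective resolution (\ref{SLARCSResMnbyPn}) of $M_n$. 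The two arguments are logically close (the Lemma's own proof already contains the binomial bookkeeping you'd need), but yours is more modular and leans on earlier results, while the paper's is self-contained and makes the $n$-dimensional cube structure of $P(M_n)$ explicit as the $n$-fold tensor power of a length-one complex. Either approach is fine; the paper's has the small advantage of exhibiting the cube picture directly rather than reconstructing it one factor at a time.
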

\begin{proof}
  The minimal projective resolution of $M_1$ is
  \begin{equation}\label{SLARCSResM1}
    0 \ra P_0 \ra P_1 \ra 0
  \end{equation}
  The $n$-th derived tensor power $M_1^{\widehat{\otimes} n}$ can be computed by substituting this resolution
  for each term in the tensor product
   $ M_1^{\otimes n} \mapsto( 0 \ra P_0 \ra P_1 \ra 0)^{\otimes n}.$ This tensor power will contain $2^n$ terms of the form
   $$P_{\epsilon_1} \otimes P_{\epsilon_2} \otimes \ldots \otimes P_{\epsilon_n}=P_{\epsilon_1+\epsilon_2+\ldots +\epsilon_n}$$
  for $\epsilon_i \in \{0,1 \}$.

  Projective module $P_m$ will appear $\sbinom{n}{m}$ times in the complex, and it is easy to match the resulting complex to the   projective resolution (\ref{SLARCSResMnbyPn}) of the standard module $M_n$.
\end{proof}
Proposition \ref{SLARCSMonStructMn}, see also Corollary \ref{tensorMn}, generalizes the observation that $[M_n]=(x-1)^n=[M_1]^n.$\\

\section{A modification of $A^-$}

Assuming that we work over a field $\fieldk$, we have two canonical choices for the value of the floating arc: either $0$ or $1$.
Choosing value zero yields described categorification of the polynomial ring and, interestingly enough, value one leads to yet another
categorification of the polynomial ring. Let us denote by $A^+$ this modification of the SLarc algebra $A^-$. Elements $1_n$ and
projective modules $P_n$ are defined as in $A^-$ algebra case.

\begin{figure}[h]
\scalebox{.99}{\includegraphics{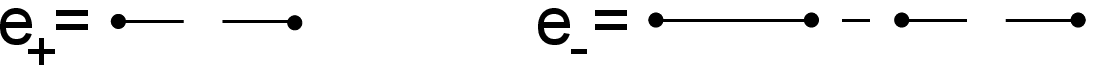}}
 \caption{Idempotents $e_+$ and $e_-$ in $_1A^+_1.$}
 \label{SLARCid11}
\end{figure}

However, changing the value of the floating arc from $0$ to $1$ produces additional idempotents, such as element
$e_+ \in {}_1B_1^+$ which is an idempotent according to calculation
shown on Figure \ref{AplusId}, and the complementary idempotent $e_-=1_1 -e_+$, see Figure \ref{SLARCid11}.

\begin{figure}[h]
\scalebox{.99}{\includegraphics{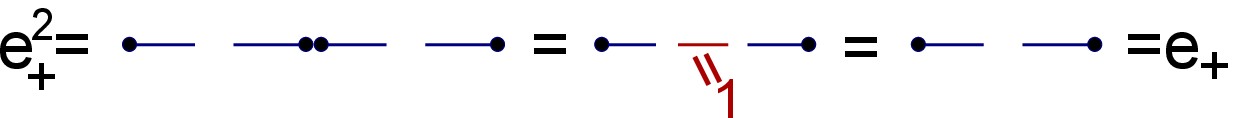}}
 \caption{Element $e_+$ is an idempotent in algebra $A^-$.}
 \label{AplusId}
\end{figure}

Idempotents in $End(P_n)$ for any $n>1$ can be obtained from $e_+$ and  $e_-$
by using the monoidal structure of $A^+ \pmod$ analogous to the one in $A^- \pmod$,
for which  $P_n \otimes P_m=P_{n+m}. $

Let $\varepsilon=(\varepsilon_1,\varepsilon_2, \ldots, \varepsilon_n)$, $\varepsilon_i \in \{+,-\},$
denote a sequence of pluses and minuses of length $n$, and $(-^n)$ the sequence containing exactly $n$ minuses.
The corresponding idempotents are denoted by $e_{\varepsilon}$ and $e_{(-^n)}$, respectively. The natural tensor
product structure on $A^+ \pmod$  satisfies
$P_{\varepsilon} \otimes P_{\varepsilon'}=P_{\varepsilon \varepsilon'}.$
Idempotent $e_{\varepsilon}= \displaystyle{\otimes_{i=1}^n}e_{\varepsilon_i}$ is just a tensor product
of idempotents $e_+$ and  $e_-$'s, according to the sequence $\varepsilon$, for example see Figure  \ref{AplusMoreId}.

\begin{figure}[h]
\scalebox{.99}{\includegraphics{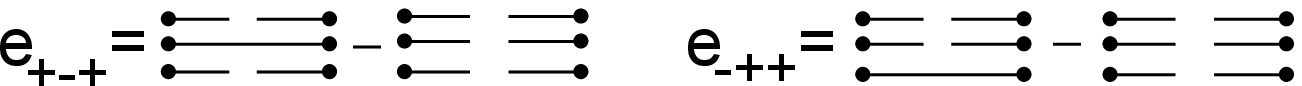}}
 \caption{Additional idempotents in algebra $_3A_3^+.$}
 \label{AplusMoreId}
\end{figure}

Notice that $1_n=\displaystyle{\sum_{|\varepsilon|=n}e_{\varepsilon}}.$ Moreover, these
idempotents are mutually orthogonal,
$e_{\varepsilon}e_{\varepsilon'}=\delta_{\varepsilon, \varepsilon'}e_{\varepsilon}.$
In particular, $e_+ e_-=e_-e_+=0.$

In general, given a ring $R$ and two idempotents $e,f \in R$, projective modules
$Re$ and $Rf$ are isomorphic iff there exist elements $a=d_{e \ra f}, b=d_{f \ra e} \in R$ such that
$eafbe=e$ and $fbeaf=f.$ Moreover, in this case,  we say that the elements $e,f$
are equivalent, and denote that by $e \simeq f.$

\begin{lemma}\label{SLARC+lemma}
 If sequence $\varepsilon$ contains exactly $m$ minuses then  $e_{\varepsilon} \simeq e_{(-^m)}.$
\end{lemma}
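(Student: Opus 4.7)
The plan is to reduce the general statement to the single-strand case $e_+\simeq 1_0$ and then propagate this equivalence through the monoidal structure introduced in Section~\ref{MS}.

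For the base case $e_+\simeq 1_0$, I would exhibit the witnessing elements explicitly. Let $\mu\in{}_1A^+_0$ be the single left sarc and $\eta\in{}_0A^+_1$ the single right sarc, i.e.\ the cap and cup generators of the monoidal category $C^+$ analogous to those in Figure~\ref{SLARCSMonoidal}. Diagrammatic concatenation gives $\mu\eta=e_+$, while $\eta\mu$ is a floating arc, which by the defining relation of $A^+$ evaluates to $1_0$. Setting $d_{e_+\to 1_0}:=\mu$ and $d_{1_0\to e_+}:=\eta$, the two identities
\[
e_+\,\mu\,1_0\,\eta\,e_+ \;=\; e_+^{\,3}\;=\;e_+, \qquad 1_0\,\eta\,e_+\,\mu\,1_0 \;=\;(\eta\mu)^2\;=\;1_0^{\,2}\;=\;1_0
\]
hold immediately, verifying the criterion and proving $e_+\simeq 1_0$.

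Next I would observe that equivalence of idempotents is preserved under the monoidal structure on $A^+\pmod$: if $e\simeq e'$ in $1_nA^+1_n$ via elements $\alpha,\beta$, and $f\in 1_kA^+1_k$ is any idempotent, then stacking gives $e\otimes f\simeq e'\otimes f$ via the diagrams $\alpha\otimes 1_f$ and $\beta\otimes 1_f$. This is essentially the statement that the bifunctor $\otimes$ sends an isomorphism of summands $A^+e\cong A^+e'$ to an isomorphism $A^+(e\otimes f)\cong A^+(e'\otimes f)$; concretely, applying $\otimes 1_f$ to each factor of the identities $e\alpha e'\beta e=e$ and $e'\beta e\alpha e'=e'$ yields the corresponding identities with $e,e',\alpha,\beta$ replaced by $e\otimes f,\,e'\otimes f,\,\alpha\otimes 1_f,\,\beta\otimes 1_f$. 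The same applies to tensoring on the left.

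To conclude, I would factor $e_\varepsilon=e_{\varepsilon_1}\otimes\cdots\otimes e_{\varepsilon_n}$ and, at each position $i$ with $\varepsilon_i=+$, use the base case together with the tensor-compatibility step to replace the factor $e_+$ by $1_0$. Because $P_0\otimes P_k\cong P_k$, the idempotent $1_0$ is the unit for $\otimes$, so each such replacement simply deletes the corresponding tensor factor; after removing all $n-m$ plus factors, only the tensor product of the $m$ remaining $e_-$'s survives, which is $e_{(-^m)}$. The main obstacle is the tensor-compatibility step: one must carefully identify the stacked expression $\alpha\otimes 1_f$ as a specific element of the non-unital algebra $A^+$ (not merely a morphism of tensor factors), verify it lies in the correct block $(e\otimes f)A^+(e'\otimes f)$, and check that the stacking operation really does translate the algebraic identities through—once this is done, the rest of the proof is iterative bookkeeping.
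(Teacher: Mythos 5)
Your argument is correct, but it takes a genuinely different route from the paper. The paper writes down, in one step, the explicit intertwiner $d_{\varepsilon\to m}$ (the diagram with $m$ through strands joining the right endpoints to the minus positions of $\varepsilon$ and short left arcs at the plus positions), takes $d_{m\to\varepsilon}$ to be its mirror image, and verifies the two composite identities directly. You instead isolate the single nontrivial one-strand equivalence $e_+\simeq 1_0$ (via the cap $\mu$ and cup $\eta$, using that the floating arc evaluates to $1$ in $A^+$), observe that idempotent equivalence is preserved by $-\otimes f$ and $f\otimes -$ because stacking is compatible with concatenation, and then strip the plus factors one at a time using that $1_0$ is the strict monoidal unit; transitivity of $\simeq$ finishes it. Unwinding your induction reproduces exactly the paper's intertwiners as tensor products of caps and cups with identity strands, so the two proofs produce the same witnesses. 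What your version buys is modularity: the only computation is a trivial one-strand check, and the propagation step is a purely formal fact about idempotents in a strict monoidal $\fieldk$-linear category. What the paper's version buys is immediacy: it hands you the intertwiner in closed form without appealing to the monoidal structure of Section~\ref{MS}, which is convenient if one wants to quote the explicit map later. One small imprecision in your write-up: in the tensor-compatibility step you say ``if $e\simeq e'$ in $1_nA^+1_n$,'' but $e$ and $e'$ generally sit in blocks of different sizes (as already in your base case, $e_+\in 1_1A^+1_1$ and $1_0\in 1_0A^+1_0$); the argument is unaffected since the witnesses $\alpha,\beta$ live in the off-diagonal blocks, but the phrasing should allow $e\in 1_nA^+1_n$ and $e'\in 1_{n'}A^+1_{n'}$.
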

\begin{proof}
  The equivalence is realized by maps corresponding to the following diagrams:
$d_{\varepsilon \ra m}$ with $n$ left and $m$ right endpoints and $m$ through arcs connecting right endpoints to
those left endpoints corresponding to the minus signs in $\varepsilon$, and the remaining points extended to short left arcs.
$b=d_{m \ra \varepsilon}$ is a reflection of $a=d_{\varepsilon \ra m}$ along the vertical axis. We have

\begin{eqnarray*}
  e_{(-^m)} \,d_{m \ra \varepsilon}\,e_{\varepsilon}\,d_{\varepsilon \ra m}\,e_{(-^m)}&=&e_{(-^m)} {\rm \,\,\,and \,}\\
  e_{\varepsilon} \,d_{\varepsilon \ra m}\,e_{(-^m)} \,d_{m \ra \varepsilon}\,e_{\varepsilon}&=&e_{\varepsilon}.
\end{eqnarray*}
An example is shown on Figure  \ref{AplusEEm}.

\begin{figure}[h]
\scalebox{1.2}{\includegraphics{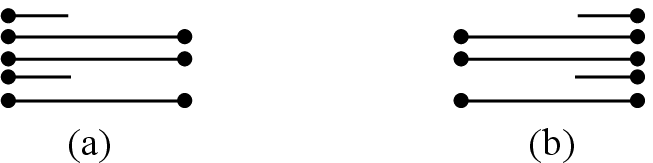}}
 \caption{Maps $d_{(-,+,-,-,+ )\ra(-^3)}$ and $d_{(-^3) \ra (-,+,-,-,+)}$.}
 \label{AplusEEm}
\end{figure}
\vspace{-0.5cm}\end{proof}

\begin{lemma}\label{SLARC+EPSEPS}
  If sequences $\varepsilon$ and $\varepsilon'$ contain $n$ and $m$ minuses, respectively,  then
 $e_{\varepsilon} \simeq e_{\varepsilon'}$ iff $m=n.$
\end{lemma}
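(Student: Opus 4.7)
The forward implication is immediate from Lemma~\ref{SLARC+lemma} and transitivity of $\simeq$: if $\varepsilon$ and $\varepsilon'$ each contain exactly $n$ minuses, then $e_\varepsilon \simeq e_{(-^n)} \simeq e_{\varepsilon'}$. For the converse the plan is to exhibit a numerical invariant of the equivalence class of an idempotent that detects the number of minuses. Since $\Hom_{A^+}(A^+e', A^+e) \cong e'A^+e$ for any idempotents $e, e' \in A^+$, equivalent idempotents produce isomorphic projectives, and hence for any fixed $e'$ one has $e'A^+e \cong e'A^+f$ as $\fieldk$-vector spaces whenever $e \simeq f$. Specializing to $e' = e_{(-^m)}$, the function $f_\varepsilon(m) \define \dim_\fieldk e_{(-^m)} A^+ e_\varepsilon$ depends only on the equivalence class of $e_\varepsilon$; by Lemma~\ref{SLARC+lemma} it suffices to compute $f_{(-^k)}(m) = \dim_\fieldk e_{(-^m)} A^+ e_{(-^k)}$ and show it equals $\delta_{m,k}$, which distinguishes the classes as $k$ varies.

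The core diagrammatic claim is: for each $1 \leq i \leq m$, let $s_i \in {}_m A^+_m$ denote the diagram with an $s$ (a pair consisting of a left sarc and a right sarc) in row $i$ and larcs in all other rows. Then $s_i \cdot d = d$ in $A^+$ whenever $d \in {}_m B^+_n$ has a left sarc at position $i$: in the concatenation $s_i \cdot d$, the right sarc of $s_i$ at $R_i$ collides with the left sarc of $d$ at $L_i'$ to produce a floating arc, which evaluates to $1$ in $A^+$; meanwhile the left sarc of $s_i$ at $L_i$ takes over the role of the former left sarc of $d$ at position $i$, and at each row $j \neq i$ the larc of $s_i$ simply prolongs whatever arc $d$ begins at $L_j'$. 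Since the $s_i$ pairwise commute, $e_{(-^m)} = \prod_{i=1}^{m}(1_m - s_i)$, and therefore $e_{(-^m)}\cdot d = 0$ whenever $d$ contains any left sarc. Applying the left-right anti-involution of $A^+$ given by reflecting diagrams about a vertical axis, the mirror statement yields $d \cdot e_{(-^k)} = 0$ whenever $d$ contains any right sarc.

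Combining these facts, $e_{(-^m)} \cdot d \cdot e_{(-^k)} = 0$ for every basis diagram $d$ containing any sarc. The only sarc-free diagrams in $B^+$ are the $1_n$'s, which lie in ${}_n B^+_n$; hence $e_{(-^m)} A^+ e_{(-^k)}$ is spanned by $e_{(-^m)} 1_m e_{(-^k)}$, which equals $e_{(-^m)}$ when $m = k$ and vanishes otherwise. This gives $f_{(-^k)}(m) = \delta_{m,k}$ and completes the only-if direction. The one step I expect to require care is the verification of $s_i \cdot d = d$ in $A^+$: this is the single point where the defining relation ``floating arc $=1$'' of $A^+$ is used essentially, and once the concatenation combinatorics around position $i$ are bookkept, the remainder of the argument is formal.
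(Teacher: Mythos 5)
Your proof is correct and follows the same reduction the paper uses: the ``if'' direction from Lemma~\ref{SLARC+lemma} and transitivity, the ``only if'' direction by showing $e_{(-^n)}\not\simeq e_{(-^m)}$ for $n\neq m$. The paper, however, simply asserts that last non-equivalence with no justification, whereas you actually prove it via the Morita-type invariant $\dim e_{(-^m)}A^+ e_{(-^k)}$. Your diagrammatic verification is sound: $e_{(-^m)}=\prod_{i=1}^m(1_m-s_i)$ is exactly the expansion of $e_-^{\otimes m}$ with $e_-=1_1-e_+$; the $s_i$ commute since they act on disjoint rows; the absorption $s_i\cdot d=d$ for $d$ with a left sarc at row $i$ is precisely where the $A^+$ relation ``floating arc $=1$'' is used; and the reflection anti-involution gives the mirror statement for right sarcs, forcing $e_{(-^m)}A^+e_{(-^k)}$ to be spanned by $e_{(-^m)}\cdot 1_n\cdot e_{(-^k)}$, which vanishes unless $m=n=k$. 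This is the same computation the paper implicitly relies on later in Proposition~\ref{SLARCA+}(1), so you have filled a genuine gap in the paper's argument in a way consistent with the authors' intent rather than by a different route.
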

\begin{proof}
 Based on Lemma \ref{SLARC+lemma} $e_{\varepsilon} \simeq e_{(-^n)}$ and  $e_{\varepsilon'} \simeq e_{(-^m)}$   and
$e_{(-^n)}$, $e_{(-^m)}$ are not equivalent unless $m=n$.
\end{proof}

\begin{cor}\label{SLARCS+ProjEq}
  Projective modules $A^+e_{\varepsilon}$ and $A^+e_{\varepsilon'}$ are isomorphic iff sequences $\varepsilon$ and
  $\varepsilon'$ contain the same number of minuses.
\end{cor}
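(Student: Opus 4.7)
The plan is to deduce the corollary directly from Lemma \ref{SLARC+EPSEPS} together with the general ring-theoretic fact, already recalled just before that lemma, that for two idempotents $e,f$ of a (possibly non-unital) ring $R$ one has $Re \cong Rf$ as left $R$-modules if and only if $e \simeq f$ in the sense used in the paper.

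To make the use of that fact explicit, I would first invoke the standard identification
$\Hom_{A^+}(A^+e_\varepsilon, A^+e_{\varepsilon'}) \cong e_\varepsilon A^+ e_{\varepsilon'}$
sending a homomorphism $\alpha$ to $\alpha(e_\varepsilon)$; this map is well defined because $e_\varepsilon$ and $e_{\varepsilon'}$ are idempotents, so every $\alpha(e_\varepsilon)$ automatically lies in $e_\varepsilon A^+ e_{\varepsilon'}$. Under this bijection, composition of homomorphisms becomes multiplication in $A^+$, so an isomorphism $\alpha$ corresponds to an element $a = e_\varepsilon a e_{\varepsilon'}$ admitting $b = e_{\varepsilon'} b e_\varepsilon$ with $ab = e_\varepsilon$ and $ba = e_{\varepsilon'}$. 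Expanding the idempotent sandwiches, this is exactly the condition $e_\varepsilon a e_{\varepsilon'} b e_\varepsilon = e_\varepsilon$ and $e_{\varepsilon'} b e_\varepsilon a e_{\varepsilon'} = e_{\varepsilon'}$ that defines $e_\varepsilon \simeq e_{\varepsilon'}$ in the paper. Hence isomorphism of the two projective modules is equivalent to equivalence of the idempotents.

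Lemma \ref{SLARC+EPSEPS} then provides the combinatorial characterization: $e_\varepsilon \simeq e_{\varepsilon'}$ holds exactly when $\varepsilon$ and $\varepsilon'$ contain the same number of minuses. Chaining the two biconditionals yields the statement of the corollary. There is no real obstacle here; the substantive content is already carried out in Lemma \ref{SLARC+lemma}, where the explicit diagrams $d_{\varepsilon \to m}$ and $d_{m \to \varepsilon}$ reduce every $e_\varepsilon$ to the reference idempotent $e_{(-^m)}$, and in Lemma \ref{SLARC+EPSEPS}, which rules out equivalence when the numbers of minuses differ. All that remains in writing the corollary is one line invoking these two inputs.
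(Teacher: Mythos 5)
Your proposal is correct and takes the same route the paper intends: the paper gives no explicit proof of the corollary because it is an immediate consequence of Lemma~\ref{SLARC+EPSEPS} together with the ring-theoretic equivalence between isomorphism of $Re$, $Rf$ and equivalence of idempotents $e \simeq f$ stated just before it. Your unpacking of the $\Hom$-space identification and the correspondence between inverse pairs of homomorphisms and the witnessing elements $a,b$ is a harmless elaboration, not a different approach.
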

%\begin{proof}
%Based on Lemma \ref{SLARC+lemma} and Lemma \ref{SLARC+EPSEPS}, every diagram $(-^n)$ corresponds to
%an indecomposable projective module denoted $P_{(-^n)}=A^+e_{(-^n)}.$
%\end{proof}

To a sequence $(-^n)$ we assign an indecomposable projective $A^+$ module $P_{(-^n)}=A^+e_{(-^n)}.$

\begin{prop}\label{SLARCA+}
  Projective modules $P_{(-^n)}$ are simple objects satisfying the following properties:
\begin{enumerate}
\item $ \Hom (P_{(-^m)}, P_{(-^n)}) = \left\{
                                       \begin{array}{ll}
                                         \fieldk, & \hbox{n=m;} \\
                                         0, & \hbox{{\rm else.}}
                                       \end{array}
                                     \right.$

\item $ P_n \cong \oplusop{|\varepsilon|=n}P_{\varepsilon} \cong \oplusoop{m=0}{n} \sbinom{n}{m}P_{(-^m)}.$
%\item $P_{(-^m)} \cong M_n \cong L_n$ as $A^+$ modules.
   \end{enumerate}
\end{prop}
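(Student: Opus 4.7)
The plan is to dispatch part (2) first from the idempotent decomposition already at hand, then compute the Hom space for part (1) via the key identity $x\cdot e_-^{(i)}=0$ whenever $x$ has a right sarc at position $i$, and finally read off the simple--object claim from the $m=n$ specialization of (1).

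For (2), the identity $1_n=\sum_{|\varepsilon|=n}e_\varepsilon$ expresses $1_n$ as a sum of mutually orthogonal idempotents (recorded just before the proposition), so
$P_n=A^+1_n=\oplusop{|\varepsilon|=n}A^+e_\varepsilon=\oplusop{|\varepsilon|=n}P_\varepsilon$. By Corollary~\ref{SLARCS+ProjEq}, $P_\varepsilon\cong P_{(-^m)}$ whenever $\varepsilon$ has exactly $m$ minuses, and since there are $\sbinom{n}{m}$ such $\varepsilon$, we obtain the stated decomposition.

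For (1), I use the standard identification $\Hom_{A^+}(A^+e_{(-^m)},A^+e_{(-^n)})\cong e_{(-^m)}A^+e_{(-^n)}$ via $f\mapsto f(e_{(-^m)})$, and compute $e_{(-^m)}xe_{(-^n)}$ on each basis diagram $x\in{}_mA^+_n$. The key observation is: if $x$ has a right sarc at position $i$, then $x\cdot e_+^{(i)}=x$ in $A^+$, since the right sarc of $x$ and the left sarc of $e_+^{(i)}$ glue to form a floating arc contributing a factor of $1$, while the right sarc of $e_+^{(i)}$ simply replaces the sarc at position $i$. Hence $x\cdot e_-^{(i)}=x\cdot(1_n-e_+^{(i)})=0$. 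Because the factors $e_-^{(1)},\ldots,e_-^{(n)}$ are supported on disjoint strands, they commute, and we may factor $e_{(-^n)}=\prod_j e_-^{(j)}=e_-^{(i)}\cdot\prod_{j\neq i}e_-^{(j)}$, so $x\cdot e_{(-^n)}=0$. A mirror argument shows $e_{(-^m)}\cdot x=0$ whenever $x$ has any left sarc. The surviving basis diagrams must have no sarcs at all, which by the planar monotone matching condition forces $m=n$ and $x=1_n$; then $e_{(-^n)}\cdot 1_n\cdot e_{(-^n)}=e_{(-^n)}^2=e_{(-^n)}$. Since $e_{(-^n)}=\sum_{S\subseteq\{1,\ldots,n\}}(-1)^{|S|}(e_+)_S$ is a nonzero alternating sum over $2^n$ distinct basis diagrams of ${}_nA^+_n$, we conclude $e_{(-^m)}A^+e_{(-^n)}=\delta_{m,n}\,\fieldk\,e_{(-^n)}$, establishing (1).

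The simple--object claim is now immediate from the $m=n$ case: $\End(P_{(-^n)})=\fieldk$ makes $P_{(-^n)}$ indecomposable in the Krull--Schmidt category $A^+\pmod$, and the off-diagonal vanishing in (1) ensures pairwise non-isomorphism. I expect the main obstacle to be the local geometric identity $x\cdot e_+^{(i)}=x$: one must carefully check that gluing $e_+^{(i)}$ onto a diagram $x$ with a right sarc at position $i$ reproduces the same isotopy class of planar diagram in $A^+$, with the floating arc contributing the factor $1$ and the new right sarc of $e_+^{(i)}$ smoothly identified with the old right sarc at position $i$. Once this local identity and its left-side mirror are in place, the rest of the computation is formal and both (1) and (2) drop out.
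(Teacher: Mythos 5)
Your part (2) is word-for-word the paper's argument: decompose $1_n$ into the mutually orthogonal idempotents $e_\varepsilon$, identify each $P_\varepsilon$ with $P_{(-^m)}$ via Corollary~\ref{SLARCS+ProjEq}, and count. Nothing to add there.

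For part (1) you do something genuinely more substantive than the paper. The paper's proof is a one-line citation of Corollary~\ref{SLARCS+ProjEq} together with the identification $\Hom(P_{(-^m)},P_{(-^n)})\cong e_{(-^m)}A^+e_{(-^n)}$, but that corollary only says the projectives are isomorphic iff the minus-counts agree; it does not by itself compute the \emph{dimension} of the Hom space, and indeed the chain of lemmas leading to it contains the unproved assertion that $e_{(-^n)}$ and $e_{(-^m)}$ are inequivalent for $m\neq n$. Your direct computation fills exactly this gap: the local identity $x\cdot e_+^{(i)}=x$ (for $x$ with a right sarc at position $i$, using the floating-arc evaluation to $1$) gives $x\cdot e_-^{(i)}=0$, hence $x\cdot e_{(-^n)}=0$; the left-side mirror then kills every basis diagram with any sarc, leaving only $x=1_n$ when $m=n$, and the nonvanishing of $e_{(-^n)}$ (visible from the expansion with leading term $1_n$) pins down $e_{(-^m)}A^+e_{(-^n)}=\delta_{m,n}\fieldk e_{(-^n)}$. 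This both proves (1) and retroactively supplies the missing step in Lemma~\ref{SLARC+EPSEPS}. Your reading of "simple object" as forced by $\End=\fieldk$ plus off-diagonal vanishing in the Krull--Schmidt category $A^+\pmod$ is also the right way to make that claim precise. In short: same framework as the paper, but you actually prove the Hom computation rather than deferring it, which is the more complete argument.
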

\begin{proof}\hspace{-0.5cm}
  \begin{enumerate}
    \item Follows from Proposition \ref{SLARCS+ProjEq} since
$$\Hom (P_{(-^m)}, P_{(-^n)})=\Hom (A^+e_{(-^m)}, A^+e_{(-^n)})=e_{(-^m)}A^+e_{(-^n)}.$$
    \item $ P_n=A^+1_n=\oplusop{|\varepsilon|=n}A^+e_{\varepsilon}=\oplusop{|\varepsilon|=n}P_{\varepsilon}.$ Each $P_{\varepsilon}$
is equivalent to $P_{(-^m)}$ and there are $\sbinom{n}{m}$ sequences $\varepsilon$ of length $n$ with exactly $m$ minuses.
     \end{enumerate}
\vspace{-0.5cm}\end{proof}

We see that the category $A^- \pmod$ of projective $A^-$-modules is semisimple. Idempotented ring $A^-$ is therefore semisimple and Morita equivalent to
idempotented ring $\fieldk \oplus \fieldk \oplus \ldots \oplus \fieldk $ which is a countable sum of copies of the field $\fieldk.$
Let $K_0(A^{+})$ denote the Grothendieck ring of a monoidal category of finitely-generated $A^{+}$ projective modules.
As before, $[P_0]=1$ and $[P_1]=x$, $[P_n]=x^n.$ Based on the decomposition of the projective modules in Proposition \ref{SLARCA+}(2)
we conclude that $[P_{(-^n)}]=(x-1)^n.$
%%%%%%%%%%%%%%%%%%%%%%%%%%%%%%%%%%%%%%%%%%%%%%%%%%%%%%%%%%%%%%%%%%%%%%%%%%%%%%%%%%%%%%%%%%%%%%%%%%%%%%%%%%

%%%%%%%%%%%%%%%%%%%%%%
%%
%%               REFERENCES
%%
%%%%%%%%%%%%%%%%%%%%%%


\begin{thebibliography}{99}
\bibitem{AsSiSk} I.~Assem, D.~Simson, and A.~Skowronski. \emph{Elements of the representation
theory of associative algebras}. Vol. 1, volume 65 of London Mathematical
Society Student Texts. Cambridge University Press, Cambridge, 2006.
%Techniques of representation theory.

\bibitem{Be} D.J.~Benson. \emph{Representations and cohomology I}, volume 30 of Cambridge Studies in Advanced Mathematics.
Cambridge University Press, Cambridge, 1991.
%Basic representation theory of finite groups and associative algebras.

\bibitem{BGG} J.~Bernstein, I.~Gelfand and S.~Gelfand,
Category of g-modules, Functional Anal. Appl. 10 (1976) 87-92.

\bibitem{GM} S.~Gelfand, Yu.~Manin,\emph{ Methods of homological algebra}, Springer, 1996.

\bibitem{KS} M.~Khovanov, R.~Sazdanovi\'c, { \emph Categorification of orthogonal polynomials}, in preparation.

\bibitem{Mi} D.~ Mili\v{c}i\'{c}, \emph{Lectures on Dervied Categories},
\\ e-print:\ {\tt http://www.math.utah.edu/${\,}_{\widetilde{}}$milicic/Eprints/dercat.pdf }

\bibitem{Sa} R.~Sazdanovi\'c, \emph{Categorification of Knot and Graph Polynomials and the Polynomial Ring}, GWU
Electronic dissertation published by ProQuest, 2010 {\tt http://surveyor.gelman.gwu.edu/}.

\bibitem{We} C.~Weibel, \emph{An introduction to homological algebra}, Cambridge Univ. Press, 1994.

\end{thebibliography}
\end{document}